\theoremstyle{definition}
\newtheorem{defi}{Definition}[section]
\newtheorem{rem}[defi]{Remark}
\newtheorem{nota}[defi]{Notation}
\theoremstyle{plain}
\newtheorem{thm}[defi]{Theorem}
\newtheorem{cor}[defi]{Corollary}
\newtheorem{lem}[defi]{Lemma}
\newtheorem{prop}[defi]{Proposition}
\author{Marco Antei \and Michel Emsalem}
\title{Galois Closure of Essentially Finite Morphisms}
\begin{document}
\maketitle

\textbf{Abstract.} Let $X$ be a reduced connected $k$-scheme pointed at a rational point $x \in X(k)$. By using tannakian techniques we construct the Galois closure of an essentially finite $k$-morphism $f:Y\to X$ satisfying the condition $H^0(Y,\mathcal{O}_Y)=k$; this Galois closure is a torsor $p:\hat{X}_Y\to X$ dominating $f$ by an $X$-morphism $\lambda:\hat{X}_Y\to Y$ and universal for this property. Moreover we show that $\lambda:\hat{X}_Y\to Y$ is a torsor under some finite group scheme we describe. Furthermore we prove that the direct image of an essentially finite  vector bundle over $Y$ is still an essentially finite vector bundle over $X$. We develop for torsors and essentially finite morphisms a Galois correspondence similar to the usual one. As an application we show that for any pointed torsor $f:Y \to X$ under a finite group scheme satisfying the condition $H^0(Y,\mathcal{O}_Y)=k$, $Y$ has a fundamental group scheme $\pi _1 (Y,y)$ fitting in a short exact sequence with $\pi _1 (X,x)$. 
\medskip
\\\indent \textbf{Mathematics Subject Classification}: Primary: 14E20, 14L15. Secondary: 14G32, 12F10, 11G99.\\\indent
\textbf{Key words}: torsors, Galois theory, tannakian categories, fundamental group scheme.

\tableofcontents
\bigskip

\section{Introduction}

Let $k$ be a field and $X$ a proper reduced $k$-scheme satisfying the condition $H^0 (X,\mathcal{O}_X)=k$ (then it is in particular  connected).\footnote{In Nori's papers $k$ is a perfect field. The only point where this hypothesis is used is to insure that the ring of endomorphisms $H^0 (X,\mathcal{O}_X)$ of the unit object $\mathcal{O}_X$ of the tannakian category $EF(X)$ is $k$. We do not assume here that $k$ is perfect, but we make the hypothesis $H^0 (X,\mathcal{O}_X)=k$} The Nori fundamental group scheme was defined as the Galois tannakian group scheme of the category of {\it essentially finite} vector bundles on $X$ \cite{Nor2},  Ch. I, \S 2.3. We define the notion of essentially finite morphisms: these are finite faithfully flat morphims $ f: Y \to X$ such that $f_\ast (\mathcal{O}_Y)$ is essentially finite. For instance torsors under finite group schemes, or more generally towers of torsors are essentially finite morphisms. On the other hand if $f:Y\to X$ is a finite \'etale morphism then it is  essentially finite; when $ch(k)=0$ the converse is also true: more precisely essentially finite morphism $f:Y\to X$ such that $H^0(Y,\mathcal{O}_Y)=k$ are finite \'etale morphisms (Remark \ref{RemEtalEssential}).

The aim of this article is to prove the existence of the Galois closure of an essentially finite morphism $f:Y\to X$. It is a torsor under a finite group scheme which dominates $f$ and satisfies an obvious universal property:

\begin{thm}\label{th1}  (see theorem \ref{universel}) Assume that $\theta: X \to Spec (k)$ is a proper reduced locally noetherian $k$-scheme over a field $k$ such that $H^0 (X,\mathcal{O}_X)=k$, endowed with a rational point $x: Spec (k) \to X$. Let $f: Y \to X$ be an essentially finite morphism such that  $H^0 (X, f_\ast (\mathcal{O}_Y))=k$ given with a rational point $y $ in the fiber $Y_x$ of $f$ at $x$. 
\begin{enumerate}
\item Under these conditions there exists a pointed torsor $p : \hat Y \to X$ under some $k$-group $G$ and a unique pointed morphism $\lambda: \hat Y \to Y$ satisfying $ f \circ \lambda = p$ ; moreover $\lambda$ is faithfully flat. The couple $( \hat Y, \lambda )$ is unique up to unique isomorphism of pointed torsors.

\item The Galois closure  $\hat Y$ satisfies the following universal property: for any quotient pointed torsor $T$ under a finite $k$-group $H$  and any pointed faithfully flat morphism $\mu: T \to Y$, there exists a unique morphism of pointed torsors $(u, \varphi )$ where $\varphi: H \to G$ and $u: T\to \hat Y$ making the following diagram commutative 

$$\xymatrix{T\ar@/_/[ddr]_{g} \ar@/^/[drr]^{\mu} \ar@{-->}[dr]^{u} & &  \\  & \hat Y \ar@{->>}[d]_{p}
\ar[r]^{\lambda}& Y\ar@{->>}[dl]^{f}\\ & X &   }$$
\end{enumerate}
\end{thm}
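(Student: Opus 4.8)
The plan is to pass through Nori's Tannakian dictionary and reduce everything to the behaviour of the fibre of $f$ at $x$, regarded as a representation of $\pi_1(X,x)$. Write $\pi:=\pi_1(X,x)$ for the Nori fundamental group scheme, so that the fibre functor at $x$ gives an equivalence $EF(X)\simeq \mathrm{Rep}_k(\pi)$. Since $f$ is essentially finite, $\mathcal{A}:=f_\ast(\mathcal{O}_Y)$ is a commutative algebra object of $EF(X)$; under the equivalence it corresponds to a finite-dimensional commutative $k$-algebra $A=\mathcal{A}\otimes_{\mathcal{O}_X}k(x)=H^0(Y_x,\mathcal{O}_{Y_x})$ carrying an action of $\pi$ by $k$-algebra automorphisms. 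Equivalently, setting $Z:=Y_x=\spec(A)$, we obtain a finite pointed $k$-scheme $Z$ (the marked point being $y\in Z(k)$) together with a homomorphism $\rho:\pi\to \underline{\Aut}(Z/k)$. The hypothesis $H^0(X,\mathcal{A})=k$ reads $A^\pi=k$, which I would translate into the statement that $Z$ consists of a single $\pi$-orbit through $y$.

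First I would produce the group $G$ and the torsor. Although $\underline{\Aut}(Z/k)$ need not be finite (for instance $\underline{\Aut}(k[t]/t^2)=\mathbb{G}_m$), the schematic image $G$ of $\rho$ is: because $\pi$ is a projective limit of finite group schemes, its coordinate Hopf algebra is a filtered colimit of finite-dimensional Hopf algebras, and the finitely generated sub-Hopf-algebra $\mathcal{O}(G)\hookrightarrow\mathcal{O}(\pi)$ therefore lands in a finite-dimensional piece, so $G$ is finite and $\pi\twoheadrightarrow G$ is faithfully flat. By Nori's correspondence between faithfully flat finite quotients of $\pi$ and connected pointed torsors, $\pi\twoheadrightarrow G$ yields a pointed torsor $p:\hat Y\to X$ under $G$ with $H^0(\hat Y,\mathcal{O}_{\hat Y})=k$.

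Next I would build $\lambda$ and reconstruct $Y$. By construction $G$ acts faithfully on $Z$, so I can form the contracted product $\hat Y\times^{G}Z\to X$; its sheaf of functions corresponds, under the equivalence, precisely to the $\pi$-algebra $A$ (the $\pi$-action factoring through $\pi\to G\to\underline{\Aut}(Z)$), whence a canonical $X$-isomorphism $\hat Y\times^{G}Z\cong Y$ matching the two descriptions of $f_\ast\mathcal{O}_Y$. Using the marking I define $\lambda:\hat Y\to \hat Y\times^{G}Z\cong Y$ by $\xi\mapsto[\xi,y]$; this is an $X$-morphism with $f\circ\lambda=p$ sending the base point of $\hat Y$ to $y$. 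Faithful flatness of $\lambda$ follows from transitivity: fppf-locally on $X$ the torsor trivialises and $\lambda$ becomes the orbit morphism $G\to Z$, $g\mapsto\rho(g)\cdot y$, which is faithfully flat because $Z$ is a single orbit. Uniqueness of a pointed $X$-morphism $\hat Y\to Y$ I would obtain by noting that such a morphism amounts to a $G$-equivariant morphism $\hat Y\to Z$, which is rigidified by the requirement that it send the base point to $y$. For the universal property I start from a pointed $H$-torsor $g:T\to X$ with a pointed faithfully flat $\mu:T\to Y$; passing to fibres at $x$, $\mu$ forces $\rho$ to factor as $\pi\to H\to\underline{\Aut}(Z)$, and comparing schematic images gives a faithfully flat $\varphi:H\to G$ with $\pi\to H\xrightarrow{\varphi}G$ equal to $\pi\twoheadrightarrow G$. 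Functoriality of Nori's dictionary then turns $\varphi$ into a morphism of pointed torsors $u:T\to\hat Y$ over $X$, and the two factorisations of $\rho$ force $\lambda\circ u=\mu$; uniqueness of $(u,\varphi)$, and with it the uniqueness up to unique isomorphism of part (1), comes from faithfulness of the correspondence together with the pointings.

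I expect the main obstacle to be the reconstruction step: proving cleanly that $\hat Y\times^{G}Z$ is $X$-isomorphic to $Y$ compatibly with all the marked points, and deducing faithful flatness of $\lambda$. This is exactly where the hypothesis $H^0(X,f_\ast\mathcal{O}_Y)=k$ must be used, and the delicate translation "$A^\pi=k\Leftrightarrow Z$ is a single $\pi$-orbit through $y$" is the technical heart underpinning the whole argument, since it is what controls the interplay between the algebra structure on $A$ and the $\pi$-action.
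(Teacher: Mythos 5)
Your strategy is the same as the paper's proof of theorem \ref{universel}: pass through the Tannakian dictionary to the fiber $Y_x=\spec(A)$ with its $\pi_1(X,x)$-action. Your schematic image $G$ of $\rho$ is exactly the paper's $\pi_1(X,U,x)$ for $U=\{f_\ast(\mathcal{O}_Y)\}$ (that category is generated by $f_\ast(\mathcal{O}_Y)$, so its Tannakian group is the image of $\pi_1(X,x)$ in $GL(A)$); your torsor $\hat Y$ is the universal torsor $\hat X_U$; your contracted-product reconstruction $\hat Y\times^{G}Y_x\simeq Y$ is what the paper extracts from Deligne's equivalence between finite $G$-schemes and finite $X$-schemes whose pushforward lies in $EF(X,U)$; and both arguments reduce faithful flatness of $\lambda$ to faithful flatness of the orbit map $\rho:G\to Y_x$, $g\mapsto g\cdot y$. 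Your treatment of the universal property, factoring the action through $H$ and comparing images, is a fiberwise version of the paper's inclusion $EF(X,\{f_\ast(\mathcal{O}_Y)\})\hookrightarrow EF(X,\{g_\ast(\mathcal{O}_T)\})$ inducing $H\twoheadrightarrow G$.

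The genuine gap is that you never prove the claim you yourself single out as the technical heart: that $H^0(X,f_\ast(\mathcal{O}_Y))=k$ forces $Y_x$ to be ``a single orbit'' and that the orbit map $G\to Y_x$ is then faithfully flat. This is precisely the paper's lemma \ref{action-transitive}, and it is not a formality, because $G$ is a finite group scheme rather than an abstract group: in positive characteristic $G$, $G_y$ and $Y_x$ may be non-reduced, so neither transitivity nor flatness can be checked on points, and an epimorphism of fppf sheaves between finite schemes need not be flat (e.g.\ $\spec(k[\epsilon]/\epsilon^2)\sqcup\spec(k)\to\spec(k[\epsilon]/\epsilon^2)$). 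The paper's proof needs three inputs from \cite{DemGab}: (i) the fppf quotient $Y_x/G$ is represented by $\spec(A^G)$, so that lemma \ref{invariants} --- which is also what justifies your unproved translation $H^0(X,\mathcal{A})=A^{\pi}$ --- turns the hypothesis into $Y_x/G\simeq\spec(k)$, whence $G\times Y_x\to Y_x\times Y_x$ is an fppf epimorphism; (ii) base-changing along $y$, the orbit map is an fppf epimorphism and factors through the monomorphism $G/G_y\to Y_x$, which is therefore an isomorphism; (iii) $G\to G/G_y$ is faithfully flat, which is the homogeneous-space theorem (\cite{DemGab}, III, \S 3, 2.5), not a consequence of surjectivity. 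Your phrase ``faithfully flat because $Z$ is a single orbit'' compresses all of (i)--(iii) into an assertion; until those steps are supplied, part (1) --- and with it everything downstream --- is incomplete.
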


Moreover given a torsor $T \to X$ under a finite group scheme $G$, there is a Galois correspondence between subgroups of $G$ and intermediate essentially finite morphisms, correspondence similar to the usual Galois correspondence. Then we prove that the direct image of an essentially finite vector bundle over $Y$ is still an essentially finite vector bundle over $X$: 

\begin{thm}\label{th2} (see theorem \ref{teoDEDICATOaWEIL}) Let $X$  be a reduced, proper $k$-scheme such that $H^0 (X,\mathcal{O}_X)=k$, endowed with a rational point $x \in X(k)$. Let $f: Y \to X$ be an essentially finite morphism of $k$-schemes. Assume either $Y$ to be a reduced, proper $k$-scheme with $H^0 (Y,\mathcal{O}_Y)=k$, endowed with a rational point $y \in Y_x(k)$, or $f: Y \to X$ to be a torsor under a finite group scheme. Let $\mathcal{F}$ be a vector bundle over $Y$ trivialized by a torsor over $Y$ under a finite $k$-group scheme. Then the sheaf
$f_{\ast}(\mathcal{F})$ is an essentially finite vector bundle over $X$.
\end{thm}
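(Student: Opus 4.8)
The plan is to exhibit a single finite torsor over $X$ that simultaneously resolves the cover $f$ and trivializes $\mathcal{F}$, and then to read off the essential finiteness of $f_\ast(\mathcal{F})$ from the $\Gamma$-representation attached to it. First I would check that $f_\ast(\mathcal{F})$ is a vector bundle at all: since an essentially finite morphism is finite and faithfully flat, $f_\ast \mathcal{O}_Y$ is locally free over $X$, and a finite flat pushforward of a locally free sheaf is locally free, so $f_\ast(\mathcal{F})$ is a vector bundle on $X$. This step is routine.

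Next comes the geometric core. Let $\pi: T \to Y$ be the torsor under a finite $k$-group scheme that trivializes $\mathcal{F}$, so that $\pi^\ast \mathcal{F}$ is free. The composite $T \to Y \to X$ is a tower of torsors over the essentially finite $f$, hence essentially finite; after replacing $T$ by the connected component through a chosen point in the fibre over $y$ (so that $H^0(T,\mathcal{O}_T)=k$) I would apply Theorem \ref{th1} to form its Galois closure $p:\hat T \to X$, a pointed torsor under a finite $k$-group $\Gamma$ with $H^0(\hat T,\mathcal{O}_{\hat T})=k$, dominating $T\to X$ by a morphism $\beta:\hat T \to T$. By the Galois correspondence between subgroups of $\Gamma$ and the intermediate essentially finite morphisms of $p$, the intermediate cover $Y$ is a quotient $Y=\hat T/K$ for a subgroup $K\le\Gamma$, with $\alpha:\hat T\to Y$ the associated $K$-torsor; by construction $\alpha=\pi\circ\beta$, so $\alpha^\ast\mathcal{F}=\beta^\ast\pi^\ast\mathcal{F}$ is free. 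Equivalently, $\mathcal{F}$ lies in the subcategory of bundles on $Y$ trivialized by the $K$-torsor $\alpha$, which the Tannakian dictionary identifies with $\mathrm{Rep}_k(K)$, say $\mathcal{F}\leftrightarrow V$.

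The decisive step is then to compute $p^\ast f_\ast(\mathcal{F})$. Forming the fibre product $P:=\hat T\times_X Y$ and using that $p$ is flat, flat base change gives $p^\ast f_\ast(\mathcal{F})\cong \mathrm{pr}_{1\ast}\,\mathrm{pr}_2^\ast\mathcal{F}$, where $\mathrm{pr}_1:P\to\hat T$. The torsor structure trivializes the pullback of $\hat T$ to itself, $\hat T\times_X\hat T\cong\hat T\times_k\Gamma$, whence $P\cong\hat T\times_k(\Gamma/K)$ compatibly with $\mathrm{pr}_1$; under the equivalence between bundles on $X$ trivialized by the $\Gamma$-torsor $\hat T$ and $\mathrm{Rep}_k(\Gamma)$, the direct image along $\hat T/K\to\hat T/\Gamma$ corresponds to induction, so that $f_\ast(\mathcal{F})$ is the bundle attached to $\mathrm{Ind}_K^\Gamma V$. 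Consequently $p^\ast f_\ast(\mathcal{F})\cong \mathcal{O}_{\hat T}\otimes_k\mathrm{Ind}_K^\Gamma V$ is free, i.e. $f_\ast(\mathcal{F})$ is trivialized by the torsor $p$ under the finite group scheme $\Gamma$, which is precisely the assertion that $f_\ast(\mathcal{F})$ is essentially finite on $X$.

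I expect the main obstacle to be the identification of $f_\ast$ with the induction functor $\mathrm{Ind}_K^\Gamma$ — that is, checking that the direct image commutes with the Tannakian dictionary and, in particular, that $f_\ast(\mathcal{F})$ genuinely lands in the subcategory trivialized by $\hat T$. Making this precise when $\Gamma$ is non-reduced or non-split forces one to argue with the structure sheaf $k[\Gamma/K]$ and flat descent along $\hat T\times_X\hat T\to P$ rather than with fibres over the points of $\Gamma/K$. The two secondary technical points — arranging $H^0(T,\mathcal{O}_T)=k$ so that Theorem \ref{th1} applies to the tower $T\to X$, and invoking the Galois correspondence to write $Y=\hat T/K$ — are exactly where the hypothesis splits into its two cases: either $H^0(Y,\mathcal{O}_Y)=k$ makes $Y$ an honest intermediate pointed quotient, or $f$ is itself a torsor and the quotient description is automatic.
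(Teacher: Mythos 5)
Your opening move is circular, and this is the fatal gap. You form the composite $T \to Y \to X$ of the trivializing torsor $\pi$ with $f$ and declare it ``a tower of torsors over the essentially finite $f$, hence essentially finite'', so that Theorem \ref{th1} can be applied to produce its Galois closure $\hat T$. But essential finiteness of that composite means precisely that $(f\circ\pi)_\ast(\mathcal{O}_T)=f_\ast(\pi_\ast\mathcal{O}_T)$ is essentially finite, and $\pi_\ast(\mathcal{O}_T)$ is exactly a vector bundle on $Y$ trivialized by a torsor under a finite group scheme (namely by $\pi$ itself, since $T\times_Y T\simeq T\times G$). So the statement you start from is an instance of the very theorem you are trying to prove; in the paper it appears only \emph{after} Theorem \ref{teoDEDICATOaWEIL}, as Corollaries \ref{compos-ess-fin-morph} and \ref{tour-de-torseurs}. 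The paper breaks this circle with an external input: it dominates $f$ by a torsor $T_1\to X$ using Theorem \ref{universel} (or takes $T_1=Y$ when $f$ is itself a torsor), pulls the trivializing torsor back to get a tower of genuine torsors $T_2=T_1\times_Y Z\to T_1\to X$, and then invokes Garuti's theorem (\cite{Gar}, Theorem 1), whose construction of a torsor $T\to X$ dominating a tower of torsors is independent of any essential-finiteness statement. Without Garuti's result (or some equivalent independent input), your $\hat T$ simply does not exist yet.

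Two further points, one secondary flaw and one genuine difference of mechanism. First, your fix ``replace $T$ by the connected component through a chosen point in the fibre over $y$ so that $H^0(T,\mathcal{O}_T)=k$'' does not work in positive characteristic: a connected torsor under an infinitesimal group scheme can have $H^0(T,\mathcal{O}_T)\neq k$ (e.g.\ the trivial $\alpha_p$-torsor $X\times\alpha_p\to X$ is connected but has $H^0=k[x]/(x^p)$), and the fibre over $y$ need not contain a rational point over an imperfect field; so the hypotheses of Theorem \ref{th1} cannot be arranged this way. Second, even granting a dominating torsor, your ``decisive step'' (identifying $f_\ast$ with induction so that $f_\ast(\mathcal{F})$ is trivialized by $\hat T\to X$) needs the Galois correspondence of Corollary \ref{correspondance}, which requires a \emph{pointed quotient} torsor---properties Garuti's torsor does not come with. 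The paper avoids all of this by never showing $f_\ast(\mathcal{F})$ is trivialized by a torsor: it shows (i) $f_\ast(\mathcal{F})$ embeds as a subbundle of the \emph{finite} bundle $(u_\ast\mathcal{O}_T)^{\oplus r}$, via $\mathcal{O}_Y\hookrightarrow (g\circ h)_\ast\mathcal{O}_T$ pushed forward by the affine flat $f$, and (ii) $f_\ast(\mathcal{F})$ has restricted degree $0$, by the curve-by-curve degree computations of Lemmas \ref{lemme413}--\ref{lemmaSperanza}; Nori's criterion (\cite{Nor}, Proposition 3.7) then gives essential finiteness. If you want to salvage your outline, insert Garuti's theorem where you wrote ``hence essentially finite'', and then either carry out the induction argument with full care about pointing and quotient-torsor issues, or fall back on the paper's subbundle-plus-degree mechanism, which requires neither.
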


This allows us to give a positive answer to a natural question: composite of two essentially finite morphisms is essentially finite. 
The point of view chosen here is to use the tannakian techniques introduced by Nori in his definition of the fundamental group scheme. Another approach discussed also by Nori is to see the fundamental group scheme as the projective limit of the finite group schemes occurring in torsors over $X$. This last point of view has been generalized by Gasbarri \cite{Gas}, \S 2 allowing him to define the fundamental group schemes of relative schemes over Dedekind schemes. 

We show that if $f : Y \to X$ is a torsor under a finite group scheme satisfying the condition $H^0 ( Y , \mathcal {O} _Y)=k$ then $Y$ has a fundamental group scheme which fits in a short exact sequence of fundamental group schemes (theorem \ref{suite-exacte}). Using Gasbarri's approach M. Garuti addressed the question of the Galois closure of a tower of torsors \cite{Gar}, Theorem 1. His Galois closure is not, however, the smallest possible (Remark \ref{remGarou}), but his construction is essential in our proof of theorem \ref{teoDEDICATOaWEIL}. Moreover he obtained a short  exact sequence similar to ours in the more general setting of relative schemes over Dedekind schemes (\cite{Gar}, Theorem 3).

We limited ourselves to the case of a proper base scheme $X$. Note that in the case of an affine curve $X$, N. Borne defines in \cite{Bor} the tame fundamental group scheme of $X$ using the tannakian category of parabolic vector bundle. Independently H.Esnault and P.H.Hai define the fundamental group scheme of $X$ in the characteristic $0$ case, using the tannakian category of vector bundles with connexion \cite{EsnHai1}. In the context of these theories, the construction of the Galois closure make sense and could be performed following the same lines.

\section{The fundamental group scheme.}
\label{sez:Intro}
In this section we briefly recall the tannakian
construction of the Nori fundamental group scheme (cf. \cite{Nor} and
\cite{Nor2}). The situation is the following:

\begin{nota}\label{notazCAP4}
From now on $k$ will denote a field.  Let $\theta: X \to Spec (k)$ be a  reduced and proper  $k$-scheme such that $H^0( X,\mathcal{O}_X)=k$ endowed with a section
$x:Spec(k)\rightarrow X$. 

We will consider in this article {\it torsors} over $X$ under finite $k$-group schemes $G$. They are finite, faithfully flat and $G$-invariant morphisms $p: Y \to X$, locally trivial for the fpqc topology, where $Y$ is a $k$-scheme endowed with a right action of $G$ and $X$ with the trivial action. 
\end{nota}

\medskip
Nori defines in \cite{Nor2},  Ch. I, \S 3 the fundamental group scheme of $X$ as the Galois group of a neutral tannakian category generated by the so called ``finite vector bundles''.

\begin{defi}\label{defFibratoFinito} A vector bundle $\mathcal{F}$ over $X$ is
said to be \textit{finite}  if there exist two polynomials $
p(x),q(x) \in \mathbb{Z}[x]$, where $p(x)\neq q(x)$ have
   nonnegative coefficients
    such that $$p(\mathcal{F})\simeq
   q(\mathcal{F}),$$ where the sum is the direct sum of vector
   bundles over $X$ and the product is the tensor product over $\mathcal{O}_X$.
\end{defi}

It is a fact that  finite vector bundles are {\it semi-stable} in the sense of Nori (\cite{Nor2}, Ch. I, \S 2.3) and Nori considers the subcategory generated by the finite vector bundles in the tannakian category ${{SS}} (X)$ of semi-stable vector bundles on $X$. More precisely:

\begin{defi}\label{defCATEGORIA.GENERATA}
Let $S$ be a set of objects of ${{SS}} (X)$. We denote ${{SS}} (X)(S)$ the full abelian subcategory of ${{SS}} (X)$ generated by $S$.
Since finite vector bundles over $X$ are semi-stable (\cite{Nor}
Corollary 3.5), if $F$ is the set of finite vector bundles over $X$, we denote $EF(X)={{SS}} (X)(F)$. Objects of $EF(X)$ are called essentially finite vector bundles over $X$.

More generally take any subset $U\subseteq Ob(EF(X))$ and let $U^{\vee}$ be the
set of duals of objects of $U$; let $U_1:=U\cup U^{\vee}$ and $S$
be the set of all possible tensor products of elements in $U_1$. We
denote $SS(X)(S)$ by $EF(X,U)$ the full abelian subcategory of
$SS(X)$ \textit{generated} by $S$. Let
\begin{displaymath}
\begin{array}{cc}
  i_U: & EF(X,U)\rightarrow \mathcal{Q}coh(X)
\end{array}
\end{displaymath}
be the inclusion functor and let
\begin{displaymath}
\begin{array}{cc}
  x^{\ast}: & \mathcal{Q}coh(X)\rightarrow k\text{-mod}, \\
   & \mathcal{F}\mapsto \mathcal{F}_{x}.
\end{array}
\end{displaymath}
be the fiber functor associated to the section $x\in X(k)$.

In what follows, we will use the neutral fiber functor $$\omega _U = x^\ast \circ i_U: EF(X,U) \to k\text {-mod}.$$ When $EF(X,U) = EF(X)$, we will use the notation $\omega = \omega _U$ and $ i_X= i_U$. 
\end{defi}
\medskip
Nori proves the following result (\cite{Nor2}, Ch. I, Proposition 3.7)

\medskip
\begin{thm}\label{proposizioneESSENZFINITTANNAKIAN}
For any $U\in Ob(EF(X))$, the category
$(EF(X,U),\otimes,\omega_U,\mathcal{O}_X)$ is a neutral tannakian category.\end{thm}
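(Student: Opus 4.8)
The plan is to realize $(EF(X,U), \otimes, \omega_U, \mathcal{O}_X)$ as a tensor subcategory of the ambient tannakian category $SS(X)$, so that the axioms of a neutral tannakian category (in the sense of Deligne--Milne) are inherited rather than reproved ab initio. The input I would take for granted is that $SS(X)$, equipped with $\omega = x^\ast \circ i_X$, is an abelian, $k$-linear, rigid symmetric monoidal category with unit $\mathcal{O}_X$, and that $\omega$ is an exact, faithful, $k$-linear tensor functor into finite-dimensional $k$-vector spaces; the essential point behind this, due to Nori (\cite{Nor}), is that the semistable bundles of slope $0$ form an abelian category whose monomorphisms and epimorphisms are strict, i.e. subbundle inclusions and bundle quotients, so that every short exact sequence in $SS(X)$ is a short exact sequence of vector bundles. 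Granting this, the problem reduces to checking that $EF(X,U)$ is a full subcategory of $SS(X)$ stable under the tensor operations and that $\omega_U$ is nothing but the restriction of $\omega$.

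That $EF(X,U)$ is abelian and $k$-linear holds by its very construction as a full abelian subcategory of $SS(X)$; the content lies in the tensor and fiber-functor structure, which I would verify axiom by axiom. For the tensor product, note that by construction the generating set $S$ consists of all tensor products of elements of $U_1 = U \cup U^\vee$, hence is stable under $\otimes$; since tensoring with a fixed vector bundle is exact (vector bundles being flat) and $\otimes$ distributes over $\oplus$, the tensor product of two objects of $EF(X,U)$ — each a subquotient of a finite direct sum of objects of $S$ — is again such a subquotient, so $EF(X,U)$ is closed under $\otimes$. Rigidity is obtained the same way: $U_1$ is already stable under passage to duals, so $S$ is stable under duals up to reordering the factors, and the dual of a subquotient is a subquotient of the dual; thus every object of $EF(X,U)$ admits a dual inside $EF(X,U)$, with evaluation and coevaluation inherited from $SS(X)$. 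The unit $\mathcal{O}_X$ lies in $EF(X,U)$, being the image of the coevaluation $\mathcal{O}_X \to V \otimes V^\vee$ for any $V \in U$, hence a subobject of an element of $S$. Finally $\mathrm{End}_{EF(X,U)}(\mathcal{O}_X) = H^0(X, \mathcal{O}_X) = k$ by hypothesis, which supplies the condition on the unit object.

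It remains to recognize $\omega_U = x^\ast \circ i_U$ as a fiber functor. Because $i_U$ is the restriction to a full $\otimes$- and dual-stable abelian subcategory of the inclusion $SS(X) \hookrightarrow \mathcal{Q}coh(X)$, the functor $\omega_U$ is simply the restriction of $\omega$ and therefore inherits $k$-linearity, the monoidal constraints $\omega_U(\mathcal{F} \otimes \mathcal{G}) \simeq \omega_U(\mathcal{F}) \otimes_k \omega_U(\mathcal{G})$, exactness, and faithfulness; it lands in finite-dimensional vector spaces since each object is a vector bundle of finite rank. I expect the only genuinely delicate ingredient to be the exactness and faithfulness of the fiber functor, namely the fact that a short exact sequence in $EF(X,U)$ is an exact sequence of vector bundles that stays exact after restriction to $x$ (and that $\mathcal{F}_x \neq 0$ for $\mathcal{F} \neq 0$, which yields faithfulness once exactness is known); this is precisely where the semistability of the objects enters, and it is the step I would import from Nori's work rather than argue by hand. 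Everything else is a formal consequence of $EF(X,U)$ being a full tensor- and dual-stable abelian subcategory of the neutral tannakian category $SS(X)$.
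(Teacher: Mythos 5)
There is a genuine gap, and it sits exactly at the point you propose to ``take for granted'': the assertion that $SS(X)$ is a rigid symmetric monoidal category with exact faithful fiber functor. What Nori actually proves about $SS(X)$ is only the abelian part of this package (kernels and cokernels exist and monomorphisms/epimorphisms are strict); he does not prove, and one cannot assume, that $SS(X)$ is stable under $\otimes$. In positive characteristic this is not an available input: on a curve in characteristic $p>0$ there exist semistable bundles of degree zero whose tensor powers are unstable. Indeed, starting from Gieseker's examples of a semistable degree-zero bundle $E$ whose Frobenius pullback $F^\ast E$ is unstable, the $\mathcal{O}$-linear $p$-th power embedding $F^\ast E \hookrightarrow S^p E$ shows that $E^{\otimes p}$ cannot be semistable (otherwise its degree-zero quotient bundle $S^p E$, and then its degree-zero subbundle $F^\ast E$, would be semistable). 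For Nori's notion of semistability (pullback to every curve), tensor-closure over a perfect field amounts to strong semistability together with the Ramanan--Ramanathan theorem, which postdates Nori and is far from formal; over an imperfect field (which this paper explicitly allows, cf. its footnote on the hypothesis $H^0(X,\mathcal{O}_X)=k$) it is simply not known. So the ambient category of your argument is not (known to be) tannakian precisely in the situation the theorem is about; and note that the same gap appears earlier than you acknowledge, since already the claim that the generating set $S$ consists of objects of $SS(X)$ --- i.e.\ that tensor products of essentially finite bundles and their duals are semistable --- is not definitional but is part of what must be proved.

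The paper itself offers no argument (it quotes \cite{Nor2}, Ch.\ I, Proposition 3.7), and Nori's proof is structured exactly to avoid your premise. Its two extra inputs are: (i) \emph{finite} bundles are semistable and are visibly closed under $\otimes$ and duality (this follows from the definition via polynomial identities, or via finiteness of the set of indecomposable summands of all tensor powers), so tensor products of finite bundles lie in $SS(X)$ without knowing anything about tensor products of general semistable bundles; and (ii) a subbundle, or quotient bundle, which has degree zero on every curve and sits inside a Nori-semistable bundle is itself Nori-semistable; combined with additivity of degree under $\otimes$, this shows that if $F_i$ is a subquotient in $SS(X)$ of a direct sum of finite bundles $V_i$, then $F_1\otimes F_2$ --- a priori just a vector bundle --- is a degree-zero subquotient bundle of the Nori-semistable bundle $V_1\otimes V_2$, hence is itself Nori-semistable and again essentially finite. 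Once (i) and (ii) are in place, your subquotient bookkeeping, your treatment of duals and of the unit object, and your verification that $\omega_U$ is an exact faithful tensor functor (which does rest, as you correctly isolate, on strictness of monomorphisms and epimorphisms in $SS(X)$ and on $H^0(X,\mathcal{O}_X)=k$) all go through. In characteristic zero your shortcut is legitimate, since there $SS(X)$ is indeed tannakian; but the theorem is stated over an arbitrary field, and its entire point is the positive-characteristic case.
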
  

\medskip

\indent Let $(\mathcal{C},\otimes,\gamma,1_{\mathcal{C}})$ be any neutral tannakian category, where $\gamma $ is a neutral fiber functor, let $S$ be any $k$-scheme and
  $\alpha, \beta:\mathcal{C}\to \mathcal{Q}coh(S)$ be two fiber functors with values in the category of quasi-coherent sheaves on $S$. Consider
  the functor 
$$\begin{array}{rrcl}
\underline{Isom}_S^{\otimes}(\alpha,\beta):& S\text{-}\mathcal{S}ch & \to & \mathcal{S}et\\
 & T & \mapsto & Isom^{\otimes}_T(\varphi^{\ast}\circ
\alpha,\varphi^{\ast}\circ \beta)
\end{array}$$ where $\varphi:T\to S$ is a morphism of schemes, $S\text{-}\mathcal{S}ch$ is the category of relative schemes over
$S$, $\mathcal{S}et$ is the category of sets and
$Isom^{\otimes}(\cdot,\cdot)$ stands for (iso)morphisms commuting
with the tensor product.

 General Tannaka duality insures that $\underline{Aut}^{\otimes}_k(\gamma):=\underline{Isom}^{\otimes}_k(\gamma,\gamma)$
is represented by an affine group scheme and that the category $\mathcal C$ is equivalent to the category of representations of this group scheme (cf. for instance \cite{DelMil}, Theorem 2.11). We will refer to $G:=\underline{Aut}^{\otimes}_k(\gamma)$ as the
tannakian Galois group scheme of the tannakian category
$\mathcal{C}$ attached to the neutral fiber functor $\gamma $.

Let $p:S\to Spec(k)$ be a scheme and $Fib_{S}(\mathcal{C})$ the
  category of fiber functors $\mathcal{C}\to \mathcal{Q}coh(S)$.
  Let $G\text{-}\text{Tors}_S$ be the category of (right) $G$-torsors over
  $S$. We have the following fundamental result (\cite{DelMil}, Theorem 3.2 and \cite{Del} \S 3):
\medskip

\begin{thm}\label{teorDELIGNEMILNE} The functor \begin{displaymath}
\begin{array}{rcl}
  Fib_{S}(\mathcal{C}) & \rightarrow & G\text{-}\text{Tors}_S \\
  \eta & \mapsto & \underline{Isom}^{\otimes}_S(p^{\ast}\circ \gamma,\eta)
\end{array}
\end{displaymath}
   is an equivalence of gerbes. \end{thm}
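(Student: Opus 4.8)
The plan is to deduce this from the main Tannaka duality equivalence already recalled above, namely the special case $S=\operatorname{Spec}(k)$, $\eta=\gamma$, which gives $\mathcal{C}\xrightarrow{\sim}\operatorname{Rep}_k(G)$, and then to promote it to a statement over an arbitrary base by faithfully flat descent. First I would replace $(\mathcal{C},\gamma)$ by $(\operatorname{Rep}_k(G),\omega_0)$, where $\omega_0$ is the forgetful fiber functor; since the functor in the statement depends on $(\mathcal{C},\gamma)$ only through this equivalence, no generality is lost. Both source and target are then to be viewed as fibered categories over $S\text{-}\mathcal{S}ch$: to $\varphi\colon T\to S$ one assigns the category $Fib_T(\mathcal{C})$ of fiber functors $\mathcal{C}\to\mathcal{Q}coh(T)$ on one side and the category $G\text{-}\text{Tors}_T$ of $G$-torsors over $T$ on the other, the transition maps being pullback. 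The asserted functor is manifestly cartesian, because $\underline{Isom}^{\otimes}$ commutes with base change $T'\to T$; so it will suffice to prove that it is an equivalence of categories for each fixed $T$, that both fibered categories are stacks, and that they are locally non-empty and locally connected.

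Second, I would construct the candidate quasi-inverse. To a $G$-torsor $q\colon P\to S$ I attach the functor $\eta_P\colon V\mapsto P\times^{G}V$, the contracted product (associated bundle) formed using the given action of $G$ on $V\in\operatorname{Rep}_k(G)$. The point to verify is that $\eta_P$ is a genuine fiber functor: it is $k$-linear, monoidal and compatible with duals by construction, and it is exact and faithful because $q$ is faithfully flat, so that forming $P\times^{G}(-)$ may be checked after base change along $q$, where $P$ becomes trivial and $\eta_P$ becomes $q^{\ast}$ composed with the pullback of $\omega_0$, which is visibly exact and faithful. The same descent shows that $\eta_P$ takes values in locally free sheaves of finite rank.

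Third, and this is the technical heart, I would show that for any fiber functor $\eta$ over $S$ the presheaf $\underline{Isom}^{\otimes}_S(p^{\ast}\gamma,\eta)$ is representable by a scheme faithfully flat over $S$ on which $G_S=G\times_kS$ acts simply transitively, i.e. is a $G$-torsor. Representability and affineness I would obtain by choosing a faithful representation $G\hookrightarrow GL(V_0)$, equivalently a tensor generator of $\mathcal{C}$: then $\underline{Isom}^{\otimes}$ is cut out inside the affine $S$-scheme $\underline{Isom}((p^{\ast}\gamma)(V_0),\eta(V_0))$ of linear isomorphisms of the two vector bundles by the closed tensor-compatibility conditions imposed by the $G$-equivariant maps among tensor constructions on $V_0$. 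The decisive step is local triviality: that fpqc-locally on $S$ one has $\eta\cong p^{\ast}\gamma$, so that $\underline{Isom}^{\otimes}$ acquires sections locally and is therefore faithfully flat, with the simply transitive $G_S$-action given by precomposition with the automorphisms of $p^{\ast}\gamma$, which are exactly the $S$-points of $G_S$. This local triviality is precisely the substance of Tannaka duality over a base, and it is the step I expect to be the main obstacle; it is established by a descent argument reducing a fiber functor to its restriction to a tensor generator together with the equivariant relations that reconstruct it.

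Finally I would check that the two constructions are mutually inverse and that both fibered categories are stacks. For the inverse, a $T$-point of $\underline{Isom}^{\otimes}_S(p^{\ast}\gamma,\eta_P)$ is a tensor isomorphism $\varphi^{\ast}p^{\ast}\gamma\xrightarrow{\sim}\varphi^{\ast}\eta_P$, and the assignment $g\mapsto(v\mapsto[g,v])$ for $g\in P(T)$ is a $G$-equivariant bijection onto this set, whence $\underline{Isom}^{\otimes}_S(p^{\ast}\gamma,\eta_P)\cong P$ as $G_S$-torsors; conversely, for $P=\underline{Isom}^{\otimes}_S(p^{\ast}\gamma,\eta)$ the natural map $P\times^{G}\gamma(V)\to\eta(V)$, $(g,v)\mapsto g(v)$, is an isomorphism because it is so fpqc-locally, where $\eta\cong p^{\ast}\gamma$, and it descends. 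The stack property on the torsor side is the classical fpqc descent for torsors; on the fiber-functor side it follows from fpqc descent for quasi-coherent sheaves applied object by object, together with descent of the tensor isomorphisms. Local non-emptiness of both gerbes is witnessed by $p^{\ast}\gamma$ respectively the trivial torsor, and local connectedness is exactly the local triviality proved in the third step. Hence the cartesian functor, being an equivalence on each fiber between stacks that are gerbes, is an equivalence of gerbes.
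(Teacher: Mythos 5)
First, note what you are measuring yourself against: the paper offers no proof of theorem \ref{teorDELIGNEMILNE} at all; it is quoted as a foundational black box with a citation to \cite{DelMil}, Theorem 3.2 and \cite{Del}, \S 3. So you are in effect reproving the cited result, and your outline does faithfully reconstruct the architecture of the Deligne--Milne argument: reduction to $(\operatorname{Rep}_k(G), {\rm forget})$ via the duality already recalled in the paper, the quasi-inverse $P \mapsto P\times^G(-)$ with exactness and faithfulness checked after the faithfully flat base change $P\to S$, the reduction of the equivalence-of-gerbes statement to (i) stack axioms, (ii) fiberwise equivalence, (iii) representability and faithful flatness of $\underline{Isom}^{\otimes}_S(p^{\ast}\gamma,\eta)$, and the verification that the two constructions are mutually inverse by passing to an fpqc cover trivializing the torsor. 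All of these surrounding steps are sound.

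The genuine gap sits exactly at the step you yourself flag as ``the main obstacle,'' and flagging it is not the same as crossing it. Your stated justification for local triviality --- ``a descent argument reducing a fiber functor to its restriction to a tensor generator together with the equivariant relations that reconstruct it'' --- is circular: knowing that $\eta$ is reconstructed from $\eta(V_0)$ together with the equivariant relations does not produce an fpqc cover of $S$ over which this datum becomes isomorphic to the standard one; that assertion \emph{is} the theorem. The actual argument runs in the opposite direction from yours: one proves faithful flatness of the Isom scheme first, and then local triviality follows from the tautological section of that scheme over itself. Concretely, $\underline{Isom}^{\otimes}_S(p^{\ast}\gamma,\eta)\simeq \operatorname{Spec}_S(B)$ where $B=\tilde\eta(kG)$, with $kG$ the regular representation regarded as an ind-object and $\tilde\eta$ the Ind-extension of $\eta$; then $B$ is flat because $kG$ is a filtered union of finite-dimensional subrepresentations, so that $B$ is a filtered colimit of locally free sheaves; and $B$ has nonzero fibre at every point $s\in S$ because $\mathbf{1}\hookrightarrow kG$, $\tilde\eta$ is exact, and $\eta\otimes\kappa(s)$ remains faithful (the rank of $\eta(V)$ is everywhere positive, since the evaluation $V\otimes V^{\vee}\twoheadrightarrow \mathbf{1}$ stays surjective under the exact functor $\eta$ and $\mathbf{1}$ is simple). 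None of this is supplied, or even correctly sketched, by your appeal to descent. A second, lesser gap: your representability argument presupposes a tensor generator $V_0$, i.e. $G$ of finite type over $k$; but the groups this theorem is applied to in the paper, such as $\pi_1(X,x)$, are pro-finite and not algebraic, so you must exhaust $\mathcal{C}$ by tensor-generated subcategories and pass to the filtered limit of the corresponding Isom schemes, checking that affineness and faithful flatness survive the limit (they do, but it needs saying).
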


\medskip

\begin{defi}Let $k$ be a field, $\theta : X \to Spec (k)$ a reduced and proper $k$-scheme such that $H^0 (X,\mathcal{O}_X)=k$ and let $x\in X(k)$. Let $EF(X)$ be the
category of essentially finite vector bundles over $X$ and
 $\omega:=x^{\ast}\circ i_X:EF(X)\to k$-$mod$ the fiber functor defined above. Then $\pi_1(X,x):=\underline{Aut}^{\otimes}_k(\omega)=\underline{Isom}^{\otimes}_k(\omega,\omega)$ is the fundamental group
  scheme of $X$ in $x$. \end{defi}

The natural inclusion fiber functor $i_U:EF(X,U)\to
\mathcal{Q}coh(X)$ gives rise to a torsor over $X$ given by
$$\hat X_U:=\underline{Isom}^{\otimes}_X(\theta^{\ast}\circ
\omega_U,i_U)$$ under the (right) action of the affine $k$-group
scheme $\pi_1(X,U,x):=\underline{Aut}^{\otimes}_k(\omega_U)$.
Moreover the fiber at $x$ is equipped with a rational point $\hat x _U$. Indeed we have the following canonical isomorphisms (cf. lemma \ref{2}) $$x^\ast (\hat X_U)\simeq x^\ast \underline{Isom}^{\otimes}_X(\theta^{\ast}\circ
\omega_U,i_U)\simeq$$$$\simeq  \underline{Isom}^{\otimes}_k(x^\ast \circ  \theta^{\ast}\circ
\omega_U,x^\ast \circ i_U)\simeq \underline{Isom}^{\otimes}_k(\omega_U,\omega _U)=\pi _1 (X,U,x)$$

The image of the neutral element of $\pi _1 (X,U,x)$ in $x^\ast (\hat X_U)$ is $\hat x _U$ by definition. A torsor over $X$ under a finite $k$-group scheme endowed with a $k$-rational point in the fiber of $X$ will be referred to as a pointed torsor.

\begin{defi}\label{torseur-universel} We
will call $(\hat X_U, \hat x_U)$ the universal $\pi_1(X,U,x)$-torsor over
$X$ (associated to the tannakian category $EF(X,U)$). When $EF(X,U)=EF(X)$, the corresponding universal torsor will be denoted $(\hat X, \hat x)$.
\end{defi}

\medskip
When $|U|$ is finite, the fundamental group scheme $\pi_1(X,U,x)$ is finite. Thus $\pi_1(X,x)$ is the projective limit of finite group schemes, and the universal torsor $\hat X$ is the projective limit of torsors under finite group schemes.

\begin{defi}\label{defTRIPLETS}
We will consider the category whose objects are pointed torsors under finite group schemes, i.e. triples $(Y,G,y)$ where $f:Y\rightarrow X$ is a $G$-torsor under some finite $k$-group scheme $G$ endowed with a section $y:Spec(k)\rightarrow Y$ such that $f(y)=x$.

A morphism $\varphi:(Y_1,G_1,y_1)\rightarrow (Y_2,G_2,y_2)$
between two pointed torsors is the datum of two morphisms
$\alpha:Y_1\rightarrow Y_2$ and $\beta: G_1\rightarrow G_2$ where
$\beta$ is a group scheme morphism, $\alpha(y_1)=y_2$ and such that the
following diagram

\begin{displaymath}
\begin{array}{ccc}
  G_1\times Y_1 & \rightarrow & Y_1 \\
  \downarrow & \circlearrowleft & \downarrow \\
  G_2\times Y_2 & \rightarrow & Y_2
\end{array}
\end{displaymath}
commutes (horizontal arrows being the actions of the concerned group schemes).\\
\end{defi}

\begin{defi}\label{defTRIPLERIDOTTE} A pointed torsor $(Y,G,y)$, as in definition \ref{defTRIPLETS},  is said to be a
quotient torsor if for any pointed torsor $(Y',G',y')$ and any morphism
$\varphi=(\alpha,\beta):(Y',G',y')\rightarrow (Y,G,y)$, $\beta$
is a faithfully flat morphism.\footnote{In \cite{Nor2}, Ch. II. Nori says ``reduced'' instead of ``quotient''. Because of the possible confusion with the usual notion of reduced scheme, we changed the terminology.}\\
\end{defi}

\begin{rem}\label{osservaTRIPLERIDOTTE} A pointed torsor $(Y,G,y)$ is a quotient torsor if and only if $G$ is a quotient of the fundamental group scheme
$\pi_1(X,x)$, that is the canonical morphism $\pi_1(X,x)\to G$ is faithfully flat (cf. for instance \cite{Antei}, Corollary 2.8).\\
\end{rem}

\section{Galois closure of essentially finite morphisms.}

\label{sez:tre}

We keep the notation introduced in Notation \ref{notazCAP4}.

\medskip
\subsection{Statement of the results}

\begin{defi}\label{essentiellement-fini} Let $f:Y\to X$ be a finite and flat morphism (then faithfully flat since finite and flat imply that $f$ is closed and open and as $X$ is connected, $f$ is surjective). The morphism $f$ is said to be essentially finite if and only if $f_{\ast}(\mathcal{O}_Y)$ is an essentially finite vector bundle (cf. def. \ref{defCATEGORIA.GENERATA}). 
\end{defi}
\noindent Let $f:Y\to X$ be an essentially finite morphism. Let us consider the full tannakian subcategory of $EF(X)$ generated by
$f_{\ast}(\mathcal{O}_Y)$ that is
$$EF(X,\{f_{\ast}(\mathcal{O}_Y)\})$$ provided with the fiber
functor $\omega_U:x^{\ast}\circ i_U
:EF(X,U)\to k$-mod, where
$U:=\{f_{\ast}(\mathcal{O}_Y)\}$ and
$i_U:EF(X,U)\hookrightarrow
\mathcal{Q}coh(X)$ is as in section \ref{sez:Intro}. From this data we get the fundamental group scheme $G= \pi _1 (X,U,x)$ and the universal torsor $(\hat X_U , \hat x_U)$. Denote by $p: \hat X_U \to X$ the structural morphism of the universal torsor. The main result of this section is the following theorem.

\begin{thm}\label{universel}
Assume that $\theta: X \to Spec (k)$ is a proper reduced locally noetherian $k$-scheme over a field $k$ such that $H^0 (X,\mathcal{O}_X)=k$, endowed with a rational point $x: Spec (k) \to X$. Let $f: Y \to X$ be an essentially finite morphism such that  $H^0 (X, f_\ast (\mathcal{O}_Y))=k$ given with a rational point $y $ in the fiber $Y_x$ of $f$ at $x$. 
\begin{enumerate}
\item Under these conditions there exists a unique  morphism $\lambda: \hat X_U \to Y$ sending $\hat x _U $ to $y$ and satisfying $ f \circ \lambda = p$. Moreover this morphism is faithfully flat.

\item Moreover $\lambda: (\hat X_U, \hat x _U)\to (Y,y)$ has the structure of a pointed right torsor under the stabiliser $G_y$ of $y$ under the action of $G= \pi _1(X,U,x)$ on the fiber $Y_x$.

\item Finally the universal torsor $(\hat X_U, \hat x_U)$ satisfies the following universal property: for any quotient pointed torsor $(T, H, t)$,  and any faithfully flat morphism $\mu: T \to Y$ such that $\mu (t)=y$, there exists a unique morphism of pointed torsors $(u, \varphi )$ where $\varphi: H \to G= \pi _1 (X,U,x)$ and $u: T\to \hat X_U$ sending $t$ to $\hat x_U$ making the following diagram commutative 

$$\xymatrix{T\ar@/_/[ddr]_{g} \ar@/^/[drr]^{\mu} \ar@{-->}[dr]^{u} & &  \\  & \hat X_U \ar@{->>}[d]_{p}
\ar[r]^{\lambda}& Y\ar@{->>}[dl]^{f}\\ & X &   }$$
\end{enumerate}
\end{thm}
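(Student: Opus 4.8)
The plan is to read the whole statement off the tannakian equivalence $\omega_U\colon EF(X,U)\xrightarrow{\ \sim\ }\mathrm{Rep}(G)$ provided by Theorems \ref{proposizioneESSENZFINITTANNAKIAN} and \ref{teorDELIGNEMILNE}, where $G=\pi_1(X,U,x)$ is \emph{finite} because $|U|=1$, and to exploit that $f$ finite means $Y=\underline{\mathrm{Spec}}_X f_\ast(\mathcal{O}_Y)$ is the relative spectrum of the algebra object $f_\ast(\mathcal{O}_Y)\in EF(X,U)$. Write $A:=\omega_U(f_\ast\mathcal{O}_Y)=(f_\ast\mathcal{O}_Y)_x=\mathcal{O}(Y_x)$, a commutative $G$-algebra, and let $\varepsilon_y\colon A\to k$ be the augmentation attached to $y\in Y_x(k)$.

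For the existence and uniqueness of $\lambda$ (item 1) I would observe that an $X$-morphism $\hat X_U\to Y$ is the same as an $\mathcal{O}_X$-algebra map $f_\ast\mathcal{O}_Y\to p_\ast\mathcal{O}_{\hat X_U}$. Here $p_\ast\mathcal{O}_{\hat X_U}$ is the bundle associated through the torsor $\hat X_U$ to the right regular representation $\mathcal{O}(G)$, an honest object of $EF(X,U)$ since $G$ is finite. As $EF(X,U)$ is a full subcategory of $\mathcal{Q}coh(X)$ and $\omega_U$ is an equivalence onto $\mathrm{Rep}(G)$, these algebra maps correspond bijectively to $G$-equivariant $k$-algebra maps $A\to\mathcal{O}(G)$; and because $\mathcal{O}(G)$ is coinduced from the trivial subgroup, $\Hom_{G\text{-alg}}(A,\mathcal{O}(G))\cong\Hom_{k\text{-alg}}(A,k)=Y_x(k)$, the bijection sending $\phi$ to $\mathrm{ev}_{\hat x_U}\circ\phi$ (recall $(\hat X_U)_x\cong G$ with $\hat x_U\leftrightarrow e$). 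Hence there is exactly one pointed $\lambda$ with $f\circ\lambda=p$, namely the one attached to $\varepsilon_y$, which settles existence and uniqueness.

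For the torsor structure (item 2) I would note that $G$, hence $G_y$, acts freely on $\hat X_U$, form $Y':=\hat X_U/G_y$ so that $\hat X_U\to Y'$ is a $G_y$-torsor, and check that $\lambda$ is $G_y$-invariant (on fibres it is the orbit map $G\to Y_x$, $g\mapsto g\cdot y$, which kills $G_y$). Thus $\lambda$ factors as $\hat X_U\to Y'\xrightarrow{\iota}Y$ with $\iota$ a closed immersion over $X$ whose fibre at $x$ is the orbit inclusion $G/G_y\hookrightarrow Y_x$. Everything then reduces to the step I expect to be the \emph{main obstacle}: showing $\iota$ is an isomorphism, i.e. $G$ acts transitively on $Y_x$, equivalently that $\lambda^\sharp\colon f_\ast\mathcal{O}_Y\to p_\ast\mathcal{O}_{\hat X_U}$ is injective, equivalently that the largest $G$-stable ideal $J=\bigcap_{g}g\cdot\mathfrak{m}_y\subseteq A$ contained in the maximal ideal of $y$ vanishes. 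This is exactly where the two standing hypotheses must enter: that $f_\ast\mathcal{O}_Y$ generates $EF(X,U)$ (so $G$ is realised as genuine monodromy acting faithfully on $A$, not as an abstract group — without this, infinitesimal examples with nonreduced fibres show $\iota$ can fail to be flat), and that $H^0(Y,\mathcal{O}_Y)=k$, i.e. $A^G=k$. My plan is to use that $J$ is $G$-stable with $J^G\subseteq A^G\cap\mathfrak{m}_y=0$ and that $A/J=\mathcal{O}(G/G_y)$ is again a faithful generator, to rule out $J\neq0$. Once $J=0$, $\iota$ is an isomorphism, so $\lambda$ is a $G_y$-torsor and in particular faithfully flat, finishing items 1 and 2 at once.

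For the universal property (item 3), let $(T,H,t)$ be a quotient pointed torsor with structure map $g\colon T\to X$ and let $\mu\colon T\to Y$ be pointed and faithfully flat. Faithful flatness yields an injection of $\mathcal{O}_X$-algebras $f_\ast\mathcal{O}_Y\hookrightarrow g_\ast\mathcal{O}_T$. By Remark \ref{osservaTRIPLERIDOTTE}, $T$ corresponds to a tannakian subcategory $\mathcal{C}_T\subseteq EF(X)$ with Galois group $H$ (a quotient of $\pi_1(X,x)$), and $g_\ast\mathcal{O}_T$ is an ind-object of $\mathcal{C}_T$; since $\mathcal{C}_T$ is closed under subobjects and $f_\ast\mathcal{O}_Y$ has finite rank, $f_\ast\mathcal{O}_Y\in\mathcal{C}_T$, whence $EF(X,U)=\langle f_\ast\mathcal{O}_Y\rangle\subseteq\mathcal{C}_T$. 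This inclusion of neutral tannakian categories induces the faithfully flat $\varphi\colon H\to G$ and, by the functoriality of the universal torsor (Theorem \ref{teorDELIGNEMILNE}, Definition \ref{torseur-universel}), a pointed morphism of torsors $u\colon T\to\hat X_U$ over $\varphi$ with $u(t)=\hat x_U$. Finally $\lambda\circ u$ and $\mu$ are two pointed $X$-morphisms $T\to Y$ sending $t$ to $y$, so the uniqueness of item 1, applied to the torsor $T$ (whose category now contains $f_\ast\mathcal{O}_Y$), forces $\lambda\circ u=\mu$; the same uniqueness makes the pair $(u,\varphi)$ itself unique.
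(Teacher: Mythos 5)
Your items 1 and 3 are essentially sound. Item 3 is in substance the paper's own proof: from the faithfully flat $\mu$ one gets $f_\ast(\mathcal{O}_Y)\hookrightarrow g_\ast(\mathcal{O}_T)$, hence a fully faithful inclusion $EF(X,\{f_\ast(\mathcal{O}_Y)\})\hookrightarrow EF(X,\{g_\ast(\mathcal{O}_T)\})$, a faithfully flat morphism $\varphi: H\to G$ of Galois groups, and the morphism $u$ via the identification of $T$ with the universal torsor of $EF(X,\{g_\ast(\mathcal{O}_T)\})$ (Corollary \ref{cas-des-torseurs}). Item 1, via the identification of $X$-morphisms $\hat X_U\to Y$ with $G$-equivariant algebra maps $A\to \mathcal{O}(G)$ (using Proposition \ref{representation-reguliere} and fullness of $EF(X,U)$ in $\mathcal{Q}coh(X)$) and the coinduction adjunction identifying these with $k$-algebra maps $A\to k$, is a legitimate variant of the paper's route, which instead invokes Deligne's equivalence between finite $G$-schemes and finite $X$-schemes with pushforward in $EF(X,U)$ and takes $\lambda$ to be the image of the orbit map $\rho: G\to Y_x$.

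The genuine gap is in item 2, at exactly the step you yourself flag as the main obstacle; since the faithful flatness asserted in item 1 also rests on it, the theorem is not proved. You reduce everything to transitivity of the $G$-action on $Y_x$, i.e.\ to the vanishing of the largest $G$-stable ideal $J\subseteq \mathfrak{m}_y$, and propose to deduce $J=0$ from $J^G\subseteq A^G\cap\mathfrak{m}_y=0$ together with ``faithful generation''. This implication is unjustified and cannot be repaired in that form: for a finite group scheme a nonzero module can have zero invariants (any nontrivial character of $\mu_p$ in characteristic $p$), so $J^G=0$ says nothing about $J$; and faithfulness of the action of $G$ on $A$ never implies transitivity (a constant group acting faithfully on a disjoint union of two orbits). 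What is actually needed --- and what the paper supplies as Lemma \ref{action-transitive} --- is the theory of quotients by finite group schemes: by Lemma \ref{invariants} (full faithfulness of the equivalence $F: Rep_k(G)\to EF(X,U)$) one has $A^G\simeq H^0(X,f_\ast(\mathcal{O}_Y))=k$, so the quotient $Y_x/G=Spec(A^G)$ is $Spec(k)$; then Demazure--Gabriel (\cite{DemGab}, III, \S 2, Th.\ 3.2 b) gives that $G\times Y_x\to Y_x\times_{Y_x/G}Y_x=Y_x\times Y_x$ is surjective for the fppf topology, hence the orbit map $\rho: G\to Y_x$ is an fppf epimorphism; combined with the monomorphism $G/G_y\to Y_x$ (\cite{DemGab}, III, \S 3, 1.6) this yields $G/G_y\simeq Y_x$ and the faithful flatness of $\rho$. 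This quotient theorem is the substantive input that your invariants computation cannot replace. Note also that, once transitivity is granted, the paper deduces faithful flatness of $\lambda$ not through the quotient $\hat X_U/G_y$ but by pulling back along $p$, where every object of $EF(X,U)$ trivializes, so that $\lambda$ becomes $1_{\hat X_U}\times\rho$, and then descending; your alternative route through $Y'=\hat X_U/G_y$ could be completed, but the assertion that $\iota: Y'\to Y$ is a closed immersion is itself unproved as written and would again require the tannakian correspondence (or the same quotient theory) to justify.
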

\medskip

In part 1 of theorem \ref{universel} the existence of $\lambda$ follows from Nori's work. The point of the theorem is that $ \lambda $ is fppf.

\begin{nota}
In the situation of theorem \ref{universel}, the group $H$ acts, via the morphism $ \varphi: H \to \pi _1 (X,U,x)$ on the fiber $Y_x$. We will denote by $H_y$ the stabilizer of $y$ in this action.
\end{nota}

\medskip

\begin{defi}\label{domination} Let $f: Y\to X$ be an essentially finite morphism endowed with a rational point $y$ in the fiber of $x$, and $g:T\to X$ a quotient torsor pointed by a rational point $t \in T(k)$. We will say that the pointed torsor dominates the morphism $f$ if there exists a faithfully flat morphism $ \lambda: T \to Y $ such that $g= f \circ \lambda $ and $\lambda (t) = y$.
\end{defi}
\medskip

\begin{cor}\label{correspondance} Let $g:T\to X$ be a quotient torsor under a finite  group scheme $H$ over $k$ pointed by a rational point $t \in T(k)$. 

\begin{enumerate}
\item The correspondence which associates to any essentially finite morphism $f: Y \to X$ pointed by $y \in Y_x(k)$ dominated by $g$, the stabilizer $H_y<H$ is a bijection between pointed essentially finite morphisms $f: Y \to X$ dominated by $g$, up to isomorphism, and closed $k$-subgroup schemes of $H$, up to conjugation. 

\item Moreover, $f: Y \to X$ is a torsor if and only if $H_y$ is normal in $H$; in this case it is a torsor under the group scheme $ H /H_y$.
\end{enumerate}
\end{cor}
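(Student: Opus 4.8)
The plan is to reduce the geometric correspondence to the subgroup theory of $H$ acting on its coset spaces, with Theorem \ref{universel} as the engine. Recall that the quotient torsor $g:T\to X$ corresponds to a faithfully flat quotient $\pi_1(X,x)\twoheadrightarrow H$, and that $T$ is itself essentially finite, so $g_\ast(\mathcal{O}_T)$ lies in $EF(X)$. I would first set up the two maps of the claimed bijection: the forward map sends a dominated pointed morphism $(Y,y)$ to the stabilizer $H_y\leq H$ of $y$ for the action of $H$ on the fibre $Y_x$ (through $\varphi:H\to G=\pi_1(X,U,x)$, as in the Notation preceding the corollary), and the backward map sends a closed $k$-subgroup scheme $K\leq H$ to the intermediate quotient $T/K$, pointed by the image of $t$.

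First I would check that the backward map is well defined. For a closed subgroup scheme $K\leq H$ the quotient $T/K$ exists as an $X$-scheme: working affine-locally on $X$, where $T=\spec(\mathcal{A})$ with $\mathcal{A}$ an $\mathcal{O}_X$-algebra carrying the $H$-action, one sets $T/K=\spec(\mathcal{A}^K)$; since $T\to X$ is an $H$-torsor it is fpqc-locally isomorphic to $X\times H$, so that $\mathcal{A}^K$ is fpqc-locally $\mathcal{O}_X\otimes_k k[H/K]$, which shows at once that $T\to T/K$ is a $K$-torsor and that $f:T/K\to X$ is finite and faithfully flat. Furthermore $f_\ast(\mathcal{O}_{T/K})=(g_\ast\mathcal{O}_T)^K$ is a sub-$\mathcal{O}_X$-algebra of the essentially finite bundle $g_\ast(\mathcal{O}_T)$, hence, once it is checked to be a saturated semistable subbundle, a subobject of an object of $EF(X)$; as $EF(X)$ is closed under subobjects, $T/K\to X$ is an essentially finite morphism, visibly dominated by $g$. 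A computation on the fibre, $(T/K)_x=T_x/K=H/K$ with the coset of the identity as marked point, shows that its stabilizer is exactly $K$, so the backward map followed by the forward map is the identity on subgroups.

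For the reverse composition, and the injectivity of the forward map, the essential input is Theorem \ref{universel}. Given an essentially finite $(Y,y)$ dominated by $g$ through $\mu:T\to Y$, I would apply Theorem \ref{universel} to $f:Y\to X$ with $U=\{f_\ast\mathcal{O}_Y\}$ (its hypotheses, in particular $H^0(X,f_\ast\mathcal{O}_Y)=k$, being verified for covers dominated by $g$): part (2) gives that the universal torsor $\lambda:\hat X_U\to Y$ is a torsor under $G_y\leq G$, and part (3), applied to the dominating quotient torsor $T$, produces $\varphi:H\to G$ and $u:T\to\hat X_U$ with $\lambda\circ u=\mu$. Transporting the $G_y$-torsor structure of $\lambda$ through the torsor morphism $u$ over $\varphi$ shows that $\mu:T\to Y$ is a torsor under $K:=H_y=\varphi^{-1}(G_y)$, hence that $Y\cong T/H_y$ canonically over $X$, identifying $Y$ with the image of $H_y$ under the backward map. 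Finally, replacing the marked point $y$ by another rational point of its $H$-orbit in $Y_x$ conjugates $H_y$, and conversely conjugate subgroups yield $X$-isomorphic covers; this turns the bijection on pointed objects into a bijection between isomorphism classes of dominated morphisms and conjugacy classes of closed subgroup schemes, proving part (1).

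For part (2), if $K=H_y$ is normal the right $H$-action on $T$ descends to $T/K$ and presents $f:T/K\to X$ as a torsor under $H/K$ by fpqc descent of the simply transitive action. Conversely, if $f$ is a torsor under a finite group scheme $H'$, the tower $T\to Y\to X$, in which $T\to Y$ is a $K$-torsor while $T\to X$ and $Y\to X$ are torsors under $H$ and $H'$, yields a short exact sequence $1\to K\to H\to H'\to 1$, so $K$ is normal and $H'\cong H/K$. The main obstacle I anticipate is precisely the reverse composition: showing that an arbitrary dominated essentially finite $Y$ is genuinely a quotient $T/H_y$ with $\mu$ an $H_y$-torsor. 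A priori $\mu$ is only finite and faithfully flat, and it is Theorem \ref{universel}, together with the existence and flatness of quotients by the possibly non-reduced and non-normal subgroup scheme $H_y$, that upgrades it to a torsor; verifying semistability and saturation of the intermediate direct images (so that $EF(X)$ may be closed under them) is the companion technical point.
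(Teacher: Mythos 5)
Your proposal is correct in substance, and its forward map (the stabilizer $H_y$ extracted from theorem \ref{universel}) is exactly the paper's; but your backward map takes a genuinely different, more geometric route. The paper never leaves the tannakian formalism: to a subgroup $H'<H$ it attaches the $\pi_1(X,x)$-scheme $H/H'$ and invokes the equivalence, already used in the proof of theorem \ref{universel} (Nori's functor $F$ combined with \cite{Del}, 7.5--7.12), between finite $\pi_1(X,x)$-schemes and finite morphisms $Z\to X$ with $g_\ast(\mathcal{O}_Z)\in EF(X)$; under this equivalence essential finiteness of the intermediate cover and its domination by $T$ are automatic, and the fact that the two maps are mutually inverse is the one-line statement that $Y_x\simeq G/G_y\simeq H/H_y$ as $\pi_1(X,x)$-schemes. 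You instead build the cover as the quotient scheme $T/K$ and prove directly that $\mu:T\to Y$ is an $H_y$-torsor, so that $Y\simeq T/H_y$; this transport of structure does work, since under $T\times_X T\simeq T\times H$ the closed subscheme $T\times_Y T$ is cut out exactly by $\varphi^{-1}(G_y)=H_y$. What you gain is explicitness; what you must pay for --- the two points the tannakian route gets for free --- is (i) existence, flatness and the torsor property of quotients by an arbitrary closed subgroup scheme $K$, which is \cite{DemGab}, III, \S 3 (the same reference the paper uses in lemma \ref{action-transitive}), and (ii) essential finiteness of $(g_\ast\mathcal{O}_T)^K$: your ``saturated semistable subbundle'' worry is best discharged by writing the invariants as the sheaf kernel of $g_\ast\mathcal{O}_T\to g_\ast\mathcal{O}_T\otimes_k k[K]$ (coaction minus the trivial map), a morphism between objects of $EF(X)$, and using that kernels of such morphisms in $SS(X)$ are again semistable of degree zero, hence lie in $EF(X)$. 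One further patch is needed in your part (2): the converse asserts a short exact sequence $1\to K\to H\to H'\to 1$, but the homomorphism $H\to H'$ is not part of the data; it is supplied by \cite{Nor2}, Ch. II, Lemma 1 (applicable because $H^0(T,\mathcal{O}_T)=k$, $T$ being a quotient torsor), which promotes the bare $X$-morphism $\mu$ to a morphism of torsors. The paper's proof of this converse is instead again tannakian: $G=\pi_1(X,U,x)$ acts faithfully on $Y_x\simeq G/G_y$, so normality of $G_y$ forces $G_y=1$, whence $Y\simeq\hat X_U$ is itself a torsor under $G\simeq H/H_y$.
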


\medskip
\begin{rem}
\label{RemEtalEssential}
In the characteristic $0$ case, essentially finite morphisms $f: Y \to X$ such that $H^0(Y,\mathcal{O}_Y)=k$ are just finite \'etale morphisms. Indeed if $f: Y \to X$ is an essentially finite morphism such that $H^0(Y,\mathcal{O}_Y)=k$ then after extension of scalars we may assume that there are points $ x \in X(k)$ and $y \in Y(k)$ such that $f(y)=x$. Theorem \ref{universel} insures the existence of a Galois closure $\hat f:\hat X_U \to X$ which is a torsor under a finite group scheme $G$. As $ch(k)=0$, $G$ is an \'etale group scheme, and then $\hat f:\hat X_U \to X$ is \'etale, which implies that $f: Y \to X$ is itself \'etale.

We will see in section \ref{sez:quattro}  other examples of essentially finite morphisms, namely towers of torsors under finite group schemes.
\end{rem}
\medskip
\begin{rem}
In \cite{Nor3}, Nori shows that the fundamental group scheme of an abelian variety is abelian. It then follows from the Galois correspondence that any essentially finite morphism $f: Y \to X$ satisfying $H^0 ( Y , \mathcal{O}_Y) =k$, where $X$ is an abelian variety defined over a field $k$, is itself a torsor under a finite abelian group scheme.
\end{rem}

Let $f:Y\to X$ be an essentially finite morphism. A natural question that arises is whether the direct image of an essentially finite vector bundle over $Y$ is still essentially finite over $X$. Theorem \ref{teoDEDICATOaWEIL} gives a positive answer in a little more general setting.
 As an application we will show that the composite morphism $f' \circ f$ of  two torsors under finite group schemes $f: Y \to X$ and $f': Y' \to Y$, which in general  is not a torsor itself,  is an essentially finite morphism; therefore we will be able to apply to this morphism the construction of theorem \ref{universel} and to construct a sharp Galois closure for towers of torsor, i.e. the smallest torsor dominating (in the sense of definition \ref{domination}) the morphism $f' \circ f$. We use a recent result of Garuti (cf. \cite{Gar}, Theorem 1) who constructs a ``Galois closure'' of towers of torsors (which is not the smallest possible torsor dominating the tower).

\begin{thm}\label{teoDEDICATOaWEIL} Let $X$  be a reduced, proper $k$-scheme such that $H^0 (X,\mathcal{O}_X)=k$, endowed with a rational point $x \in X(k)$. Let $f: Y \to X$ be an essentially finite morphism of $k$-schemes. Assume either $Y$ to be a reduced, proper $k$-scheme with $H^0 (Y,\mathcal{O}_Y)=k$, endowed with a rational point $y \in Y_x(k)$, or $f: Y \to X$ to be a torsor under a finite group scheme. Let $\mathcal{F}$ be a vector bundle over $Y$ trivialized by a torsor over $Y$ under a finite $k$-group scheme. Then the sheaf
$f_{\ast}(\mathcal{F})$ is an essentially finite vector bundle over $X$.
\end{thm}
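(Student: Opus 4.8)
\emph{Tools.} The plan rests on Nori's characterisation (underlying Definition \ref{defCATEGORIA.GENERATA}) that a vector bundle is essentially finite exactly when it is trivialised by a torsor under a finite group scheme --- which is precisely the hypothesis placed on $\mathcal F$ --- together with two soft properties of the category $EF(X)$: it is abelian and closed under kernels and subobjects formed in $\mathcal{Q}coh(X)$. In particular, if $\mathcal M\in EF(X)$ carries an action of a finite $k$-group scheme $N$ by $\mathcal O_X$-linear automorphisms, then the invariants $\mathcal M^N$ --- the kernel of the $\mathcal O_X$-linear map $\mathcal M\to\mathcal M\otimes_k k[N]$ comparing the coaction with the trivial one, a morphism of $EF(X)$ since $\mathcal M\otimes_k k[N]\cong\mathcal M^{\oplus\dim_k k[N]}$ --- again lie in $EF(X)$, as a genuine sub-vector-bundle.

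\emph{The torsor case.} I treat first the case where $f$ is a $G$-torsor. Choosing a torsor $q:Z\to Y$ under a finite group scheme $\Gamma$ with $q^*\mathcal F\cong\mathcal O_Z^{\oplus n}$, I obtain a tower of torsors $Z\to Y\to X$, and Garuti's theorem (\cite{Gar}, Theorem 1) furnishes its Galois closure: a torsor $\pi:W\to X$ under a finite group scheme $\tilde G$ dominating the tower. The induced surjection $\tilde G\to G$ exhibits $r:W\to Y$ as a torsor under $N:=\ker(\tilde G\to G)\subseteq\tilde G$, and since $r$ factors through $Z$ one gets $r^*\mathcal F\cong\mathcal O_W^{\oplus n}$. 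Thus $\mathcal F$ is the descent along $r$ of the $N$-equivariant trivial bundle $\mathcal O_W\otimes_k V$, where $V=\mathcal F_y$ is the fibre regarded as a representation $\rho$ of $N$; concretely $\mathcal F\cong(r_*(\mathcal O_W\otimes_k V))^N$. Because $f_*$ is left exact it commutes with the kernel defining the $N$-invariants, and $f_*r_*=\pi_*$, so that
$$f_*\mathcal F=\big(\pi_*(\mathcal O_W\otimes_k V)\big)^N=\big((\pi_*\mathcal O_W)\otimes_k V\big)^N.$$
Now $\pi$ is a torsor under a finite group scheme, hence an essentially finite morphism (Definition \ref{essentiellement-fini}), so $\pi_*\mathcal O_W\in EF(X)$ and therefore $(\pi_*\mathcal O_W)\otimes_k V\cong(\pi_*\mathcal O_W)^{\oplus\dim_k V}\in EF(X)$. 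The residual $N$-action, coming from $N\subseteq\tilde G$ acting on $\pi_*\mathcal O_W$ together with $\rho$ on $V$, is $\mathcal O_X$-linear, so by the Tools paragraph $f_*\mathcal F=((\pi_*\mathcal O_W)\otimes_k V)^N$ lies in $EF(X)$.

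\emph{The general case.} If instead $Y$ is proper reduced with $H^0(Y,\mathcal O_Y)=k$, then $f$ need not be a torsor and Garuti's theorem does not apply to $Z\to Y\to X$ directly; I reduce to the torsor case through the Galois closure of $f$ provided by Theorem \ref{universel}. This produces a $G$-torsor $p:\hat X_U\to X$ and a faithfully flat $\lambda:\hat X_U\to Y$, itself a torsor under the stabiliser $G_y$, with $p=f\circ\lambda$. As $\lambda$ is a $G_y$-torsor one has $\mathcal F\cong(\lambda_*\lambda^*\mathcal F)^{G_y}$ and hence $f_*\mathcal F\cong(p_*\lambda^*\mathcal F)^{G_y}$; the bundle $\lambda^*\mathcal F$ is trivialised by $Z\times_Y\hat X_U\to\hat X_U$ and $p$ is a torsor, so the torsor case yields $p_*\lambda^*\mathcal F\in EF(X)$, and taking $G_y$-invariants keeps us in $EF(X)$.

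\emph{Main obstacle.} I expect the real work to lie in two places. The first is the structural input from \cite{Gar}: that the Galois closure realises $r:W\to Y$ as a torsor under a subgroup $N\subseteq\tilde G$ compatibly with the full $\tilde G$-torsor $W\to X$, so that the $N$-action on $\pi_*\mathcal O_W$ and the identification $f_*\mathcal F=((\pi_*\mathcal O_W)\otimes_k V)^N$ are legitimate. The second is the verification, in positive characteristic, that the finite-group-scheme invariants are computed inside $EF(X)$ as an actual sub-vector-bundle --- an issue when $N$ is infinitesimal --- rather than merely as a quasi-coherent subsheaf; this is what ties the computation back to the abelian tannakian structure of $EF(X)$ recorded in the Tools paragraph.
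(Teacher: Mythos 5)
Your strategy is genuinely different from the paper's, so let me first record the difference. The paper realises $f_{\ast}(\mathcal{F})$ as a locally split subsheaf of the finite bundle $(u_{\ast}\mathcal{O}_T)^{\oplus r}$ and then verifies Nori's numerical criterion by hand: that $f_{\ast}(\mathcal{F})$ has restricted degree $0$, via the chain of curve-degree lemmas \ref{lemme413}, \ref{lemmeEspoir} and \ref{lemmaSperanza}, which occupy most of section \ref{sez:proof}. You instead exhibit $f_{\ast}(\mathcal{F})$ as a module of invariants, i.e.\ as the kernel of an $\mathcal{O}_X$-linear map between essentially finite bundles, and appeal to stability of $EF(X)$ under kernels; your reduction of the general case to the torsor case via the Galois closure of Theorem \ref{universel} is correct and parallels the paper's use of $T_1$. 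If your inputs were granted, this would be shorter and would absorb all degree computations into the general theory. But two of those inputs are not correct as stated, and they are exactly where the content lies.

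First, the Tools claim that $EF(X)$ is ``closed under kernels and subobjects formed in $\mathcal{Q}coh(X)$'' is false for subobjects: on $X=\mathbb{P}^1$ the twisted Euler sequence $0\to \mathcal{O}(-1)\to \mathcal{O}^{\oplus 2}\to \mathcal{O}(1)\to 0$ exhibits $\mathcal{O}(-1)$ as a subbundle of a finite bundle, yet $\mathcal{O}(-1)$ is not essentially finite. What is true, and what your argument actually needs, is the finer statement that the sheaf-theoretic kernel of an $\mathcal{O}_X$-linear map between objects of $EF(X)$ again lies in $EF(X)$. This holds, but it is a theorem of Nori, not a soft categorical fact: it combines the result that morphisms of Nori-semistable bundles have locally free, semistable, degree-zero kernels computed in $\mathcal{Q}coh(X)$ (so that $SS(X)$ is abelian with the inclusion into $\mathcal{Q}coh(X)$ exact) with the closure of $EF(X)$ under subobjects taken in $SS(X)$ (\cite{Nor}, \S 3, the results behind Proposition 3.7 cited in the paper). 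This is precisely where the degree-zero arguments that the paper carries out explicitly are hidden. You correctly identify this as the main obstacle, but the principle you offer in its place (closure under $\mathcal{Q}coh$-subobjects) is false and would not prove it; as written, the central step of your proof is unsupported.

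Second, in the torsor case the equivariant identification $r^{\ast}\mathcal{F}\cong \mathcal{O}_W\otimes_k V$ with $V=\mathcal{F}_y$ ``regarded as a representation of $N$'' is unjustified. Descent data on a trivial bundle along the $N$-torsor $r$ is a cocycle $N\times W\to GL_n$, i.e.\ an element of $GL_n\bigl(k[N]\otimes_k H^0(W,\mathcal{O}_W)\bigr)$, and nothing makes it come from a representation of $N$ unless $H^0(W,\mathcal{O}_W)=k$; no such connectedness is available (in the torsor branch of the theorem neither $Y$ nor Garuti's $W$ need be connected, and indeed no rational point $y\in Y(k)$ is even assumed, so $\mathcal{F}_y$ is not defined). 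The repair is to drop $V$ altogether: fppf descent along $r$ gives $\mathcal{F}\simeq (r_{\ast}r^{\ast}\mathcal{F})^N$ directly, whence $f_{\ast}(\mathcal{F})\simeq (\pi_{\ast}r^{\ast}\mathcal{F})^N$ with $\pi_{\ast}r^{\ast}\mathcal{F}\simeq (\pi_{\ast}\mathcal{O}_W)^{\oplus n}\in EF(X)$, and the rest of your computation goes through. Note finally that you use \cite{Gar}, Theorem 1 in a stronger form than the paper does: the paper needs only that $W\to X$ and $W\to T_2$ are torsors (hence $W\to Z$ faithfully flat), whereas your descent step requires the surjection $\tilde G\to G$ and that $W\to Y$ is a torsor under its kernel. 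This extension structure is indeed part of Garuti's theorem, but since your whole argument rests on it, it must be cited precisely rather than inferred.
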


The two following corollaries are immediate consequences of this last theorem:

\begin{cor}\label{compos-ess-fin-morph} 
Let $X$  be a reduced, proper $k$-scheme such that $H^0 (X,\mathcal{O}_X)=k$, endowed with a rational point $x \in X(k)$. Let $f: Y\to X$  be a reduced, proper $k$-scheme such that $H^0 (Y,\mathcal{O}_Y)=k$, endowed with a rational point $y \in Y_x(k)$. Let $g:Z\to Y$ be an essentially finite morphism of $k$-schemes. Then $f\circ g$ is essentially finite.\end{cor}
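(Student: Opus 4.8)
The plan is to deduce Corollary~\ref{compos-ess-fin-morph} directly from Theorem~\ref{teoDEDICATOaWEIL} by choosing an appropriate vector bundle $\mathcal{F}$ on $Z$ and recognizing $(f\circ g)_\ast(\mathcal{F})$ as the object whose essential finiteness we need. The natural candidate is $\mathcal{F}=\mathcal{O}_Z$, since by Definition~\ref{essentiellement-fini} the morphism $f\circ g$ is essentially finite precisely when $(f\circ g)_\ast(\mathcal{O}_Z)$ is an essentially finite vector bundle on $X$, granted that $f\circ g$ is itself finite and faithfully flat. So first I would check the finiteness and faithful flatness of $f\circ g$: each of $f$ and $g$ is finite and faithfully flat (the latter by the convention in Definition~\ref{essentiellement-fini} together with connectedness of the base), and composites of finite morphisms are finite while composites of faithfully flat morphisms are faithfully flat, so $f\circ g$ is finite and faithfully flat.

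Next I would set up the application of Theorem~\ref{teoDEDICATOaWEIL} with $Y$ playing the role of the intermediate base and $Z\to Y$ the essentially finite morphism $g$. The hypotheses of the theorem are met: $X$ is reduced and proper with $H^0(X,\mathcal{O}_X)=k$ and pointed at $x$, while $Y$ is by assumption a reduced proper $k$-scheme with $H^0(Y,\mathcal{O}_Y)=k$ pointed at $y\in Y_x(k)$, so we are in the first of the two allowed cases of the theorem with base $X$ and essentially finite morphism $f\colon Y\to X$. The bundle to which the theorem is applied is $\mathcal{F}:=g_\ast(\mathcal{O}_Z)$ on $Y$: since $g$ is essentially finite, $g_\ast(\mathcal{O}_Z)$ is an essentially finite vector bundle on $Y$, hence in particular (as explained in the discussion following Definition~\ref{defCATEGORIA.GENERATA}, and by Theorem~\ref{universel} applied to $g$) it is trivialized by a torsor over $Y$ under a finite $k$-group scheme, which is exactly the condition on $\mathcal{F}$ required by Theorem~\ref{teoDEDICATOaWEIL}.

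Then I would invoke the projection/compatibility of pushforward along a composite, namely the canonical identification $(f\circ g)_\ast(\mathcal{O}_Z)\simeq f_\ast\bigl(g_\ast(\mathcal{O}_Z)\bigr)=f_\ast(\mathcal{F})$. Theorem~\ref{teoDEDICATOaWEIL} then yields that $f_\ast(\mathcal{F})$ is an essentially finite vector bundle on $X$, and therefore so is $(f\circ g)_\ast(\mathcal{O}_Z)$. Combined with the finiteness and faithful flatness of $f\circ g$ established above, Definition~\ref{essentiellement-fini} gives exactly that $f\circ g$ is essentially finite, which is the assertion.

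The main obstacle is the verification that $\mathcal{F}=g_\ast(\mathcal{O}_Z)$ genuinely satisfies the trivialization hypothesis of Theorem~\ref{teoDEDICATOaWEIL}, i.e. that it is trivialized by a torsor over $Y$ under a finite group scheme rather than merely being an abstract object of $EF(Y)$. This is where I expect the real content to sit: one must produce, from the essential finiteness of $g$, a single finite torsor over $Y$ that trivializes $g_\ast(\mathcal{O}_Z)$, which is precisely the role played by the Galois closure of $g$ furnished by Theorem~\ref{universel} (applied with $X$ replaced by $Y$). Everything else — finiteness, faithful flatness, and the pushforward identity for a composite — is routine and functorial, so the corollary is essentially a bookkeeping consequence of the theorem once this trivialization point is settled.
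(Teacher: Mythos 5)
Your proposal is correct and takes essentially the same route as the paper: the paper's own two-line proof invokes Theorem~\ref{universel} (i.e.\ the universal/Galois-closure torsor of $g$ over $Y$) to produce a finite torsor trivializing $g_\ast(\mathcal{O}_Z)$, and then applies Theorem~\ref{teoDEDICATOaWEIL} to $\mathcal{F}=g_\ast(\mathcal{O}_Z)$ to conclude that $f_\ast g_\ast(\mathcal{O}_Z)=(f\circ g)_\ast(\mathcal{O}_Z)$ is essentially finite. Your additional verifications (finiteness and faithful flatness of the composite, and the identification $(f\circ g)_\ast=f_\ast\circ g_\ast$) are routine details the paper leaves implicit, so there is no substantive difference.
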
 

\proof From Theorem \ref{universel}, there exists some torsor $T \to Y$ under a finite group scheme which trivializes $g_\ast (\mathcal{O}_Z)$. Thus $f_\ast g_\ast (\mathcal{O}_Z)$ is essentially finite. \endproof

\medskip

\begin{cor}\label{tour-de-torseurs}
Let $k$ be a field and $X$ a proper, reduced
$k$-scheme, such that $H^0 (X,\mathcal{O}_X)=k$, provided with a point $x\in X(k)$. Suppose we are given two
finite $k$-group schemes $G$ and $G'$, a $G$-torsor $f:Y\to X$
and a $G'$-torsor $f':Y'\to Y$. Then $(f\circ
f')_{\ast}(\mathcal{O}_{Y'})$ is an essentially finite vector bundle over $X$.
\end{cor}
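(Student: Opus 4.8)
The plan is to recognize this corollary as a direct instance of Theorem \ref{teoDEDICATOaWEIL}, in which the inner torsor $f'$ supplies simultaneously the vector bundle to be pushed forward and the torsor that trivializes it. First I would use functoriality of the direct image to write $(f \circ f')_{\ast}(\mathcal{O}_{Y'}) = f_{\ast}(f'_{\ast}(\mathcal{O}_{Y'}))$. Since $f': Y' \to Y$ is a torsor under the finite $k$-group scheme $G'$, it is in particular finite and faithfully flat, so $\mathcal{F} := f'_{\ast}(\mathcal{O}_{Y'})$ is a vector bundle over $Y$, of rank equal to the order of $G'$.

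The one point genuinely specific to this corollary is to check that $\mathcal{F}$ is trivialized by a torsor over $Y$ under a finite $k$-group scheme, as required by the hypotheses of Theorem \ref{teoDEDICATOaWEIL}. The natural candidate is $f': Y' \to Y$ itself. By the defining property of a $G'$-torsor there is a canonical isomorphism $Y' \times_Y Y' \cong Y' \times_k G'$, so applying flat base change to the cartesian square formed by $f'$ against itself gives $f'^{\ast} f'_{\ast}(\mathcal{O}_{Y'}) \cong \mathcal{O}_{Y'}^{\oplus n}$, where $n$ is the order of $G'$. Thus $\mathcal{F}$ becomes free after pullback along $f'$, i.e. it is trivialized by the $G'$-torsor $f': Y' \to Y$.

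Finally, since $f: Y \to X$ is a $G$-torsor under a finite group scheme, the second alternative in the hypotheses of Theorem \ref{teoDEDICATOaWEIL} is met, so I need not verify $H^0(Y,\mathcal{O}_Y)=k$ nor the existence of a rational point in $Y_x$. Applying the theorem to $f$ and to the bundle $\mathcal{F} = f'_{\ast}(\mathcal{O}_{Y'})$ yields that $f_{\ast}(\mathcal{F}) = (f \circ f')_{\ast}(\mathcal{O}_{Y'})$ is an essentially finite vector bundle over $X$, as desired. I do not anticipate any real obstacle: the substance of the argument is carried entirely by Theorem \ref{teoDEDICATOaWEIL}, and the only verification proper to this corollary is the elementary trivialization computation above.
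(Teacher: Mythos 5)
Your proof is correct and is essentially identical to the paper's own argument: the paper likewise observes that $Y'\times_Y Y'\simeq Y'\times G'$ implies $f'_{\ast}(\mathcal{O}_{Y'})$ is trivialized by the torsor $f':Y'\to Y$ itself, and then invokes Theorem \ref{teoDEDICATOaWEIL} (via its second alternative hypothesis, since $f$ is a torsor). Your write-up merely fills in the flat base change computation that the paper leaves implicit.
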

\proof As $f': Y' \to Y$ is a torsor under $G'$, then $Y' \times _Y Y' \simeq Y'\times G'$. Therefore $f'_\ast (\mathcal{O}_{Y'})$ is trivialized by the torsor $f' : Y' \to Y$. According to Theorem \ref{teoDEDICATOaWEIL}, $(f\circ f')_{\ast}(\mathcal{O}_{Y'})$ is an essentially finite vector bundle. \endproof

From corollary  \ref{tour-de-torseurs} and theorem \ref{universel} we finally obtain the Galois closure of towers
of torsors:

\begin{cor}\label{tour-de-torseurs-2}
Let $k$ be a field and $X$ a reduced proper
$k$-scheme, such that $H^0 (X,\mathcal{O}_X)=k$, provided with a point $x:Spec(k)\to X$.
Let $G$ and $G'$ be two finite $k$-group schemes, $f:Y\to X$ a $G$-torsor
and $f':Y'\to Y$ a $G'$-torsor. We assume the
existence of a point $y:Spec(k)\to  Y$ lying over $x$ and of a
point $y':Spec(k)\to Y'$ lying over $y$. We assume that $H^0 (Y, \mathcal{O}_Y)=H^0 (Y', \mathcal{O}_{Y'})=k$

\begin{enumerate}

\item Then there exists a finite
$k$-group scheme  $\hat {G}$, a $\hat {G}$-torsor
$p: {U}\to X$ pointed by a rational point $u \in U (k)$ above $x$ and a faithfully flat morphism
$\lambda:{U}\twoheadrightarrow Y'$ with a right torsor structure such that $f \circ f' \circ \lambda = p$.

\item Moreover the torsor $p: U \to X$ satisfies the following universal property: for any quotient triple $(T, H, t)$, where $g: T \to X$ is a torsor under a finite $k$-group scheme $H$ and $t$ a $k$-rational point over $x$, and a faithfully flat morphism $\mu: T \to Y'$ such that $\mu (t)=y'$, there exists a unique morphism of pointed torsors $(h, \varphi )$ where $\varphi: H \to \hat G$ and $h: T\to U$ sending $t$ to $u$ making the following diagram commutative 

$$\xymatrix{T\ar@/_/[dddrr]_{g} \ar@/^/[drr]^{\mu} \ar@{-->}[dr]^{h} & &  \\  & U \ar@{->>}[rdd]_{p} \ar@{->>}[rd]
\ar@{->>}[r]^{\lambda}& Y'\ar@{->>}[d]^{f'}\\
 &  &  Y\ar@{->>}[d]^{f}\\
 &&X }$$

\item Denote by $Y'_x$ the fiber of $x$ in the morphism $f \circ f'$, and $Y'_y\subset Y'_x$ the fiber of $y$ in the morphism $f'$. Let $\hat G_{y'}$ (resp. $ \hat G_y$) be the stabilizer of $y'$ (resp. of $Y'_y$) in the action of $\hat G $ on $Y' _x$.

\begin{enumerate}
\item Then $\hat G_y $ is a normal subgroup of $\hat G$; $f' \circ \lambda: \hat X \to Y$ is a right torsor under $\hat G_y$; and $G \simeq \hat G/\hat G_y$ in the Galois correspondence.

\item Also $\hat G_{y'} $ is normal in $\hat G_y$; $\lambda: \hat X \to Y'$ is a torsor under $\hat G _{y'}$; and $G' \simeq \hat G_y / \hat G_{y'}$ in the Galois correspondence.
 \end{enumerate}
 \end{enumerate}
 \end{cor}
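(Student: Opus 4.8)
The whole statement will follow by applying the Galois-closure machinery of Theorem \ref{universel} and the Galois correspondence of Corollary \ref{correspondance} to the single morphism $f\circ f'\colon Y'\to X$. The first step is to check that $f\circ f'$ is an essentially finite morphism: by Corollary \ref{tour-de-torseurs} the sheaf $(f\circ f')_{\ast}(\mathcal{O}_{Y'})$ is an essentially finite vector bundle, which is exactly Definition \ref{essentiellement-fini}. I would then verify the hypotheses of Theorem \ref{universel} for $f\circ f'$: one has $H^0(X,(f\circ f')_{\ast}\mathcal{O}_{Y'})=H^0(Y',\mathcal{O}_{Y'})=k$ by assumption, and $y'$ is a rational point of the fibre $(Y')_x$. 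Writing $S:=\{(f\circ f')_{\ast}\mathcal{O}_{Y'}\}$, the group $\hat G:=\pi_1(X,S,x)$ is finite because $S$ is a single object (cf. the discussion after Definition \ref{torseur-universel}). Theorem \ref{universel}(1) then produces the pointed universal torsor, which is the scheme $(U,u)$ of the statement, with structural morphism $p\colon U\to X$, together with a unique faithfully flat $\lambda\colon U\to Y'$ with $\lambda(u)=y'$ and $(f\circ f')\circ\lambda=p$; Theorem \ref{universel}(2) moreover equips $\lambda$ with the structure of a right torsor under the stabiliser $\hat G_{y'}$ of $y'$ in $(Y')_x$. This gives Part 1, and Part 2 is the verbatim translation of the universal property of Theorem \ref{universel}(3): the morphism $h$ and the homomorphism $\varphi$ are those furnished there, and commutativity of the enlarged diagram obtained by factoring $f\circ f'$ through $Y'\to Y\to X$ is automatic.

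For Part 3 I would work inside the $\hat G$-torsor $p\colon U\to X$, which is a quotient torsor since $\hat G$ is a quotient of $\pi_1(X,x)$ (Remark \ref{osservaTRIPLERIDOTTE}). The key preliminary observation is that the morphism $(Y')_x\to Y_x$ induced by $f'$ is $\hat G$-equivariant — both actions are read off from $U$ through $\lambda$ and $f'\circ\lambda$ — and that $Y'_y$ is precisely its fibre over $y$; hence the stabiliser $\hat G_y$ of the subscheme $Y'_y$ coincides with the stabiliser of the point $y$ for the action of $\hat G$ on $Y_x$, and $\hat G_{y'}\subseteq\hat G_y$ because an element fixing $y'$ fixes its image $y$. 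Now $f'\circ\lambda\colon U\to Y$ is faithfully flat and sends $u$ to $y$, so $Y\to X$ is an essentially finite morphism dominated by $U\to X$ (Definition \ref{domination}); by Corollary \ref{correspondance} it corresponds to $\hat G_y$ and is realised as $U/\hat G_y$, with $f'\circ\lambda$ the quotient $\hat G_y$-torsor. Since $f\colon Y\to X$ is itself a torsor, Corollary \ref{correspondance}(2) forces $\hat G_y\triangleleft\hat G$ and presents $Y\to X$ as a torsor under $\hat G/\hat G_y$; as the structural group of a pointed torsor is determined up to isomorphism, $G\simeq\hat G/\hat G_y$. This settles Part 3(a).

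For Part 3(b) I would iterate the correspondence one level down, over the base $Y$: the morphism $f'\colon Y'\to Y$ is a $G'$-torsor, hence essentially finite, dominated by the $\hat G_y$-torsor $U\to Y$ through $\lambda$ with $\lambda(u)=y'$, and the stabiliser of $y'$ in $Y'_y$ for the restricted action of $\hat G_y$ is again $\hat G_{y'}$. Corollary \ref{correspondance} applied over $Y$ then realises $Y'$ as $U/\hat G_{y'}$, and since $f'$ is a torsor the normality criterion yields $\hat G_{y'}\triangleleft\hat G_y$ together with $G'\simeq\hat G_y/\hat G_{y'}$, while $\lambda\colon U\to Y'$ is the associated $\hat G_{y'}$-torsor, in agreement with Part 1. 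The step I expect to be the main obstacle is exactly this change of base: invoking Corollary \ref{correspondance} over $Y$ presupposes that $Y$ is a reduced proper $k$-scheme with $H^0(Y,\mathcal{O}_Y)=k$ and that $U\to Y$ is a quotient torsor. Properness of $Y$ is immediate from finiteness of $f$, the condition $H^0(Y,\mathcal{O}_Y)=k$ is assumed, and the quotient-torsor condition reduces to $H^0(U,\mathcal{O}_U)=k$ (which makes the classifying map $\pi_1(Y,y)\to\hat G_y$ faithfully flat, cf. the construction underlying Theorem \ref{suite-exacte}); the genuinely non-formal point is the reducedness of $Y$, which I would either isolate as a separate lemma or avoid altogether by proving normality of $\hat G_{y'}$ in $\hat G_y$ directly, through fppf descent of the trivialisations $U\times_Y U\simeq U\times\hat G_y$ and $U\times_{Y'}U\simeq U\times\hat G_{y'}$ against the $G'$-torsor structure of $Y'\to Y$.
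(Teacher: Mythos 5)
Your proposal is correct and follows essentially the same route as the paper: parts 1 and 2 are obtained exactly as you do, from Corollary \ref{tour-de-torseurs} together with Theorem \ref{universel} applied to the essentially finite morphism $f\circ f'$, and part 3 rests on the same two observations --- the $\hat G$-equivariance of $Y'_x \to Y_x$, which identifies the stabiliser $\hat G_y$ of $Y'_y$ with the stabiliser of $y$ in $Y_x$, followed by Corollary \ref{correspondance}(2) applied first to the torsor $U \to X$ against $f: Y \to X$, and then to the torsor $U \to Y$ against the $G'$-torsor $f': Y' \to Y$. The one point where you go beyond the paper is the final obstacle you flag: the paper invokes Corollary \ref{correspondance} over the base $Y$ without any comment on its standing hypotheses (reducedness of $Y$, properness, $H^0(Y,\mathcal{O}_Y)=k$, and the quotient property of $U \to Y$), so your explicit verification of these conditions --- and your identification of reducedness of $Y$ as the genuinely non-formal one --- is a refinement of, not a divergence from, the paper's own argument.
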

\proof According to corollary
\ref{tour-de-torseurs} the sheaf $(f\circ
f')_{\ast}(\mathcal{O}_{Y'})$ is a finite vector bundle. Theorem \ref{universel} insures the existence of a Galois closure $U \to X$ of the essentially finite morphism $f\circ f'$ and it says that $\lambda:U \to Y'$ is a $\hat {G}_{y'}$-torsor where $\hat {G}_{y'}$ is the stabilizer of $y'$ in the action of $\hat G$ on the fiber $Y'_x$. So conclusions 1 and 2 of the theorem are immediate consequences of theorem \ref{universel} applied to the essentially finite morphism $f \circ f'$. To prove 3 first remark that $f' : Y' \to Y$ induces a morphism $Y'_x \to Y_x$ which is  compatible with the actions of $\hat G$. Thus the stabilizer $\hat G _y$ of $Y' _y$ is also the stabilizer of $y$ in the action of $\hat G $ on $Y_x$. Thus the point 3 (a) is a consequence of the point 2 of corollary \ref{correspondance} applied to the torsor $p : U \to X$ and the torsor $f : Y \to X$. Finally by the point 1 of corollary \ref{correspondance},  $UÊ\to Y$ is a torsor under $\hat G_y $ and the point 3 (b) is a consequence again of second point of corollary \ref{correspondance} applied to this torsor and the $G'$-torsor $f' : Y' \to Y$. \endproof

\medskip

\begin{rem}\label{remGarou} We already mentioned in the introduction that the ``Galois closure'' constructed by Garuti in \cite{Gar} is in general bigger than the Galois closure given by theorem \ref{universel}. Here is an example. Let $k$ be a field of characteristic $2$, $A$ be an abelian variety of dimension $2$ over $k$. 
Let us assume that the multiplication by $2$ map $2_A:A'=A\to A$ is a $\mathbf{\alpha}_{2}\times \mathbf{\alpha}_{2}$-torsor. Consider the quotient morphism $\rho:\mathbf{\alpha}_{2}\times \mathbf{\alpha}_{2}\to \mathbf{\alpha}_{2}$ and the contracted product, through $\rho$, $B:=A'\times^{\mathbf{\alpha}_{2}\times \mathbf{\alpha}_{2}} \mathbf{\alpha}_{2}$, then $B\to A$ and $A'\to B$ are both $\mathbf{\alpha}_2$-torsors, thus $A'\to B\to A$ is a tower of torsors. According to Corollary \ref{tour-de-torseurs-2} and theorem \ref{universel} (3) the Galois closure that we construct coincides with the $\mathbf{\alpha}_{2}\times \mathbf{\alpha}_{2}$-torsor  $2_A:A'\to A$. Now we follow the proof of \cite{Gar}, Theorem 2 in order to obtain Garuti's Galois closure : using Garuti's notation consider $\Phi(\mathbf{\alpha}_2,\mathbf{\alpha}_2)$  the $k$-group scheme of morphisms (in the category of $k$-Schemes) from $\mathbf{\alpha}_2$ to $\mathbf{\alpha}_2$ and $\Phi^1(\mathbf{\alpha}_2,\mathbf{\alpha}_2)$ the $k$-subgroup scheme of $\Phi(\mathbf{\alpha}_2,\mathbf{\alpha}_2)$ sending $1_{\mathbf{\alpha}_2}$ to $1_{\mathbf{\alpha}_2}$, then  $\Phi^1(\mathbf{\alpha}_2,\mathbf{\alpha}_2)\simeq \mathbb{G}_a$. Finally, let $F:k\to k$ be the Frobenius morphism,  $\mathbb{G}_a^{(2)}$ the pull back of  $\mathbb{G}_a$ following $F$ and ${}_{F^1}\mathbb{G}_a$ the kernel of $\mathbb{G}_a\to \mathbb{G}_a^{(2)}$ then ${}_{F^1}\mathbb{G}_a\simeq \mathbf{\alpha}_2$ (cf. \cite{DemGab}, II \S 7, 1.5). Garuti's construction thus provides  a torsor $Z\to A$ where  $Z\simeq A'\times \mathbf{\alpha}_2$, which obviously does not coincide with $2_A:A'\to A$.
\end{rem}

\subsection{Preliminary tools}

The following two lemmas are straightforward enough that we leave their proof to the reader:

\begin{lem}\label{lemmaPRODOTTO.CONTRATTO1}Let $\mathcal{C}$ and
$\mathcal{C}'$ be two tannakian categories, $\gamma , \eta:\mathcal{C}\to
\mathcal{Q}coh(S)$ two fiber functors over a $k$-scheme $S$ and $F:\mathcal{C}' \to \mathcal{C}$ an exact 
tensor functor. We have the following relation between torsors
$$\underline{Isom}_S^{\otimes}(\gamma \circ F,\eta\circ F)\simeq\underline{Isom}_S^{\otimes}(\gamma ,\eta)
\times^{\underline{Aut}_S^{\otimes}(\gamma)}\underline{Aut}_S^{\otimes}(\gamma\circ
F)$$ the second term being the contracted product (see for instance \cite{DemGab}, III, \S 4, 3.2).
\end{lem}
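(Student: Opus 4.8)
The plan is to exhibit a natural equivariant morphism between the two objects, both of which are torsors under the same group scheme $H := \underline{Aut}_S^{\otimes}(\gamma \circ F)$, and then to invoke the elementary principle that any morphism of torsors under the same group scheme over the same base is automatically an isomorphism. Write $G := \underline{Aut}_S^{\otimes}(\gamma)$, $P := \underline{Isom}_S^{\otimes}(\gamma, \eta)$, and $Q := \underline{Isom}_S^{\otimes}(\gamma \circ F, \eta \circ F)$. Since $F$ is an exact tensor functor, both $\gamma \circ F$ and $\eta \circ F$ are again fiber functors on the tannakian category $\mathcal{C}'$, so by Theorem \ref{teorDELIGNEMILNE} the functor $Q$ is a right $H$-torsor over $S$, while $P$ is a right $G$-torsor.

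First I would introduce the whiskering homomorphism $\phi: G \to H$. On $T$-points (for $\varphi: T \to S$), a tensor automorphism $\sigma$ of $\varphi^{\ast}\circ\gamma$ produces the family $(\sigma_{F(-)})$, which is a tensor automorphism $\phi(\sigma)$ of $\varphi^{\ast}\circ\gamma\circ F$; its naturality and tensor-compatibility are inherited from those of $\sigma$ together with the fact that $F$ is a tensor functor. By the definition of the contracted product (\cite{DemGab}, III, \S 4, 3.2), this homomorphism $\phi$ makes the right-hand side $P \times^{G} H$ a right $H$-torsor over $S$.

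Next I would build the comparison map by the same whiskering operation, now applied to isomorphisms: on $T$-points a tensor isomorphism $\psi: \varphi^{\ast}\circ\gamma \Rightarrow \varphi^{\ast}\circ\eta$ yields $(\psi_{F(-)}): \varphi^{\ast}\circ\gamma\circ F \Rightarrow \varphi^{\ast}\circ\eta\circ F$, defining a morphism of $S$-functors $w: P \to Q$. The decisive point is that whiskering respects composition, $(\psi \circ \sigma)_{F(-)} = \psi_{F(-)} \circ \sigma_{F(-)}$, so that $w(\psi \cdot \sigma) = w(\psi) \cdot \phi(\sigma)$; hence $w$ is $\phi$-equivariant. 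The universal property of the contracted product then supplies a unique $H$-equivariant factorization $\Theta: P \times^{G} H \to Q$, given on classes by $[(\psi, h)] \mapsto w(\psi) \cdot h$.

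Finally I would conclude: $\Theta$ is an $H$-equivariant morphism between two right $H$-torsors over $S$, and therefore an isomorphism, which is precisely the asserted identification. I expect the only genuine verifications — none of them deep — to be, first, that $\phi(\sigma)$ and $w(\psi)$ remain tensor morphisms after whiskering, which reduces to $F$ being a tensor functor and to the formal associativity $(\varphi^{\ast}\circ\gamma)\circ F = \varphi^{\ast}\circ(\gamma\circ F)$; and second, that $Q$ is a genuine torsor rather than a mere pseudo-torsor, which is where exactness of $F$ enters, guaranteeing that $\gamma\circ F$ and $\eta\circ F$ are fiber functors so that Theorem \ref{teorDELIGNEMILNE} applies. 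No single step should present a real obstacle; the entire content is the bookkeeping of whiskering together with the torsor-morphism principle.
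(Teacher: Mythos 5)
Your proof is correct. The paper in fact gives no proof of this lemma (it is explicitly ``left to the reader''), and your argument — whiskering along $F$ to get the homomorphism $\underline{Aut}_S^{\otimes}(\gamma)\to\underline{Aut}_S^{\otimes}(\gamma\circ F)$ and the equivariant map $\underline{Isom}_S^{\otimes}(\gamma,\eta)\to\underline{Isom}_S^{\otimes}(\gamma\circ F,\eta\circ F)$, factoring through the contracted product by its universal property, and concluding because an equivariant morphism of torsors under the same group scheme is an isomorphism — is precisely the standard argument the authors intend, with the two delicate points (exactness of $F$ making $\gamma\circ F$, $\eta\circ F$ genuine fiber functors, hence $Q$ a torsor and not merely a pseudo-torsor, and the $\phi$-equivariance of the whiskering map) correctly identified and handled.
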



\medskip
\begin{lem}\label{2}
Let $\cal C$ be a tannakian category, $j: S' \to S$ a flat morphism of $k$-schemes, $\eta,\gamma$ two fiber functors over $S$, then there is a canonical morphism of right torsors

$$\underline {Isom}_{S'} ^\otimes (j^\ast\circ \eta, j^\ast \circ \gamma) \simeq j^\ast \underline {Isom}_S ^\otimes (\eta,\gamma).$$

\end{lem}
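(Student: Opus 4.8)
The plan is to verify the stated isomorphism at the level of functors of points on the category of $S'$-schemes and then to conclude by Yoneda, both sides being representable: by Theorem \ref{teorDELIGNEMILNE} each $\underline{Isom}$-functor is a torsor, hence an affine $S'$-scheme, and $j^\ast\underline{Isom}_S^\otimes(\eta,\gamma)$ is by definition the fibre product $\underline{Isom}_S^\otimes(\eta,\gamma)\times_S S'$, whose functor of points on an $S'$-scheme $\psi:T'\to S'$ is $\underline{Isom}_S^\otimes(\eta,\gamma)(T'\xrightarrow{j\psi}S)$. Before anything else I would record why the left-hand side even makes sense: since $j$ is flat, the pullback $j^\ast:\mathcal{Q}coh(S)\to\mathcal{Q}coh(S')$ is exact, so $j^\ast\circ\eta$ and $j^\ast\circ\gamma$ are again faithful exact tensor functors, i.e. genuine fibre functors over $S'$. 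This is the only place where the flatness hypothesis is used.

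Next I would compute both functors of points on an arbitrary $\psi:T'\to S'$. The left-hand side yields $Isom^\otimes_{T'}(\psi^\ast j^\ast\circ\eta,\ \psi^\ast j^\ast\circ\gamma)$, while the right-hand side yields $Isom^\otimes_{T'}((j\psi)^\ast\circ\eta,\ (j\psi)^\ast\circ\gamma)$. The comparison between them is furnished by the canonical tensor natural isomorphism $c:\psi^\ast\circ j^\ast \Rightarrow (j\circ\psi)^\ast$ coming from the pseudo-functoriality of pullback. Whiskering $c$ with $\eta$ (resp. $\gamma$) produces tensor isomorphisms $c_\eta:\psi^\ast j^\ast\circ\eta\xrightarrow{\sim}(j\psi)^\ast\circ\eta$ and $c_\gamma$, and then $\alpha\mapsto c_\gamma\circ\alpha\circ c_\eta^{-1}$ is a bijection between the two Isom-sets, with inverse given by conjugating back.

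Finally I would check the three compatibilities that upgrade this family of bijections to the asserted isomorphism of torsors. Naturality in $T'$, that is commutation with restriction along any $T''\to T'$ over $S'$, follows from the coherence (cocycle) condition satisfied by the comparison isomorphisms $c$. Compatibility with $\otimes$ is automatic, since $c$ is a tensor transformation and conjugation by it preserves tensor-compatible isomorphisms. For equivariance, the special case $\eta=\gamma$ of the same argument identifies the structure groups $\underline{Aut}_{S'}^\otimes(j^\ast\circ\eta)\simeq j^\ast\underline{Aut}_S^\otimes(\eta)$; since the right actions on both sides are by precomposition, a one-line computation shows $c_\gamma\circ(\alpha\circ g)\circ c_\eta^{-1}=(c_\gamma\circ\alpha\circ c_\eta^{-1})\circ(c_\eta\circ g\circ c_\eta^{-1})$, so the bijections are equivariant. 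By Yoneda these assemble into the canonical isomorphism of right torsors. The only genuine obstacle is bookkeeping: ensuring that the canonical isomorphisms $c$, as data of the pseudo-functor $T\mapsto\mathcal{Q}coh(T)$, $\varphi\mapsto\varphi^\ast$, are coherent enough that the resulting identification is simultaneously natural in $T'$, tensor-compatible and equivariant. Once this coherence is granted the statement is essentially tautological.
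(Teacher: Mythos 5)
Your proof is correct. The paper in fact gives no proof of this lemma (it is among the two lemmas declared ``straightforward enough that we leave their proof to the reader''), and your argument --- identifying the functors of points of both sides on an $S'$-scheme $\psi:T'\to S'$, conjugating by the coherence isomorphism $\psi^{\ast}\circ j^{\ast}\simeq (j\circ \psi)^{\ast}$, and checking naturality in $T'$, tensor-compatibility, and equivariance for the right actions by precomposition (with the structure groups identified by the case $\eta=\gamma$) --- is precisely the standard verification the authors intended.
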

\medskip

\begin{thm}\label{thReferee}
There is a one-to-one correspondence between the following objects:

\begin{enumerate}

\item torsors $T \to X$ under a finite $k$-group scheme $G$ endowed with a rational point $t \in T(k)$ over $x \in X(k)$

\item morphisms $\varphi: \pi _1 (X,x) \to G$

\item exact tensor functors $\gamma: Rep_k (G) \to Qcoh(X)$ satisfying the relation $x ^\ast \circ \gamma \simeq {\rm forget} _{kG}$.
\end{enumerate}
\end{thm}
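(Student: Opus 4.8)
The plan is to prove the three-term correspondence by establishing the two equivalences $(1)\Leftrightarrow(3)$ and $(2)\Leftrightarrow(3)$, each resting on one of the fundamental tannakian results already available. Throughout I use that $Rep_k(G)$ is a neutral tannakian category whose tannakian Galois group attached to the fibre functor ${\rm forget}_{kG}$ is $G$ itself, and that $EF(X)=Rep_k(\pi_1(X,x))$ by Nori's theorem, the fibre functor $\omega$ corresponding under this identification to the forgetful functor.

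For $(1)\Leftrightarrow(3)$ I would apply the Deligne--Milne equivalence of gerbes (Theorem \ref{teorDELIGNEMILNE}) to $\mathcal{C}=Rep_k(G)$ with $\gamma={\rm forget}_{kG}$ and $S=X$: it identifies fibre functors $Rep_k(G)\to\mathcal{Q}coh(X)$ with $G$-torsors over $X$, the functor attached to a torsor $f\colon T\to X$ being the associated-bundle functor $\eta_T\colon V\mapsto T\times^{G}V$. First I note that an exact tensor functor $\gamma$ satisfying $x^{\ast}\circ\gamma\simeq{\rm forget}_{kG}$ is automatically a fibre functor, because it is faithful: if $\gamma(u)=0$ then $x^{\ast}\gamma(u)=0$, and faithfulness of ${\rm forget}_{kG}$ forces $u=0$. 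It then remains to match the pointings. Since associated bundles commute with base change, $x^{\ast}\circ\eta_T=\eta_{x^{\ast}T}$ as fibre functors over $k$ (this base-change compatibility is also an instance of Lemma \ref{2}); hence a trivialisation of the $G$-torsor $x^{\ast}T$, that is a rational point $t\in T(k)$ above $x$, corresponds precisely to the chosen isomorphism $x^{\ast}\circ\gamma\simeq{\rm forget}_{kG}$. This yields the bijection between pointed $G$-torsors and exact tensor functors satisfying the fibre condition.

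For $(2)\Leftrightarrow(3)$ I would use functoriality of the tannakian construction. Given $\varphi\colon\pi_1(X,x)\to G$, pullback of representations gives $\varphi^{\ast}\colon Rep_k(G)\to Rep_k(\pi_1(X,x))=EF(X)$, and $\gamma:=i_X\circ\varphi^{\ast}$ is an exact tensor functor with $x^{\ast}\circ\gamma=\omega\circ\varphi^{\ast}\simeq{\rm forget}_{kG}$, so the fibre condition holds. Conversely, starting from $\gamma$ as in $(3)$, the point is to see that $\gamma$ factors through $EF(X)$; once this is known, $\gamma\colon Rep_k(G)\to EF(X)=Rep_k(\pi_1(X,x))$ is an exact tensor functor compatible with the two forgetful fibre functors, and passing to automorphism group schemes produces the desired $\varphi\colon\pi_1(X,x)=\underline{Aut}^{\otimes}_k(\omega)\to\underline{Aut}^{\otimes}_k({\rm forget}_{kG})=G$. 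The two constructions are mutually inverse by the uniqueness built into Tannaka duality (\cite{DelMil}, 2.11).

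The main obstacle is exactly this factorisation: that $\gamma(V)$ is an essentially finite vector bundle for every $V\in Rep_k(G)$. That $\gamma(V)$ is a vector bundle is formal, since $V$ is dualisable in the rigid category $Rep_k(G)$, an exact tensor functor preserves duals, and the dualisable objects of $\mathcal{Q}coh(X)$ are precisely the finite locally free sheaves. For essential finiteness I would invoke the already-established equivalence $(3)\Leftrightarrow(1)$: the functor $\gamma$ is the associated-bundle functor of a $G$-torsor $f\colon T\to X$, so $\gamma(V)=T\times^{G}V$ and $f^{\ast}\gamma(V)$ is trivial; thus $\gamma(V)$ is trivialised by the finite torsor $T$ and is essentially finite by Nori's characterisation of $EF(X)$ (\cite{Nor2}). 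Establishing $(1)\Leftrightarrow(3)$ before $(2)\Leftrightarrow(3)$ therefore closes the loop and delivers the three equivalences simultaneously.
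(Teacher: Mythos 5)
Your proposal is correct and follows essentially the same route as the paper: both split the statement into the equivalences $(1)\Leftrightarrow(3)$ and $(2)\Leftrightarrow(3)$, using the Deligne--Milne gerbe equivalence (Theorem \ref{teorDELIGNEMILNE}) to match pointed torsors with tensor functors satisfying $x^\ast\circ\gamma\simeq{\rm forget}_{kG}$, and then reading off $\varphi:\pi_1(X,x)\to G$ from the factorisation of $\gamma$ through $EF(X)$ via Tannaka duality. The only cosmetic difference is that the paper invokes Nori's result (\cite{Nor2}, Chapter I, Prop. 3.8) directly for this factorisation, whereas you rederive it from the already-established torsor correspondence (the associated bundles $T\times^G V$ are trivialised by the finite torsor $T$, hence essentially finite), which rests on the same underlying fact of Nori.
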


\begin{proof}
$1\leftrightarrow 3.$ We already know that there is a one-to-one correspondence between (not necessarily pointed) right torsors $f: T \to X$ under the group $G$ over $X$ and tensor functors $\gamma: Rep_k (G) \to Qcoh (X)$ given by the following relation:

$$ T \simeq \underline {Isom}_X ^\otimes (\theta ^\ast \circ {\rm forget}_{kG}, \gamma )$$ where ${\rm forget}_{kG}: Rep_k (G) \to k$-$mod$ is the forgetful functor (cf. theorem \ref{teorDELIGNEMILNE}). The fiber functor $\gamma $ factors through a tensor functor $\tilde \gamma: Rep_k( G) \to EF(X)$, i.e. $\gamma  = i_X \circ \tilde \gamma $, where $i_X$ is the inclusion of $EF(X) $ into the category $Qcoh (X)$ (\cite{Nor2}, Chapter I, Prop. 3.8). The fiber of $x$ in $T$,  $x^\ast T \simeq Isom^\otimes ({\rm forget } _{kG}, x^\ast \circ \gamma)$ has a rational point (i.e. the torsor $T$ is pointed over $x$) if and only if $x^\ast \circ \gamma \simeq {\rm forget } _{kG}$. 

$2\leftrightarrow 3.$ Using the formulas $\gamma \simeq i_X \circ \tilde \gamma $ and $x^\ast \circ i_X \simeq {\rm forget} _{k\pi _1 (X,x)} \circ \tilde x$, the relation $x ^\ast \circ \gamma \simeq {\rm forget} _{kG}$ can be rewritten in the following form :

$${\rm forget}_{k \pi _1 (X,x)} \circ \tilde x \circ  \tilde \gamma \simeq {\rm forget} _{kG}$$
Thus the tensor functor $\tilde x \circ \tilde \gamma : Rep _k (G) \to Rep _k ( \pi _1 (X,x))$ is equivalent to the data of a morphism $\varphi: \pi _1 (X,x) \to G$. 
\end{proof}

If we are given a morphism $\varphi: \pi _1 (X,x) \to G$, we consider the contracted product 

$$\hat{X}\times ^{\pi _1 (X,x)} G$$  with respect to the morphism $\varphi$ which is a right $G$-torsor. The following proposition explains that this operation gives the correspondence $1\leftrightarrow 2$

\begin{prop}\label{produit-contracte}
Suppose that the $G$-torsor $T$ corresponding to the tensor functor $\gamma : Rep _k(G) \to Qcoh (X)$ is pointed over $x$, then $T \simeq \hat{X}\times ^{\pi _1 (X,x)} G$.

\end{prop}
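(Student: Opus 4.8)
The plan is to read off the statement from the contracted-product formula of Lemma~\ref{lemmaPRODOTTO.CONTRATTO1}, applied to the canonical factorization $\gamma = i_X \circ \tilde\gamma$ furnished by Theorem~\ref{thReferee}, combined with flat base change along $\theta$ (Lemma~\ref{2}). First I would collect the data attached to $T$ by Theorem~\ref{thReferee}: one has $T \simeq \underline{Isom}_X^{\otimes}(\theta^{\ast}\circ {\rm forget}_{kG}, \gamma)$, the functor $\gamma$ factors as $\gamma = i_X\circ\tilde\gamma$ with $\tilde\gamma : Rep_k(G)\to EF(X)$, and the hypothesis that $T$ is pointed over $x$ is equivalent to the isomorphism of fibre functors $\omega\circ\tilde\gamma = x^{\ast}\circ i_X\circ\tilde\gamma \simeq {\rm forget}_{kG}$.

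Next I would apply Lemma~\ref{lemmaPRODOTTO.CONTRATTO1} over the base $S=X$, with $\mathcal{C}=EF(X)$, $\mathcal{C}'=Rep_k(G)$, the two fibre functors $\theta^{\ast}\circ\omega$ and $i_X$ on $EF(X)$, and the exact tensor functor $F=\tilde\gamma$. Its left-hand side is $\underline{Isom}_X^{\otimes}(\theta^{\ast}\circ\omega\circ\tilde\gamma,\, i_X\circ\tilde\gamma)$, which by the two identifications $i_X\circ\tilde\gamma=\gamma$ and $\omega\circ\tilde\gamma\simeq{\rm forget}_{kG}$ is exactly $T$. The right-hand side is
$$\underline{Isom}_X^{\otimes}(\theta^{\ast}\circ\omega,\, i_X)\ \times^{\underline{Aut}_X^{\otimes}(\theta^{\ast}\circ\omega)}\ \underline{Aut}_X^{\otimes}(\theta^{\ast}\circ\omega\circ\tilde\gamma),$$
whose first factor is $\hat X$ by definition of the universal torsor. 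Since $k$ is a field the structure map $\theta$ is flat, so Lemma~\ref{2} gives $\underline{Aut}_X^{\otimes}(\theta^{\ast}\circ\omega)\simeq\theta^{\ast}\pi_1(X,x)$ and, through $\omega\circ\tilde\gamma\simeq{\rm forget}_{kG}$, $\underline{Aut}_X^{\otimes}(\theta^{\ast}\circ\omega\circ\tilde\gamma)\simeq\theta^{\ast}G$; thus the contracted product is formed over $X$ out of $\pi_1(X,x)$ and $G$.

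It remains to check that the action entering this contracted product is the one along $\varphi$. The structure map of the contracted product in Lemma~\ref{lemmaPRODOTTO.CONTRATTO1} is the restriction homomorphism $\underline{Aut}_k^{\otimes}(\omega)\to\underline{Aut}_k^{\otimes}(\omega\circ\tilde\gamma)$ sending an automorphism $g$ to $g\ast\tilde\gamma$. Under the tannakian equivalence $\tilde x : EF(X)\xrightarrow{\sim} Rep_k(\pi_1(X,x))$ and the identification $\omega\circ\tilde\gamma\simeq{\rm forget}_{kG}$, this map is the homomorphism of group schemes attached to the tensor functor $\tilde x\circ\tilde\gamma : Rep_k(G)\to Rep_k(\pi_1(X,x))$, which by the $2\leftrightarrow 3$ correspondence of Theorem~\ref{thReferee} is precisely $\varphi : \pi_1(X,x)\to G$. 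Feeding these identifications into the displayed isomorphism yields $T\simeq\hat X\times^{\pi_1(X,x)}G$, the contracted product being taken along $\varphi$. I expect this last identification of the restriction map with $\varphi$ to be the only genuinely delicate point; the remaining steps are formal manipulations of fibre functors together with flat base change.
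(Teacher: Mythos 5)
Your proposal is correct and follows essentially the same route as the paper: both apply Lemma~\ref{lemmaPRODOTTO.CONTRATTO1} with $F=\tilde\gamma$ to the fibre functors $\theta^{\ast}\circ\omega$ and $i_X$, and both convert the pointedness hypothesis into the identification ${\rm forget}_{kG}\simeq x^{\ast}\circ\gamma$ to recognize the resulting $\underline{Isom}$-scheme as $T$. Your closing paragraph identifying the structural map of the contracted product with $\varphi$ via the $2\leftrightarrow 3$ correspondence of Theorem~\ref{thReferee} makes explicit a point the paper leaves implicit, but it is a refinement, not a different argument.
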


\proof Recall that $\hat{X} = \underline {Isom}_X ^\otimes (\theta ^\ast \circ \omega , i_X)$ where $\omega=x^\ast \circ i_X$. Using lemma \ref{lemmaPRODOTTO.CONTRATTO1}, one has 
$$(\dagger ) \quad \hat{X} \times ^{\pi _1(X,x)} G \simeq \underline {Isom}_X^\otimes  (\theta ^\ast \circ \omega \circ \tilde \gamma , i_X\circ \tilde \gamma)\simeq \underline {Isom}_X^\otimes  (\theta ^\ast \circ x^\ast \circ \gamma , \gamma) $$  Using lemma \ref{2} and the definition of $T$, one gets

$$x^\ast T \simeq \underline {Isom}_k ^\otimes (x^\ast \circ \theta ^\ast \circ {\rm forget}_{kG}, x^\ast \circ \gamma )= \underline {Isom}_k ^\otimes ({\rm forget}_{kG}, x^\ast \circ \gamma ) $$

The fact that $T$ has a $k$-point over $x$ means that $x^\ast T$ is trivial, and then the functors ${\rm forget}_{kG}$ and $ x^\ast \circ \gamma $ are equivalent. 

Replacing in the formula $(\dagger )$, we get

$$ \quad \hat{X} \times ^{\pi _1(X,x)} G \simeq \underline {Isom}_X^\otimes  (\theta ^\ast \circ {\rm forget}_{kG} , \gamma)\simeq T$$ which completes the proof of the proposition. \endproof

\medskip

\begin{prop}\label{representation-reguliere}
Under the hypothesis of proposition \ref{produit-contracte}, there is an isomorphism of $\mathcal{O}_X$-algebras

$$f_\ast (\mathcal{O}_T) \simeq \tilde \gamma (kG)$$
where $kG$ denotes the regular representation of $G$ ($G= Spec (kG)$).
\end{prop}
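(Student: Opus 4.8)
The plan is to identify $\tilde\gamma$ with the associated-bundle functor and then to recognize $f_\ast(\mathcal{O}_T)$ as the bundle associated to the regular representation. First I would recall that the Deligne--Milne correspondence of Theorem \ref{teorDELIGNEMILNE}, under which $\gamma$ corresponds to the torsor $T \simeq \underline{Isom}_X^\otimes(\theta^\ast \circ {\rm forget}_{kG}, \gamma)$, is inverse to the associated-bundle construction: for every $V \in Rep_k(G)$ there is a natural isomorphism $\gamma(V) \simeq T \times^G V$, and hence $\tilde\gamma(V) \simeq T\times^G V$ in $EF(X)$ since $\gamma = i_X \circ \tilde\gamma$. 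Applying this to $V = kG$ (which indeed lies in $Rep_k(G)$, as $G$ is finite, so $kG = \mathcal{O}(G)$ is finite dimensional over $k$) reduces the statement to the purely geometric identity $f_\ast(\mathcal{O}_T) \simeq T \times^G kG$.

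To prove this identity I would use faithfully flat descent along $f$. Since $f$ is flat, flat base change along the Cartesian square with $f$ on both sides gives $f^\ast f_\ast(\mathcal{O}_T) \simeq (\mathrm{pr}_1)_\ast \mathcal{O}_{T\times_X T}$, where $\mathrm{pr}_1 : T\times_X T \to T$ is the first projection. The torsor trivialization $T\times_X T \xrightarrow{\sim} T \times G$, $(t, tg) \mapsto (t,g)$, carries $\mathrm{pr}_1$ to the projection $T\times G \to T$, whence $f^\ast f_\ast(\mathcal{O}_T) \simeq \mathcal{O}_T \otimes_k kG$. On the other hand $f^\ast(T\times^G kG) \simeq (f^\ast T)\times^G kG \simeq (T\times G)\times^G kG \simeq \mathcal{O}_T \otimes_k kG$, the last step because $G$ acts on $T\times G$ through the second factor by right translation and on $kG$ regularly, so the associated bundle of this trivial torsor is trivial. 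Both pullbacks are thus canonically $\mathcal{O}_T \otimes_k kG$; I would then verify that the two identifications are compatible with the respective descent data (equivalently, with the $G$-equivariant structures relative to $f$), so that the isomorphism descends over $X$ to $f_\ast(\mathcal{O}_T)\simeq T\times^G kG \simeq \tilde\gamma(kG)$.

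Finally, for the algebra structure I would use that the multiplication $m : kG \otimes_k kG \to kG$ of the regular representation is $G$-equivariant, since the translations defining the regular action are algebra automorphisms of $\mathcal{O}(G)$; thus $m$ is a morphism in $Rep_k(G)$. As $\tilde\gamma$ is a tensor functor, $\tilde\gamma(m)$ together with the tensor constraint $\tilde\gamma(kG)\otimes\tilde\gamma(kG) \simeq \tilde\gamma(kG\otimes_k kG)$ endows $\tilde\gamma(kG)$ with an $\mathcal{O}_X$-algebra structure, and one checks that it agrees with that of $f_\ast(\mathcal{O}_T)$ by pulling back to $T$, where both reduce to the standard product on $\mathcal{O}_T\otimes_k kG$. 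The main obstacle I anticipate is purely bookkeeping: tracking the descent data and the left/right action conventions carefully enough to be sure that the regular representation itself — and not a twist or its dual — is the representation that appears, and that the \emph{algebra} structures, not merely the underlying bundles, correspond.
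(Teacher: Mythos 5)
Your proof is correct, but it takes a genuinely different route from the paper's. The paper argues entirely through the base point: it uses the tannakian equivalence $\tilde x : EF(X) \to Rep_k(\pi_1(X,x))$ together with the dictionary (Deligne, 7.5--7.12, invoked in the proof of Theorem \ref{universel}) between finite $\pi_1(X,x)$-schemes and finite morphisms to $X$ with essentially finite direct image; it computes the fiber $x^\ast T \simeq \underline{Isom}_k^\otimes({\rm forget}_{kG}, {\rm forget}_{k\pi_1(X,x)}\circ\tilde x\circ\tilde\gamma)\simeq G$ with its left $\pi_1(X,x)$-action through $\varphi$ (this is exactly where the pointedness hypothesis of Proposition \ref{produit-contracte} enters), so that under $\tilde x$ the algebra $f_\ast(\mathcal{O}_T)$ corresponds to the regular representation $kG$ viewed through $\varphi$, i.e.\ to $\tilde x \circ \tilde\gamma(kG)$, and it concludes by applying $(\tilde x)^{-1}$; the algebra structure comes along because all the functors involved are tensor functors. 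You never use the point $x$: you identify $\tilde\gamma$ with the contracted-product functor $V \mapsto T\times^G V$ (the quasi-inverse in Theorem \ref{teorDELIGNEMILNE}) and reduce to the classical identity $f_\ast(\mathcal{O}_T)\simeq T\times^G kG$, which you prove by faithfully flat descent along $f$ itself. Each approach buys something. The paper's proof is shorter because $\tilde x$ and the scheme-theoretic dictionary are already in place for the proof of Theorem \ref{universel}. Yours is more geometric and self-contained, and actually proves a slightly stronger statement, since neither $\gamma(V)\simeq T\times^G V$ nor $f_\ast(\mathcal{O}_T)\simeq T\times^G kG$ requires the torsor to be pointed over $x$. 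The price is the descent bookkeeping you defer: checking that your two trivializations over $T$ intertwine the canonical descent datum on $f^\ast f_\ast(\mathcal{O}_T)$ with the equivariant one on $\mathcal{O}_T\otimes_k kG$ is routine, but it is precisely where the regular representation, rather than a twist of it, gets pinned down, so a complete write-up cannot omit it.
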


\proof Recall the following commutative diagrams of functors:

$$\xymatrix{
EF(X) \ar[r]^{x ^\ast } \ar[dr]_ {\tilde x}& k\text{-}mod \\
& Rep_k ( \pi _1 (X,x)) \ar[u]_{\rm forget_{k\pi _1 (X,x)}}
}$$
$$\xymatrix{
Rep_k(G) \ar[r]^{\gamma  } \ar[dr]_ {\tilde \gamma}& Qcoh (X) \\
& EF(X) \ar[u]_{i_X}
}$$

So $x^\ast T= \underline{Isom}_k^\otimes (\rm forget _{kG} , forget_{k \pi _1 (X,x)} \circ \tilde x \circ \tilde \gamma ) \simeq G$ viewed with the left action of $\pi _1 ( X,x)$ on $G$ defined by the morphism $\varphi: \pi _1 (X,x) \to G$ induced by the functor $\tilde x \circ  \tilde \gamma: Rep_k (G) \to Rep_k ( \pi _1 (X,x))$. As $\tilde x$ is an equivalence of categories, $f_\ast (\mathcal{O}_T) = (\tilde x )^{-1} (V)$, where $V$ is the regular representation $kG$ viewed as a representation of $\pi _1 (X,x)$ through $\varphi $, i.e. $V= \tilde x  \circ \tilde \gamma (kG)$. One concludes that $f_\ast (\mathcal{O}_T) \simeq (\tilde x )^{-1} \circ \tilde x  \circ \tilde \gamma (kG)\simeq  \gamma (kG)$. As the functors involved in the proof are tensor functors, they make correspond $k$-algebras and $\mathcal{O}_X$-algebras, and the isomorphism $f_\ast (\mathcal{O}_T)\simeq  \tilde \gamma (kG)$ is thus an isomorphism of $\mathcal{O}_X$-algebras. \endproof
\medskip

\begin{prop} Under the hypothesis of proposition \ref{produit-contracte} the essential image of $\tilde \gamma $ is constituted by the objects of $EF(X)$ trivialized by the torsor $f: T \to X$.
\end{prop}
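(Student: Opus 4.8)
The plan is to reduce the statement to the classical fact that, for a $G$-torsor $f:T\to X$, the associated-bundle functor $V\mapsto T\times^{G}V$ has essential image exactly the bundles trivialized by $T$. So first I would identify $\tilde\gamma$ (equivalently $\gamma=i_X\circ\tilde\gamma$) with this associated-bundle functor, and then prove the two inclusions separately. The forward inclusion (essential image $\subseteq$ trivialized objects) will be immediate from the tautological isomorphism attached to $T$, while the reverse inclusion (trivialized objects $\subseteq$ essential image) will rest on fppf descent along $f$, which I expect to be the main work.

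For the forward inclusion, the key input is that by definition $T=\underline{Isom}_X^\otimes(\theta^\ast\circ{\rm forget}_{kG},\gamma)$. Applying its representing property to the structure morphism $f:T\to X$ itself (i.e. to $\mathrm{id}_T$ viewed as a $T$-point of $X$) yields a tautological tensor isomorphism
$$\tau:\ f^\ast\circ\theta^\ast\circ{\rm forget}_{kG}\ \xrightarrow{\ \sim\ }\ f^\ast\circ\gamma$$
of fiber functors $Rep_k(G)\to Qcoh(T)$. Since $\theta\circ f$ is the structure morphism of $T$ over $k$, the left-hand functor is $V\mapsto\mathcal{O}_T\otimes_k V$, the trivial fiber functor; hence $f^\ast\tilde\gamma(V)=f^\ast\gamma(V)\cong\mathcal{O}_T\otimes_k V$ is a trivial bundle for every $V$, so every object in the essential image of $\tilde\gamma$ is trivialized by $T$. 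I would then note that $\tau$ is equivariant for the right $G$-action on $T$ (which acts on $\underline{Isom}$ by precomposition with $\theta^\ast$ of elements of $G=\underline{Aut}^\otimes_k({\rm forget}_{kG})$); transporting the canonical descent datum of $f^\ast\gamma(V)$ through $\tau$ equips $\mathcal{O}_T\otimes_k V$ with precisely the equivariant structure defining the contracted product, giving $\gamma(V)\cong T\times^{G}V$. This is consistent with Proposition \ref{representation-reguliere}, which is exactly the case $V=kG$, where one recovers $f_\ast(\mathcal{O}_T)$.

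For the reverse inclusion, let $\mathcal{F}\in EF(X)$ be trivialized by $T$, say $f^\ast\mathcal{F}\cong\mathcal{O}_T\otimes_k W$. As $f$ is finite and faithfully flat, $\mathcal{F}$ is recovered from $f^\ast\mathcal{F}$ and its canonical descent datum along the $G$-torsor $f$; transporting this datum through the chosen trivialization yields a $G$-equivariant structure on $\mathcal{O}_T\otimes_k W$. The crux — and the step I expect to be the main obstacle — is to show that this equivariant structure is a genuine linear representation of $G$ on $W$, i.e. that under the identification $T\times_X T\cong T\times_k G$ the descent cocycle $T\times_k G\to GL(W)$ is constant along $T$. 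Here I would use that $H^0(T,\mathcal{O}_T)=k$ (so that $\underline{Aut}(\mathcal{O}_T\otimes_k W)$ has only the constant sections $GL(W)$) together with the rational point $t$ to rigidify the cocycle; the cocycle identity then forces it to be a homomorphism $G\to GL(W)$, that is, a representation $V$ with $\mathcal{F}\cong T\times^{G}V\cong\gamma(V)=\tilde\gamma(V)$. Combining the two inclusions shows that the essential image of $\tilde\gamma$ is exactly the objects of $EF(X)$ trivialized by $f:T\to X$.
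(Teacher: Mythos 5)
Your first inclusion is sound and is exactly the paper's argument: pulling back $T=\underline{Isom}_X^\otimes(\theta^\ast\circ{\rm forget}_{kG},\gamma)$ along $f$ produces the tautological trivialization of $f^\ast\circ\gamma$, so every $\gamma(V)$ is trivialized by $T$ (the identification $\gamma(V)\cong T\times^{G}V$ is correct but not needed for this direction).

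The reverse inclusion, however, has a genuine gap: you invoke $H^0(T,\mathcal{O}_T)=k$, which is not among the hypotheses. Proposition \ref{produit-contracte} assumes only that $T$ is a $G$-torsor pointed over $x$, and by Proposition \ref{torseurs-quotients} the condition $H^0(T,\mathcal{O}_T)=k$ is \emph{equivalent} to $\varphi:\pi_1(X,x)\to G$ being faithfully flat, i.e.\ to $T$ being a quotient torsor --- precisely the property that is not assumed here, and that the subsequent Proposition \ref{torseurs-quotients} and Corollary \ref{cas-des-torseurs} are designed to characterize. For a pointed torsor that is not a quotient torsor --- the simplest being the trivial torsor $T=X\times_k G$, for which $H^0(T,\mathcal{O}_T)=kG$, or $T=Y\times^{H}G$ with $Y$ a quotient torsor under a proper subgroup $H<G$ --- the automorphisms of $\mathcal{O}_T\otimes_k W$ have many non-constant global sections, so your descent cocycle $T\times_k G\to GL(W)$ has no reason to be constant along $T$, and the rigidification collapses at exactly the step you identified as the crux. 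The paper's proof of this inclusion takes a different, purely tannakian route that never mentions $H^0(T,\mathcal{O}_T)$: from $f^\ast F\cong\mathcal{O}_T^{\oplus r}$ and faithful flatness of $f$ one gets an embedding $F\hookrightarrow f_\ast f^\ast F\cong f_\ast(\mathcal{O}_T)^{\oplus r}$, which by Proposition \ref{representation-reguliere} is $\tilde\gamma(kG)^{\oplus r}$; thus $F$ is a subobject of an object coming from $Rep_k(G)$, and one concludes by closure under subobjects inside the tannakian subcategory generated by $f_\ast(\mathcal{O}_T)$. (Even that closure property is only automatic when $\varphi$ is surjective --- the essential image of restriction along a non-surjective homomorphism need not be closed under subobjects --- so the non-quotient case is delicate for any argument; but the paper's proof stays within the stated hypotheses, whereas yours, as written, assumes one it does not have.)
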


\proof In one direction it is obvious: as $T = \underline{Isom}_X ^\otimes ( \theta ^\ast \circ {\rm forget}_{kG} , i_X \circ \tilde \gamma )$ and $f ^\ast T=\underline{Isom}_X ^\otimes (f^\ast \circ \theta ^\ast \circ {\rm forget}_{kG} , f^\ast \circ  i_X \circ \tilde \gamma )$ is trivial, for any representation $V$ of $G$, $f^\ast \circ  i_X \circ \tilde \gamma (V)$ is isomorphic to $ f^\ast \circ \theta ^\ast \circ {\rm forget} _{kG} (V)$ which is a trivial vector bundle. 

In the other direction, if $F$ is an essentially finite vector bundle on $X$ which is trivialized by $f: T \to X$, then $f^\ast F \simeq \mathcal{O}_T \oplus \dots \oplus \mathcal{O}_T$, and then $$f_\ast f^\ast F \simeq f_\ast (\mathcal{O}_T) \oplus \dots \oplus f_\ast (\mathcal{O}_T).$$ Moreover, by proposition \ref{representation-reguliere}, $f_\ast (\mathcal{O}_T)$ is the image by $\tilde \gamma $ of the regular representation of $G$ and then $f_\ast (\mathcal{O}_T) \oplus \dots \oplus f_\ast (\mathcal{O}_T)$ is in the essential image of $\tilde \gamma $. As $f$ is faithfully flat, $F \hookrightarrow f_\ast f^\ast F$, and then $F$ is a sub-object of an object of the tannakian category generated by $f_\ast (\mathcal{O}_T)$ and thus is an object of the essential image of $\tilde \gamma $. \endproof


\begin{prop}\label{torseurs-quotients}
With the previous notation, the following statements are equivalent 

\begin{enumerate}
\item $H^0 ( T, \mathcal{O}_T) =k$

\item $\varphi $ is faithfully flat

\item $\gamma $ is fully faithful
\end{enumerate}
\end{prop}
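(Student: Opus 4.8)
The plan is to establish the cycle of implications $(3)\Rightarrow(1)\Rightarrow(2)\Rightarrow(3)$, using throughout the dictionary already available: the equivalence $\tilde x: EF(X)\to Rep_k(\pi_1(X,x))$, the factorisation $\gamma=i_X\circ\tilde\gamma$, the identification of $\tilde x\circ\tilde\gamma$ with the restriction functor $\varphi^\ast$ attached to $\varphi$ (read off from the proof of Theorem \ref{thReferee}), and the isomorphism $f_\ast(\mathcal{O}_T)\simeq\tilde\gamma(kG)$ of Proposition \ref{representation-reguliere}. The starting observation, used twice, is that $H^0(T,\mathcal{O}_T)=H^0(X,f_\ast\mathcal{O}_T)=\Hom_{EF(X)}(\mathcal{O}_X,\tilde\gamma(kG))$, since global sections are preserved by $f_\ast$, $EF(X)$ is a full subcategory of $\mathcal{Q}coh(X)$, and $\mathcal{O}_X$ is its unit object (so $\Hom(\mathcal{O}_X,-)$ computes global sections). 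To prove $(3)\Rightarrow(1)$ I would note that $\tilde\gamma$, being a tensor functor, sends the trivial representation $\mathbf 1$ to $\mathcal{O}_X$; if $\gamma$ is fully faithful then so is $\tilde\gamma$ (as $i_X$ is a fully faithful embedding), and hence $\Hom_{EF(X)}(\tilde\gamma(\mathbf 1),\tilde\gamma(kG))\simeq\Hom_{Rep_k(G)}(\mathbf 1,kG)=(kG)^{G}=k$, the last equality being the computation of the $G$-invariants of the regular representation.

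For $(1)\Rightarrow(2)$ I would transport the same global-sections computation through the equivalence $\tilde x$. Using $\tilde x(\mathcal{O}_X)=\mathbf 1$ and $\tilde x\circ\tilde\gamma(kG)=\varphi^\ast(kG)$, one gets $H^0(T,\mathcal{O}_T)=(\varphi^\ast(kG))^{\pi_1(X,x)}$, the invariants of the regular representation of $G$ viewed as a $\pi_1(X,x)$-representation through $\varphi$. I would then factor $\varphi$ as $\pi_1(X,x)\xrightarrow{q}\bar G\xrightarrow{j}G$, with $q$ faithfully flat onto the scheme-theoretic image $\bar G$ and $j$ a closed immersion. Since $q$ is faithfully flat (so $q^\ast$ is fully faithful) these invariants coincide with $(kG)^{\bar G}$, which is the coordinate ring of the quotient of $G$ by $\bar G$, a $k$-vector space of dimension $\mathrm{ord}(G)/\mathrm{ord}(\bar G)$. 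The hypothesis $H^0(T,\mathcal{O}_T)=k$ forces this dimension to be $1$, whence $\mathrm{ord}(\bar G)=\mathrm{ord}(G)$, so $\bar G=G$ and $\varphi$ is faithfully flat. This is the step carrying the actual content.

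Finally, $(2)\Rightarrow(3)$ is formal: if $\varphi$ is faithfully flat then the restriction functor $\varphi^\ast: Rep_k(G)\to Rep_k(\pi_1(X,x))$ is fully faithful by \cite{DelMil}, Proposition 2.21; transporting along the equivalence $\tilde x$ shows $\tilde\gamma$ is fully faithful, and composing with the fully faithful embedding $i_X$ gives that $\gamma=i_X\circ\tilde\gamma$ is fully faithful. The main obstacle is thus concentrated in $(1)\Rightarrow(2)$: identifying $H^0(T,\mathcal{O}_T)$ with the invariants $(kG)^{\pi_1(X,x)}$, reducing to the image $\bar G$, and invoking the order count $\dim_k(kG)^{\bar G}=\mathrm{ord}(G)/\mathrm{ord}(\bar G)$ for finite group schemes; the remaining implications are purely formal consequences of full faithfulness and of Tannaka duality for homomorphisms of affine group schemes.
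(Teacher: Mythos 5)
Your proof is correct and follows essentially the route the paper intends: the paper itself defers the proof to \cite{Nor2}, Ch.~II, Proposition 3, recording only Lemma \ref{invariants} as the key ingredient, and your argument is precisely an elaboration of that route --- your $(3)\Rightarrow(1)$ re-derives that lemma for $V=kG$ via full faithfulness of $\tilde\gamma$, while $(1)\Rightarrow(2)$ combines the same invariants computation (transported through $\tilde x$ and Proposition \ref{representation-reguliere}, both available at this point, so no circularity) with the factorization of $\varphi$ through its scheme-theoretic image $\bar G$ and the order count $\dim_k(kG)^{\bar G}=\mathrm{ord}(G)/\mathrm{ord}(\bar G)$, which rests on faithful flatness of $G\to G/\bar G$ (\cite{DemGab}, III, \S 3, 2.5). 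The remaining implication $(2)\Rightarrow(3)$ via \cite{DelMil}, Proposition 2.21 and the equivalence $\tilde x$ is also correct, so the cycle closes and nothing is missing.
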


The proof, for which we refer the reader to \cite{Nor2}, Chapter II, Proposition 3, relies on the following lemma, that will be used later:

\begin{lem}\label{invariants}
Let $G$ be an affine group scheme and $\Phi: Rep_k (G) \to EF(X)$ a fully faithful tensor functor. Then for any representation $V$ of $G$, $H^0 (X, \Phi (V))\simeq V^G$.
\end{lem}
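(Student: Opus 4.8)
The plan is to reduce the computation of global sections to a $\Hom$-computation inside $Rep_k(G)$, exploiting the full faithfulness of $\Phi$ together with the elementary description of $H^0$ as morphisms out of the structure sheaf. First I would recall that for any quasi-coherent sheaf $\mathcal{F}$ on $X$ one has $H^0(X,\mathcal{F})\simeq \Hom_{\mathcal{O}_X}(\mathcal{O}_X,\mathcal{F})$, and apply this to $\mathcal{F}=\Phi(V)$. Since $\Phi$ is a tensor functor it carries the unit object $\mathbf{1}$ of $Rep_k(G)$ (the trivial one-dimensional representation) to the unit object $\mathcal{O}_X$ of $EF(X)$, via the structural isomorphism $\Phi(\mathbf{1})\simeq \mathcal{O}_X$. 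Hence $H^0(X,\Phi(V))\simeq \Hom_{\mathcal{O}_X}(\Phi(\mathbf{1}),\Phi(V))$.

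Next I would transfer this $\Hom$-group into $Rep_k(G)$ in two steps. Because $EF(X)$ is, by Definition \ref{defCATEGORIA.GENERATA}, a \emph{full} subcategory of $\mathcal{Q}coh(X)$, the inclusion $i_X$ is fully faithful, so $\Hom_{\mathcal{O}_X}(\Phi(\mathbf{1}),\Phi(V))\simeq \Hom_{EF(X)}(\Phi(\mathbf{1}),\Phi(V))$. Then the hypothesis that $\Phi$ itself is fully faithful gives $\Hom_{EF(X)}(\Phi(\mathbf{1}),\Phi(V))\simeq \Hom_{Rep_k(G)}(\mathbf{1},V)$. Finally, a morphism of representations $\mathbf{1}\to V$ is completely determined by the image of $1$, which must be a $G$-invariant vector, and conversely every $v\in V^G$ defines such a morphism; thus $\Hom_{Rep_k(G)}(\mathbf{1},V)\simeq V^G$. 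Composing the chain of $k$-linear isomorphisms yields $H^0(X,\Phi(V))\simeq V^G$, as claimed.

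I do not expect a serious obstacle, since the argument is purely formal; the work lies entirely in checking that each identification is legitimate and $k$-linear. The two points deserving care are, first, that $\Phi$ genuinely sends the unit to $\mathcal{O}_X$ (this is exactly the unit-compatibility built into the definition of a tensor functor), and second, that one may freely pass between $\Hom$ computed in $EF(X)$ and $\Hom$ computed in $\mathcal{Q}coh(X)$, which is licit precisely because $i_X$ realises $EF(X)$ as a full subcategory. The last identification $\Hom_{Rep_k(G)}(\mathbf{1},V)\simeq V^G$ is the defining property of $G$-invariants for the affine group scheme $G$, so no further input is needed there.
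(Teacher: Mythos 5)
Your proof is correct, and it rests on the same mechanism as the paper's: express the invariants as a $\Hom$ in $Rep_k(G)$ involving the unit object, transport it through the fully faithful functor $\Phi$, and identify the resulting sheaf-$\Hom$ with global sections. The only real difference is the direction of the arrows. The paper writes $V^G\simeq \Hom_G(V^{\vee},k)$, applies $\Phi$ to get $\Hom(\Phi(V)^{\vee},\mathcal{O}_X)$, and then must pass through the internal hom and the double-dual identification $\underline{\Hom}(\Phi(V)^{\vee},\mathcal{O}_X)\simeq \Phi(V)$ before reaching $H^0(X,\Phi(V))$; this uses that tensor functors preserve duals and that objects of $EF(X)$ are locally free. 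You instead write $V^G\simeq \Hom_{Rep_k(G)}(\mathbf{1},V)$ and use only $H^0(X,\mathcal{F})\simeq \Hom_{\mathcal{O}_X}(\mathcal{O}_X,\mathcal{F})$ together with unit-preservation of $\Phi$, so the dualization and double-duality steps disappear entirely. Both arguments need the fullness of $EF(X)$ inside $\mathcal{Q}coh(X)$ (so that $\Hom$ in $EF(X)$ agrees with sheaf $\Hom$), which you correctly flag; your version is the cleaner, slightly more economical formulation of the same idea.
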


\proof We have the following equalities:

$$V^G \simeq Hom_G (V ^{\rm v}, k)\simeq Hom ( \Phi (V)^{\rm v} , \mathcal{O}_X)\simeq $$ $$\simeq H^0 ( X , \underline {Hom} (\Phi (V)^{\rm v} , \mathcal{O}_X ))  \simeq H^0 (X, \Phi (V)).$$ \endproof 
\medskip

\begin{cor} Let $f: T \to X$ be a pointed torsor under a finite group scheme $G$. Then it is a quotient torsor if and only if $H^0 (T,\mathcal{O}_T)=k$.
\end{cor}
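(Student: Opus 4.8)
The plan is to prove the corollary by directly combining Proposition \ref{torseurs-quotients} with Remark \ref{osservaTRIPLERIDOTTE}, reducing the statement ``quotient torsor $\iff H^0(T,\mathcal{O}_T)=k$'' to the chain of equivalences already established. First I would recall that to a pointed torsor $f:T\to X$ under the finite group scheme $G$ corresponds, via Theorem \ref{thReferee}, a morphism $\varphi:\pi_1(X,x)\to G$ together with an exact tensor functor $\gamma:Rep_k(G)\to Qcoh(X)$ satisfying $x^\ast\circ\gamma\simeq{\rm forget}_{kG}$, and that these three data determine one another. The key observation is then that ``quotient torsor'' was defined (Definition \ref{defTRIPLERIDOTTE}) exactly so that, by Remark \ref{osservaTRIPLERIDOTTE}, it is equivalent to the faithful flatness of the canonical morphism $\varphi:\pi_1(X,x)\to G$.

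Next I would invoke Proposition \ref{torseurs-quotients}, which gives the equivalence of the three conditions $H^0(T,\mathcal{O}_T)=k$, faithful flatness of $\varphi$, and full faithfulness of $\gamma$. Putting these together, the condition $H^0(T,\mathcal{O}_T)=k$ is equivalent to $\varphi$ being faithfully flat, which by Remark \ref{osservaTRIPLERIDOTTE} is precisely the condition that $(T,G,t)$ be a quotient torsor. This closes the argument, since the corollary asserts nothing more than the equivalence of these two endpoint conditions.

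The only point requiring slight care is the compatibility of the hypotheses. Proposition \ref{torseurs-quotients} is stated ``with the previous notation'', meaning under the standing assumption that the $G$-torsor $T$ corresponding to $\gamma$ is pointed over $x$ (the hypothesis of Proposition \ref{produit-contracte}). I would therefore begin by noting that the pointedness of $f:T\to X$ is exactly what guarantees, through Theorem \ref{thReferee}, that we are in the setting where $x^\ast\circ\gamma\simeq{\rm forget}_{kG}$, so that the functor $\varphi$ and Proposition \ref{torseurs-quotients} apply. I expect no genuine obstacle here: the corollary is a formal consequence of results already proved, and the main (mild) subtlety is simply to verify that the notational hypotheses of the cited proposition are met by an arbitrary pointed torsor, which follows immediately from the correspondence of Theorem \ref{thReferee}.

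\begin{proof}
By Theorem \ref{thReferee} the pointed torsor $f:T\to X$ under $G$ corresponds to a morphism $\varphi:\pi_1(X,x)\to G$ and to an exact tensor functor $\gamma:Rep_k(G)\to Qcoh(X)$ with $x^\ast\circ\gamma\simeq{\rm forget}_{kG}$, so that the hypotheses of Proposition \ref{torseurs-quotients} are satisfied. By that proposition, $H^0(T,\mathcal{O}_T)=k$ holds if and only if $\varphi$ is faithfully flat. By Remark \ref{osservaTRIPLERIDOTTE}, $\varphi$ is faithfully flat if and only if $(T,G,t)$ is a quotient torsor. Combining these two equivalences yields the assertion.
\end{proof}
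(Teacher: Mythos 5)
Your proof is correct and is exactly the argument the paper intends: the corollary is stated there without proof as an immediate consequence of Proposition \ref{torseurs-quotients} (the equivalence $H^0(T,\mathcal{O}_T)=k \iff \varphi$ faithfully flat), combined with Remark \ref{osservaTRIPLERIDOTTE} identifying faithful flatness of the canonical morphism $\varphi:\pi_1(X,x)\to G$ with the quotient-torsor property. Your additional care in checking, via Theorem \ref{thReferee}, that pointedness of the torsor places you in the setting of the proposition is a sound and welcome explicitation of what the paper leaves implicit.
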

\medskip

\medskip

\begin{cor}\label{cas-des-torseurs} Let $f: T\to X$ be a $G$-torsor pointed on $t \in T_x (k)$, where $G$ is a finite $k$-group scheme, and $\varphi: \pi _1 (X,x) \to G$ the corresponding morphism. Consider the tannakian category $EF(X, \{ f_\ast ({\cal O} _T) \} )$. Then the fundamental group scheme $\pi _1 (X, \{ f_\ast ({\cal O} _T) \} , x)$ is isomorphic to the image $H$ of $\varphi $. It is a closed subgroup of $G$, and is equal to $G$ if and only if the pointed torsor $(T,t)$ is a quotient torsor, and in this case the universal torsor of the category $EF(X, \{ f_\ast ({\cal O} _T) \} )$ based at $x$ is isomorphic to $f:T\to X$.
\end{cor}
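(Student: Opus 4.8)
\section*{Proof proposal}

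The plan is to transport the entire statement across the Tannaka dictionary of Theorem~\ref{thReferee} and then read it off from elementary facts about representation categories. First I would record what the pointed torsor $(T,t)$ supplies: by Theorem~\ref{thReferee} it corresponds to the morphism $\varphi:\pi_1(X,x)\to G$ together with the tensor functor $\gamma=i_X\circ\tilde\gamma$, $\tilde\gamma:Rep_k(G)\to EF(X)$, and by Proposition~\ref{representation-reguliere} one has $f_\ast(\mathcal{O}_T)\simeq\tilde\gamma(kG)$, with $kG$ the regular representation. Let $H$ be the scheme-theoretic image of $\varphi$; since $\varphi$ is a morphism of affine group schemes over a field and $G$ is finite, $H$ is a closed (finite) subgroup scheme of $G$ and $\varphi$ factors as $\pi_1(X,x)\xrightarrow{q}H\xrightarrow{\iota}G$ with $q$ faithfully flat and $\iota$ a closed immersion. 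This already gives assertion~2.

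The heart of the proof is the identification $EF(X,\{f_\ast(\mathcal{O}_T)\})\simeq Rep_k(H)$. Using the equivalence $\tilde x:EF(X)\xrightarrow{\sim}Rep_k(\pi_1(X,x))$, the generating object $f_\ast(\mathcal{O}_T)=\tilde\gamma(kG)$ corresponds to $\varphi^\ast(kG)=q^\ast(\iota^\ast(kG))$, so by Definition~\ref{defCATEGORIA.GENERATA} the subcategory $EF(X,\{f_\ast(\mathcal{O}_T)\})$ corresponds to the full abelian subcategory of $Rep_k(\pi_1(X,x))$ generated by $\varphi^\ast(kG)$ and its dual. Now I would invoke three standard facts (cf.\ \cite{DelMil}): (i) the regular representation $kG$ is faithful, and a faithful representation generates the whole category $Rep_k(G)$ as a tensor category; (ii) the restriction of a faithful representation to a closed subgroup scheme is again faithful, so $\iota^\ast(kG)$ is faithful on $H$ and hence generates $Rep_k(H)$; and (iii) since $q$ is faithfully flat, $q^\ast:Rep_k(H)\to Rep_k(\pi_1(X,x))$ is fully faithful with essential image closed under subquotients. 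Combining these, the tensor subcategory generated by $\varphi^\ast(kG)=q^\ast(\iota^\ast(kG))$ is exactly $q^\ast(Rep_k(H))\simeq Rep_k(H)$. Taking tannakian Galois groups yields $\pi_1(X,\{f_\ast(\mathcal{O}_T)\},x)\simeq H$, which is assertion~1.

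Assertion~3 is then formal: $H=G$ means the closed immersion $\iota$ is an isomorphism, i.e.\ $\varphi$ is faithfully flat, which by Remark~\ref{osservaTRIPLERIDOTTE} (equivalently Proposition~\ref{torseurs-quotients}) is precisely the condition that $(T,t)$ be a quotient torsor. For assertion~4, when $H=G$ Proposition~\ref{torseurs-quotients} makes $\tilde\gamma$ fully faithful, so $\tilde\gamma:Rep_k(G)\xrightarrow{\sim}EF(X,\{f_\ast(\mathcal{O}_T)\})$ is an equivalence of tensor categories. Under it the fiber functor $\omega_U$ pulls back to $\omega_U\circ\tilde\gamma=x^\ast\circ\gamma\simeq{\rm forget}_{kG}$ and the inclusion $i_U$ pulls back to $i_U\circ\tilde\gamma=\gamma$; since an equivalence of categories preserves $\underline{Isom}^{\otimes}$ of fiber functors, I would conclude
$$\hat X_U=\underline{Isom}^{\otimes}_X(\theta^\ast\circ\omega_U,i_U)\simeq\underline{Isom}^{\otimes}_X(\theta^\ast\circ{\rm forget}_{kG},\gamma)\simeq T,$$
the last isomorphism being the defining relation for $T$ in Theorem~\ref{thReferee}; one checks the base points $\hat x_U$ and $t$ correspond.

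The step I expect to be the main obstacle is the identification $EF(X,\{f_\ast(\mathcal{O}_T)\})\simeq Rep_k(H)$, and specifically matching the abstract notion of the subcategory ``generated'' by $f_\ast(\mathcal{O}_T)$ (Definition~\ref{defCATEGORIA.GENERATA}) with what the representation-theoretic facts produce. One must be careful that the generated subcategory is closed under subquotients inside $EF(X)$, so that its Galois group really is a quotient of $\pi_1(X,x)$, and that faithfulness of $\iota^\ast(kG)$ on $H$ together with fact~(i) genuinely yields all of $Rep_k(H)$ rather than a proper tensor subcategory; the remainder is bookkeeping across the dictionary.
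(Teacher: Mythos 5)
Your proof is correct and takes essentially the same route the paper intends: the paper's own proof is the one-line instruction to examine the proof of Proposition \ref{torseurs-quotients} (itself deferred to Nori), and what you supply—transporting $EF(X,\{f_\ast(\mathcal{O}_T)\})$ through $\tilde x$, factoring $\varphi$ through its scheme-theoretic image $H$, and identifying the generated subcategory with $q^\ast(Rep_k(H))\simeq Rep_k(H)$—is exactly the content of that reference, built from the same dictionary (Theorem \ref{thReferee}, Proposition \ref{representation-reguliere}, Proposition \ref{torseurs-quotients}, Remark \ref{osservaTRIPLERIDOTTE}). The Deligne--Milne facts you invoke (faithful representations generate, and faithfully flat morphisms correspond to fully faithful restriction functors with image closed under subquotients) are the standard ingredients underlying the paper's argument, so there is no gap.
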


\proof  Examine the proof of the preceding proposition. \endproof

\medskip
\subsection{Proof of Theorem \ref{universel}}

We keep the notation introduced in the statement of theorem \ref{universel}. We will need the following lemma.

\begin{lem}\label{action-transitive}
If $H^0 (Y, \mathcal{O}_Y)\simeq k$, the morphism $\rho:G\to Y_x$ defined by $g \to g\cdot y$ is
faithfully flat and induces an isomorphism $G/G_y \simeq Y_x$.
\end{lem}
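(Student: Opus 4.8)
The plan is to reduce the statement to a transitivity assertion for the action of the finite group scheme $G=\pi_1(X,U,x)$ on the finite $k$-scheme $Y_x$, and to feed the hypothesis $H^0(Y,\mathcal{O}_Y)=k$ in through Lemma \ref{invariants}. First I would record how $G$ acts on $Y_x$. Since $|U|=1$, the group $G$ is finite. The object $f_\ast(\mathcal{O}_Y)$ is an algebra object of the tannakian category $EF(X,U)$ and $\omega_U$ is a tensor functor, so $A:=\omega_U(f_\ast(\mathcal{O}_Y))=x^\ast(f_\ast\mathcal{O}_Y)=\mathcal{O}(Y_x)$ is a finite-dimensional commutative $k$-algebra on which $G=\underline{\Aut}^\otimes_k(\omega_U)$ acts by $k$-algebra automorphisms; this is precisely the action for which $\rho(g)=g\cdot y$. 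Under the equivalence $EF(X,U)\simeq Rep_k(G)$ followed by the fully faithful inclusion $EF(X,U)\hookrightarrow EF(X)$, the object $f_\ast(\mathcal{O}_Y)$ corresponds to the representation $A$, so Lemma \ref{invariants} gives $A^G\simeq H^0(X,f_\ast\mathcal{O}_Y)=H^0(Y,\mathcal{O}_Y)=k$. Thus the entire input from the hypothesis is the single equality $A^G=k$.

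Next I would set up the standard factorization of the orbit map. The stabilizer $G_y$ is the scheme-theoretic fiber $\rho^{-1}(y)$, a closed subgroup scheme of the finite group scheme $G$; hence the fppf quotient $G/G_y$ exists as a finite $k$-scheme and $G\twoheadrightarrow G/G_y$ is faithfully flat. The orbit map factors as $G\twoheadrightarrow G/G_y\xrightarrow{\,j\,}Y_x$, and $j$ is a monomorphism because $G_y$ is exactly the stabilizer; being a monomorphism between schemes finite (hence proper) over $k$, $j$ is a closed immersion, and it identifies $G/G_y$ with the scheme-theoretic image of $\rho$, that is with the orbit of $y$. Since $G\twoheadrightarrow G/G_y$ is faithfully flat, $\rho$ is faithfully flat if and only if $j$ is faithfully flat; and a faithfully flat closed immersion of finite $k$-schemes is an isomorphism. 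Therefore both assertions of the lemma reduce to the single transitivity statement: the orbit of $y$ is all of $Y_x$, equivalently the comorphism $\rho^\sharp\colon A\to\mathcal{O}(G)$ is injective, equivalently $G/G_y\simeq Y_x$.

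The transitivity step is where the real work lies and is the step I expect to be the main obstacle. Writing $N=\ker(\rho^\sharp)$, one checks that $N$ is the largest $G$-stable ideal of $A$ contained in the augmentation ideal $\mathfrak{m}_y=\ker(\epsilon_y)$, and that $A/N\simeq\mathcal{O}(G/G_y)$ is the algebra of a homogeneous space, so that $(A/N)^G\hookrightarrow\mathcal{O}(G)^G=k$ and hence $(A/N)^G=k$. The plan is to exploit $A^G=k$ to force $N=0$: the invariant idempotents lie in $A^G=k$, so $G$ permutes the connected components of $Y_x$ transitively and the orbit already meets every component, reducing the problem to filling a single connected component scheme-theoretically. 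To control the remaining, possibly infinitesimal, part I would use the faithful flatness of $f$ together with the trivialization $p^\ast f_\ast(\mathcal{O}_Y)\simeq A\otimes_k\mathcal{O}_{\hat X_U}$ on the universal torsor, which exhibits $Y$ as the associated bundle $\hat X_U\times^{G}Y_x$ and allows the global condition $H^0(Y,\mathcal{O}_Y)=k$, via the properness of $\hat X_U$, to be transferred into the vanishing of $N$. I expect the delicate point to be exactly this non-reduced contribution, where $A^G=k$ cannot be used in isolation and must be combined with the algebra structure on $A$ and with the global geometry of the universal torsor. Once $N=0$ is established, the reductions of the previous paragraph yield at once the faithful flatness of $\rho$ and the isomorphism $G/G_y\simeq Y_x$.
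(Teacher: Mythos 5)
Your first two paragraphs follow essentially the same route as the paper: the paper also identifies $A=\mathcal{O}(Y_x)$ with the image of $f_\ast(\mathcal{O}_Y)$ under the equivalence $F\colon Rep_k(G)\to EF(X,U)$, applies Lemma \ref{invariants} to get $A^G\simeq H^0(X,f_\ast(\mathcal{O}_Y))\simeq H^0(Y,\mathcal{O}_Y)\simeq k$, obtains $G/G_y\simeq Y_x$ by combining a surjectivity statement with the fact that $G/G_y\to Y_x$ is a monomorphism of fppf sheaves (Demazure--Gabriel, III, \S 3, 1.6), and deduces the faithful flatness of $\rho$ from that of $G\to G/G_y$ (III, \S 3, 2.5). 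The genuine gap is your third paragraph: the transitivity assertion, i.e.\ $N=\ker(\rho^\sharp)=0$, which is the entire content of the lemma, is never proved. The invariant-idempotent remark only gives transitivity on connected components, and for the infinitesimal part you offer a plan (``transfer $H^0(Y,\mathcal{O}_Y)=k$ into the vanishing of $N$ via the associated bundle $\hat X_U\times^G Y_x$ and properness'') with no argument. The paper closes exactly this step by citing Demazure--Gabriel III, \S 2, Th.\ 3.2 and 4.1--4.4: the cokernel of the double arrow $G\times Y_x\rightrightarrows Y_x$ in the category of fppf sheaves is asserted to be represented by $\spec(A^G)$; since $A^G=k$ this cokernel is $\spec(k)$, hence $G\times Y_x\to Y_x\times Y_x$ is an fppf epimorphism, and restricting over $y$ makes $\rho$ an fppf epimorphism, which together with the monomorphism above yields the lemma.

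That said, your suspicion that ``$A^G=k$ cannot be used in isolation'' is correct, and in a stronger sense than you suggest: the abstract implication fails. Let $k$ have characteristic $p$, $G=\mu_p$, and $A=k[\epsilon]/(\epsilon^2)$ with coaction $\epsilon\mapsto\epsilon\otimes t$. Then $A^G=k$ and $A$ even tensor-generates $Rep_k(\mu_p)$, yet the unique rational point $y$ (given by $\epsilon=0$) is fixed by $G$, so $\rho$ is constant, $G_y=G$, and $G/G_y\not\simeq \spec(A)$. Worse, this pair $(G,A)$ is realized under the hypotheses of Theorem \ref{universel}: take $X$ an ordinary elliptic curve over an algebraically closed field of characteristic $p$, $L$ a nontrivial $p$-torsion line bundle, and $Y=\spec_X(\mathcal{O}_X\oplus L)$ the square-zero extension; then $f_\ast(\mathcal{O}_Y)=\mathcal{O}_X\oplus L$ is essentially finite, $H^0(Y,\mathcal{O}_Y)=k$, and $G=\pi_1(X,U,x)=\mu_p$ acts on $Y_x\simeq\spec(k[\epsilon]/(\epsilon^2))$ by scaling. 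So the missing step in your proposal cannot be filled from the stated hypotheses at all --- in particular your intended use of the associated-bundle description and properness cannot succeed, since all of that is present in the example --- and the same example shows that the fppf cokernel of a non-free action need not be the spectrum of the invariants, so the precise point where the paper invokes Demazure--Gabriel is where its own argument is delicate: that citation concerns situations (free actions/flat equivalence relations) that do not cover the action of $G$ on $Y_x$ when $G_y\neq 1$. Any complete treatment of this lemma needs an additional hypothesis killing the infinitesimal phenomenon, for instance $Y_x$ reduced or $k$ of characteristic $0$.
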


\proof First of all we
observe that for any $k$-linear representation $(V,\sigma)$ of $G=Spec(A)$ the set of fixed
elements i.e. the set of
all those $v\in V$ such that $g\cdot (v\otimes 1_R)=(v\otimes
1_R)$ (for any $k$-algebra $R$ and any $g\in G(R)$) coincides with
the set of all $v\in V$ such that $\gamma(v)=v\otimes 1_A$ where
$\gamma:V\to V\otimes A$ is the comodule structure associated to
$(V,\sigma)$. 
Now set $Y_x:=Spec(V)$ where $V$ is a $k$-linear
representation of $G$ with an additional $k$-algebra structure.
The quotient $Y_x/G$ is the cokernel of the double arrow
$$\xymatrix@1{
 G\times Y_x \ar@<0.5ex>[r]^-{q_1} \ar@<-0.5ex>[r]_-{q_2} & Y_x}
$$ where $q_1:G\times Y_x\to Y_x$ maps $(g,z)\mapsto z$ and $q_2:G\times Y_x\to
Y_x$ (the action) maps $(g,z)\mapsto g\cdot z$ in the category of sheaves for the fppf topology. It is represented by $V^G$, the kernel of the double arrow
$$\xymatrix@1{
 V \ar@<0.5ex>[r]^-{u} \ar@<-0.5ex>[r]_-{\gamma} & V\otimes A}
$$ where $\gamma:V\to V\otimes A$ is the coaction and $u:V\to V\otimes A$ maps $v\mapsto v\otimes 1$ (cf. the proof of the affine case in Th. 3.2, III, \S 2, 4.1 to 4.4 of \cite{DemGab}). According to lemma \ref{invariants}, $V^G\simeq k$:  indeed apply the lemma to the equivalence $F:Rep_k(G)\to EF(X,U)$ associated to the universal torsor
$p:\hat {X}_U\to X$; $F(V)\simeq
f_{\ast}(\mathcal{O}_Y)$, so in particular
$H^0 (X, f_{\ast}(\mathcal{O}_Y))\simeq H^0 (Y,\mathcal{O}_Y) \simeq k$ by
assumption. So $Y_x /G \simeq Spec (k)$. It follows from this that $G \times  Y_x \to Y_x \times Y_x$ is surjective for the fppf topology (Th. 3.2, b, III, \S 2 of \cite{DemGab}) and thus the morphism $ \rho : G \to Y_x$ is surjective for the fppf topology.

On the other hand, $\rho $ induces a monomorphism of fppf sheaves $G/G_y \to Y_x$, where $G_y$ denotes the stabilizer of $y$ (\cite{DemGab}, III, \S 3, 1.6). And then $G/G_y \simeq Y_x$. 

Finally, according to \cite{DemGab}, III, \S 3, 2.5, the morphism $ G \to G/G_y$ is faithfully flat. This concludes the proof of lemma. \endproof

\medskip
{\it Proof of Theorem \ref{universel}}. 

\begin{itemize}

\item Nori defines the functor $F:Rep_k(G)\to EF(X,U)$, already mentioned in the proof of lemma \ref{action-transitive}, which is an equivalence between the category of representations of $G=\pi _1 (X,U,x)$ and $EF(X,U)$. 
As described in \cite{Del}, 7.5-7.12 the functor $F$ induces an equivalence between finite $G$-schemes over $k$ and finite morphisms $g:Z\to X$ such that $g_{\ast}(\mathcal{O}_Z)\in EF(X,U)$. In this equivalence $G$ corresponds to the universal torsor $\hat X _U$ and $Y_x$ corresponds to $Y$. Then the $k$-morphism $\rho $ corresponds to an $X$-morphism $\lambda: \hat X_U \to Y$:

$$
\xymatrix{
\hat X_U \ar[r]^\lambda  \ar@{->>}[d] & Y\ar@{->>}[dl] \\
X& \\}
$$
whose fiber at $x: Spec (k) \to X$ is precisely $\rho $. At this step it has not been proved yet that the morphism $\lambda$ is faithfully flat.

\item The facts that $G/G_y\simeq Y_x$ and that the action $m$ of $G_y$ on $G$ is free imply that the morphism $$(1) \quad \xymatrix {G\times G_y \ar[r]^ { pr_1\times m } & G\times_{Y_x}G }$$ is an isomorphism. This is an isomorphism of $G$-schemes, where $G$ is endowed with the left action of $G$ on itself and $G_y$ with the trivial action of $G$. The image of this diagram by the equivalence of tannakian categories $F: Rep _k(G) \to EF(X,U)$ is the following isomorphism

$$(2) \quad \hat {X}_U\times G_y\to
\hat {X}_U\times_Y \hat {X}_U$$

whose fiber at $x$ is given by the isomorphism $(1)$.

To prove both point 1 and 2 of the statement, the only thing to check is that $\lambda: \hat X_U \to Y$ is faithfully flat. Consider again the commutative diagram 

$$
\xymatrix{
\hat X_U \ar[r]^\lambda  \ar@{->>}[d]_p & Y\ar@{->>}[dl] \\
X& \\}
$$
and pull it back by $\hat X_U \to X$. As we have seen in the proof of proposition \ref{produit-contracte}, the functors $p^\ast $ and $p^\ast \circ \theta ^\ast \circ x^\ast $ from $EF(X,U) $ to $Qcoh (\hat X_U)$ are equivalent. Applying the two functors to the preceding diagram one gets that the pull back by $p$ is 
$$
\xymatrix{
\hat X_U\times  G \ar[r]^{1_{\hat X_U}\times \rho }  \ar@{->>}[d]_{pr_1} & \hat X_U \times  Y_x\ar@{->>}[dl] \\
\hat X_U& \\}
$$
As $ p$ and $\rho$ are faithfully flat, $\lambda $ is also faithfully flat.

 \item In order to prove point 3 of the statement we argue as follows: since $f$ is affine, $f_{\ast}$ is exact; so from the inclusion  $\mathcal{O}_Y\hookrightarrow
\mu_{\ast}(\mathcal{O}_T)$ we get the inclusion
 $f_{\ast}(\mathcal{O}_Y)\hookrightarrow
f_{\ast}(\mu_{\ast}(\mathcal{O}_T))\simeq
g_{\ast}(\mathcal{O}_T)$. Being semi-stable, $f_{\ast}(\mathcal{O}_Y)$ is a sub-object of $g_{\ast}(\mathcal{O}_T)$ and then an object of the tannakian category $EF(X,
\{ g_{\ast}(\mathcal{O}_T)\} ) $. Thus the inclusion is a fully faithful
functor of tannakian categories $$EF(X,
\{ f_{\ast}(\mathcal{O}_Y)\} )\hookrightarrow EF(X,
\{ g_{\ast}(\mathcal{O}_T)\} )$$ which induces a faithfully flat
morphism

$$H\twoheadrightarrow \pi _1 (X,\{ f_{\ast }({\mathcal O } _Y) \} , x)$$ 

between their
tannakian Galois group schemes. As $T$ is the universal torsor associated to the tannakian category  $EF(X, \{g_{\ast}(\mathcal{O}_T)\} )$, from this morphism one gets  $u:T\twoheadrightarrow \hat X_U$ commuting with the actions of $H$ and 
 $\pi _1 (X,\{ f_{\ast }({\mathcal O } _Y) \} , x)$. The same kind of arguments used in the second part of the proof shows that this morphism is also faithfully flat.
\medskip

{\it Proof of Corollary \ref{correspondance}}. 

If $f: Y \to X$ is an essentially finite morphism pointed at $ y \in Y_x (k)$, let $U=\{ f_{\ast }({\mathcal O } _Y) \} $ and $G = \pi _1 (X,U,x)$. 

Suppose that $f: Y \to X$ is dominated by the pointed torsor $g: T \to X$: there exists a faithfully flat morphism $\mu: T \to Y$ such that $f\circ \mu = g$ and $\mu (t)=y$. By Theorem \ref{universel}, there exists a unique morphism of torsors $(u, \varphi )$, where $ \varphi: H \to G$ is a morphism of groups and $u: T \to \hat X_U$ is a morphism of torsors making the following diagram commutative

$$\xymatrix{T\ar@/_/[ddr]_{p} \ar@/^/[drr]^{\mu} \ar@{-->}[dr]^{u} & &  \\  & \hat X_U \ar@{->>}[d]_{p}
\ar[r]^{\lambda}& Y\ar@{->>}[dl]^{f}\\ & X &   }$$

To these morphisms of torsors correspond morphisms of fundamental groups 

$$\xymatrix{\pi _1(X, x) \ar@{->>}[r] &H \ar@{->>}[r] &G\\}$$

One gets an action of $H$ on $Y_x$ and the stabilizer $H_y$ of $y$ under this action. 

Conversely if $H'<H$ is a subgroup of $H$,  the quotient $H/H'$ is endowed with an action of the fundamental group scheme $\pi _1 (X,x)$ through the morphism $\pi _1 (X,x) \to H$ attached to the torsor $p: T \to X$. To the $\pi _1 (X,x)$-$k$-scheme $H/H'$ corresponds a $X$-scheme $f: Y \to X$ such that $f_\ast ( \mathcal{O} _X)$ is essentially finite and $ Y_x \simeq H/H'$. Moreover $Y$ is pointed at $y \in Y_x (k)$ corresponding to the image of the unit element in $H/H'$ and $H'=H_y$.

These correspondences are inverse of each other as in the situation considered above $Y_x \simeq G/G_y \simeq H/H_y$ as $\pi _1 (X,x)$-$k$-schemes.

If the morphism $f: Y \to X$ is a torsor, then it is a quotient torsor under the group $G$. Then $G_y= 1$ and $H_y$ which is the inverse image of $G_y$ in the morphism $ H \to G$ is a normal subgroup of $H$. In this case $G \simeq H/H_y$ and thus $ f: Y \to X$ is a torsor under the quotient group $H/H_y$. 

Conversely if $H_y$ is normal in $H$, $G_y$ is normal in $G$, and as $Y_x \simeq G/G_y$ as representations of $ \pi _1 (X,x)$, the fundamental group $G = \pi _1 (X,U,x)$ which is the image of $\pi _1 ( X,x)$ in this representation is isomorphic to the group $G/G_y$. Thus $G_y=1$ and $f: Y \to X$ is a torsor under $G$.

\end{itemize}

\medskip

\begin{rem} In the situation of theorem \ref{universel} (with the only difference that we do not need the assumption $H^0 (X, f_\ast (\mathcal{O}_Y))=k$), $Y_x=Spec(V)$ where $V$ is naturally a representation of $\pi _1 (X,x)$ which factors through the morphism $\varphi: \pi _1 (X,x) \to G$ associated to the universal torsor $ (\hat X _U , \hat x_U)$. Then $Ker ( \varphi )$ is the kernel of the representation of $\pi _1 (X,x)$ on $V$.  

Let indeed $K$ be this kernel. The inclusion $ Ker(\varphi ) \subset K$ is obvious. In the other direction, $V$ is a representation of $\pi _1 (X,x) /K$, and as $V$ generates the tannakian category $Rep_k (G)$, one has the inclusion $$Rep_k (G) \hookrightarrow Rep_k ( \pi _1 (X,x) /K)$$ which induces a surjective morphism $\pi _1 (X,x) /K \to \pi _1 (X,x) /Ker ( \varphi ) =G$. Thus $K=Ker( \varphi )$ and $G \simeq \pi _1 (X,x) /K$.
\end{rem}

\subsection{Proof of Theorem \ref{teoDEDICATOaWEIL}}
\label{sez:proof}
Throughout this section $k$ will be any field. We first recall that for an integral and projective curve $C$ over $k$ and any coherent sheaf $\mathcal{F}$ over $C$ we define, respectively, the rank and the degree of $\mathcal{F}$ as follows: $$rk(\mathcal{F}):=dim_{k(\xi)}(\mathcal{F}_{\xi})\qquad deg(\mathcal{F}):=\chi(\mathcal{F})-rk(\mathcal{F})\cdot\chi(\mathcal{O}_C)$$ where $\xi$ is the generic point of $C$ and $\chi(\mathcal{F})$ is the Euler-Poincar\'e characteristic of $\mathcal{F}$. Assume moreover that $C$ is normal and let $0\rightarrow \mathcal{F}'\rightarrow \mathcal{F}\rightarrow \mathcal{F}''\rightarrow 0$ be an exact sequence of coherent sheaves over $C$ then clearly  $rk(\mathcal{F})=rk(\mathcal{F}')+rk(\mathcal{F}'')$ and consequently
$deg(\mathcal{F})=deg(\mathcal{F}')+deg(\mathcal{F}'')$. If $\mathcal{V}$ and $\mathcal{W}$ are locally free sheaves over an integral curve $C$ over $k$ then one can compute the degree of $\mathcal{V}\otimes_{\mathcal{O}_C} \mathcal{W}$ as follows (cf. \cite{TS}, Ch 6, \S 7, ex. 9):

$$deg(\mathcal{V}\otimes_{\mathcal{O}_C} \mathcal{W})=rk(\mathcal{V})deg(\mathcal{W})+rk(\mathcal{W})deg(\mathcal{V}).$$
If $\mathcal{F}$ is a coherent $\mathcal{O}_C$-sheaf then the formula still holds:

$$(\star ) \quad deg(\mathcal{F}\otimes_{\mathcal{O}_C} \mathcal{W})=rk(\mathcal{F})deg(\mathcal{W})+rk(\mathcal{W})deg(\mathcal{F}).$$

\noindent Indeed $\mathcal{F}$ has a locally free resolution of length $1$ (cf. \cite{Har}, III, Example 6.5.1, Proposition 6.11 A and Exercise 6.5). This implies that there exist two locally free $\mathcal{O}_C$-modules $\mathcal{L}_0$ and $\mathcal{L}_1$ and an exact sequence  $0\to \mathcal{L}_1\to \mathcal{L}_0\to \mathcal{F}\to 0$ that we tensor by $\mathcal{W}$ thus obtaining another exact sequence 
$$0\to \mathcal{L}_1\otimes_{\mathcal{O}_C} \mathcal{W}\to \mathcal{L}_0\otimes_{\mathcal{O}_C} \mathcal{W}\to \mathcal{F}\otimes_{\mathcal{O}_C} \mathcal{W}\to 0,$$ whence the formula,  by the additivity of the degree. We need the following lemma, whose easy proof is left to the reader (otherwise compare with \cite{HuLe}, Lemma 3.2.1):

\begin{lem}\label{lemme413}
Let $C$ and $C'$ be integral and projective curves over $k$ and $f:C'\to C$ a finite 
morphism of degree $d$. Let $\mathcal{F}$ be a coherent
$\mathcal{O}_{C'}$-module and $\mathcal{G}$ a coherent
$\mathcal{O}_{C}$-module
 then  $deg(f_{\ast}(\mathcal{F}))=deg(\mathcal{F})+deg(f_{\ast}(\mathcal{O}_{C'}))\cdot rk(\mathcal{F})$; if moreover $f$ is flat or $\mathcal G$ locally free then $deg(f^*(\mathcal{G}))=d\cdot deg(\mathcal{G})$. \end{lem}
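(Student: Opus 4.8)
The plan is to prove the two formulas separately, both times reducing everything to the additivity of $deg$ and $rk$ together with the behaviour of Euler characteristics under the finite morphism $f$. Recall first the two structural facts that make $f_\ast$ tractable: since $f$ is finite it is affine, so $f_\ast$ is exact and $R^q f_\ast = 0$ for $q>0$; consequently the Leray spectral sequence degenerates and $\chi(C',\mathcal{F}) = \chi(C, f_\ast\mathcal{F})$ for every coherent $\mathcal{F}$. The only genuine computation for the first formula is the rank of $f_\ast\mathcal{F}$. Writing $\eta$ for the generic point of $C$ and $\xi$ for that of $C'$, the morphism $f$ is dominant with function field extension $K(C')/K(C)$ of degree $d$, so the generic fibre $f^{-1}(\eta)$ is $\operatorname{Spec} K(C')$; hence $(f_\ast\mathcal{F})_\eta = \mathcal{F}_\xi$ regarded as a $K(C)$-vector space, and since $\mathcal{F}_\xi$ has $K(C')$-dimension $rk(\mathcal{F})$ this gives $rk(f_\ast\mathcal{F}) = d\cdot rk(\mathcal{F})$, in particular $rk(f_\ast\mathcal{O}_{C'}) = d$.

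With these in hand the first formula is a one-line manipulation. Substituting into $deg(\mathcal{H}) = \chi(\mathcal{H}) - rk(\mathcal{H})\,\chi(\mathcal{O})$ on each curve gives $deg(f_\ast\mathcal{F}) = \chi(\mathcal{F}) - d\cdot rk(\mathcal{F})\,\chi(\mathcal{O}_C)$ and $deg(f_\ast\mathcal{O}_{C'}) = \chi(\mathcal{O}_{C'}) - d\,\chi(\mathcal{O}_C)$, while $deg(\mathcal{F}) = \chi(\mathcal{F}) - rk(\mathcal{F})\,\chi(\mathcal{O}_{C'})$; eliminating $\chi(\mathcal{F})$ and $\chi(\mathcal{O}_{C'})$ yields exactly $deg(f_\ast\mathcal{F}) = deg(\mathcal{F}) + rk(\mathcal{F})\cdot deg(f_\ast\mathcal{O}_{C'})$. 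Note that no flatness is needed here, in agreement with the statement.

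For the second formula in the case $\mathcal{G}$ locally free I would avoid any new computation and instead feed the first formula back to itself via the projection formula $f_\ast(f^\ast\mathcal{G}) \simeq \mathcal{G}\otimes_{\mathcal{O}_C} f_\ast\mathcal{O}_{C'}$. Applying the first formula to $\mathcal{F} = f^\ast\mathcal{G}$ (using $rk(f^\ast\mathcal{G}) = rk(\mathcal{G})$, again a generic-point check) and expanding the right-hand side with the tensor formula $(\star)$, both sides share the term $rk(\mathcal{G})\cdot deg(f_\ast\mathcal{O}_{C'})$; cancelling it and inserting $rk(f_\ast\mathcal{O}_{C'}) = d$ leaves precisely $deg(f^\ast\mathcal{G}) = d\cdot deg(\mathcal{G})$.

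For the second formula in the case $f$ flat (with $\mathcal{G}$ arbitrary coherent), flatness makes $f^\ast$ exact, so the assignment $\mathcal{G}\mapsto deg(f^\ast\mathcal{G}) - d\cdot deg(\mathcal{G})$ is additive on short exact sequences and hence factors through the Grothendieck group of coherent sheaves on the integral curve $C$, which is generated by the class of $\mathcal{O}_C$ and by the classes of skyscraper sheaves $k(p)$ at closed points. For $\mathcal{O}_C$ the identity is trivial since $deg(\mathcal{O}_{C'}) = deg(\mathcal{O}_C) = 0$. For $k(p)$ I would use that a finite flat morphism of degree $d$ has $f_\ast\mathcal{O}_{C'}$ locally free of rank $d$, so the scheme-theoretic fibre $f^{-1}(p)$ has length $d$ over $k(p)$; since $f^\ast k(p) = \mathcal{O}_{f^{-1}(p)}$ this gives $deg(f^\ast k(p)) = \dim_k H^0(f^{-1}(p),\mathcal{O}) = d\,[k(p):k] = d\cdot deg(k(p))$. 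I expect this last case to be the only delicate point, precisely because the torsion part of $\mathcal{G}$ is invisible to the projection formula and forces the explicit input ``finite flat $\Rightarrow$ locally free of rank $d$''; everything else is bookkeeping with $\chi$ and $rk$.
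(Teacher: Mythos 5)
Your proposal is correct. Note that the paper gives no proof of this lemma at all: it is explicitly ``left to the reader'', with a pointer to Huybrechts--Lehn, Lemma 3.2.1, so there is no argument to compare against line by line; your write-up fills that gap. Your treatment of the first formula (exactness of $f_\ast$ for the affine morphism $f$, invariance of $\chi$, the generic-fibre computation $rk(f_\ast\mathcal{F})=d\cdot rk(\mathcal{F})$, then elimination of $\chi(\mathcal{F})$ and $\chi(\mathcal{O}_{C'})$) is exactly the standard Euler-characteristic bookkeeping that the cited reference carries out, and it correctly uses no flatness. For the second formula your two cases are handled by genuinely different mechanisms, both valid: the locally free case by feeding $\mathcal{F}=f^\ast\mathcal{G}$ back into the first formula via the projection formula $f_\ast f^\ast\mathcal{G}\simeq \mathcal{G}\otimes_{\mathcal{O}_C}f_\ast\mathcal{O}_{C'}$ and the paper's formula $(\star)$ (legitimate, since $(\star)$ is stated just before the lemma for a coherent sheaf tensored with a locally free one on an integral curve); the flat case by dévissage in the Grothendieck group, where the only nontrivial input is that finite flat implies $f_\ast\mathcal{O}_{C'}$ locally free of rank $d$, giving $deg(f^\ast k(p))=d\,[k(p):k]$ on skyscrapers. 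The one assertion you quote without proof --- that the Grothendieck group of coherent sheaves on the integral curve $C$ is generated by $[\mathcal{O}_C]$ and the classes $[k(p)]$ --- is indeed standard, but since $C$ is not assumed normal it deserves a one-line justification: split off the torsion subsheaf, and compare a torsion-free sheaf of rank $n$ with $\mathcal{O}_C^{\oplus n}$ through their intersection inside the constant sheaf $K(C)^n$, both inclusions having torsion cokernel. With that remark added, your proof is complete and self-contained.
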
 
 

\begin{lem}\label{lemmeEspoir}
Let  $C$ be a normal and proper curve over $k$, $C'$ a curve over $k$ and $f':C'\to C$ a finite and flat morphism. Denote by $d=rk_{\mathcal{O}_{C}}(f'_{\ast}(\mathcal{O}_{C'}))$ the degree of $f'$ . Let $\mathcal{A}$ and $\mathcal{B}$ be, respectively, a locally free and a coherent sheaf over $C'$. Let us denote the coherent $\mathcal{O}_C$-module $f'_{\ast}(\mathcal{A}\otimes_{\mathcal{O}_{C'}}\mathcal{B})$ by $\mathcal{F}$. Then $$deg(\mathcal{F})= d^{-1}\biggl(rk(f'_{\ast}(\mathcal{A}))deg(f'_{\ast}(\mathcal{B}))+rk(f'_{\ast}(\mathcal{B}))deg(f'_{\ast}(\mathcal{A}))-deg(f'_{\ast}(\mathcal{O}_{C'}))rk(\mathcal{\mathcal{F}})\biggr)$$\end{lem}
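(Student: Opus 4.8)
The plan is to compute $\deg(\mathcal{F})$ by playing off two applications of the degree formulas from Lemma \ref{lemme413} and the tensor-product formula $(\star)$, exploiting that $f'$ is finite and flat of degree $d$. The essential idea is that $\mathcal{F} = f'_\ast(\mathcal{A}\otimes_{\mathcal{O}_{C'}}\mathcal{B})$ can be attacked either by pushing forward the tensor product directly, or by relating it to $f'_\ast(\mathcal{A})\otimes_{\mathcal{O}_C} f'_\ast(\mathcal{B})$, which is not equal to $\mathcal{F}$ but whose degree is computable by $(\star)$. The quantity $d^{-1}$ appearing in the statement is the tell-tale sign that one should compare the degree of the pushforward of a tensor product with the tensor product of the pushforwards, since a pullback-then-pushforward of $f'_\ast(\mathcal{O}_{C'})$ contributes a factor of $d$.

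First I would apply Lemma \ref{lemme413} to the sheaf $\mathcal{A}\otimes_{\mathcal{O}_{C'}}\mathcal{B}$ on $C'$, obtaining
$$\deg(\mathcal{F}) = \deg(\mathcal{A}\otimes_{\mathcal{O}_{C'}}\mathcal{B}) + \deg(f'_\ast(\mathcal{O}_{C'}))\cdot rk(\mathcal{A}\otimes_{\mathcal{O}_{C'}}\mathcal{B}).$$
On $C'$ the tensor-product formula $(\star)$ (with $\mathcal{A}$ locally free playing the role of $\mathcal{W}$) gives $\deg(\mathcal{A}\otimes\mathcal{B}) = rk(\mathcal{B})\deg(\mathcal{A}) + rk(\mathcal{A})\deg(\mathcal{B})$ and $rk(\mathcal{A}\otimes\mathcal{B})=rk(\mathcal{A})\,rk(\mathcal{B})$. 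The remaining task is to re-express everything through the pushed-forward sheaves $f'_\ast(\mathcal{A})$, $f'_\ast(\mathcal{B})$, $f'_\ast(\mathcal{O}_{C'})$. For this I again use Lemma \ref{lemme413}: $\deg(f'_\ast(\mathcal{A})) = \deg(\mathcal{A}) + \deg(f'_\ast(\mathcal{O}_{C'}))\,rk(\mathcal{A})$, and the analogous identity for $\mathcal{B}$, so that $\deg(\mathcal{A})$ and $\deg(\mathcal{B})$ can be substituted out. I would also record the rank relations $rk(f'_\ast(\mathcal{A})) = d\cdot rk(\mathcal{A})$ and $rk(f'_\ast(\mathcal{B})) = d\cdot rk(\mathcal{B})$ (a pushforward along a degree-$d$ finite morphism multiplies generic rank by $d$), together with $rk(f'_\ast(\mathcal{O}_{C'}))=d$ and $rk(\mathcal{F})=d\cdot rk(\mathcal{A})\,rk(\mathcal{B})$.

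The main obstacle is purely bookkeeping: after substitution one must verify that the combination collapses to exactly the claimed symmetric expression $d^{-1}\bigl(rk(f'_\ast(\mathcal{A}))\deg(f'_\ast(\mathcal{B})) + rk(f'_\ast(\mathcal{B}))\deg(f'_\ast(\mathcal{A})) - \deg(f'_\ast(\mathcal{O}_{C'}))\,rk(\mathcal{F})\bigr)$. I expect the strategy to be to expand the target right-hand side using the substitution formulas above and check it agrees with the left-hand side I computed first; the $-\deg(f'_\ast(\mathcal{O}_{C'}))\,rk(\mathcal{F})$ term is precisely what corrects for the double-counting of the $\deg(f'_\ast(\mathcal{O}_{C'}))$ contribution that arises once in each of $f'_\ast(\mathcal{A})$ and $f'_\ast(\mathcal{B})$. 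The only genuine subtlety is that $\mathcal{B}$ is merely coherent (not locally free), so the rank-degree additivity and the tensor formula must be invoked in the form $(\star)$ valid for a coherent sheaf tensored with a locally free one, which is why the hypothesis that $\mathcal{A}$ is locally free is used; on the normal curve $C'$ the length-one locally free resolution justifying $(\star)$ applies equally, so no difficulty arises there. I would finish by dividing through by $d$ and matching coefficients.
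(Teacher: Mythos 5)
Your bookkeeping is internally consistent --- if every quantity you write down were defined, the substitutions would indeed collapse to the stated formula --- but the entire computation takes place on $C'$, and that is exactly where it fails. The lemma assumes only that $C'$ is \emph{a curve over $k$}: it is not normal (you write ``on the normal curve $C'$'', but normality is a hypothesis on $C$, not on $C'$), not integral, and not even reduced. This generality is not an accident: in the application (lemma \ref{lemmaSperanza}) the lemma is invoked for $C'=Y\times_X C$, a fiber product which is in general reducible and non-reduced, and this is precisely why the statement is formulated so that every rank and degree appearing in it is taken on $C$. On a non-integral $C'$ the quantities $rk(\mathcal{A})$, $deg(\mathcal{A})$, $deg(\mathcal{B})$, $deg(\mathcal{A}\otimes_{\mathcal{O}_{C'}}\mathcal{B})$ that your argument runs through are not even defined (the definitions at the beginning of section \ref{sez:proof} use \emph{the} generic point and are given only for integral projective curves). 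Consequently your first step --- applying lemma \ref{lemme413}, which is stated for integral projective curves on both sides, to the sheaf $\mathcal{A}\otimes_{\mathcal{O}_{C'}}\mathcal{B}$ on $C'$ --- is not licensed; neither are the formula $(\star)$ on $C'$ (whose justification via length-one locally free resolutions needs regularity, not just ``curve''), nor the rank relations $rk(f'_{\ast}(\mathcal{A}))=d\cdot rk(\mathcal{A})$ and $rk(\mathcal{F})=d\cdot rk(\mathcal{A})rk(\mathcal{B})$.

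The paper's proof avoids $C'$ altogether. It sets $\mathcal{M}=\mathcal{F}\otimes_{\mathcal{O}_{C}}f'_{\ast}(\mathcal{O}_{C'})$ and computes $deg(\mathcal{M})$ twice, both times on the normal curve $C$ where $(\star)$ is available: once directly, giving $deg(\mathcal{M})=d\cdot deg(\mathcal{F})+deg(f'_{\ast}(\mathcal{O}_{C'}))rk(\mathcal{F})$, and once after establishing the isomorphism $\mathcal{M}\simeq f'_{\ast}(\mathcal{A})\otimes_{\mathcal{O}_{C}}f'_{\ast}(\mathcal{B})$ --- proved by two applications of the projection formula together with $f'^{\ast}f'_{\ast}\mathcal{G}\simeq\mathcal{G}\otimes_{\mathcal{O}_{C'}}f'^{\ast}f'_{\ast}(\mathcal{O}_{C'})$, valid because $f'$ is affine --- which gives $deg(\mathcal{M})=rk(f'_{\ast}(\mathcal{B}))deg(f'_{\ast}(\mathcal{A}))+rk(f'_{\ast}(\mathcal{A}))deg(f'_{\ast}(\mathcal{B}))$, using that $f'_{\ast}(\mathcal{A})$ is locally free since $f'$ is finite flat and $\mathcal{A}$ is locally free (this, not any resolution on $C'$, is where the local freeness of $\mathcal{A}$ is really used). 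Equating the two expressions yields the formula. To salvage your route you would have to either impose integrality (in fact regularity) on $C'$, which destroys the intended application, or restructure the argument so that no degree on $C'$ ever appears --- which is exactly what the projection-formula trick accomplishes.
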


\proof  We let $\mathcal{M}$ denote $\mathcal{F}\otimes_{\mathcal{O}_{C}}f'_{\ast}(\mathcal{O}_{C'})$ and we compute its degree: $$deg(\mathcal{M})=deg(\mathcal{F})rk(f'_{\ast}(\mathcal{O}_{C'}))+deg(f'_{\ast}(\mathcal{O}_{C'}))rk(\mathcal{F}).$$
We will prove below the formula $\mathcal{M}\simeq  f'_{\ast}(\mathcal{A})
\otimes_{\mathcal{O}_{C}} f'_{\ast}(\mathcal{B})$, thus, as $f'$ is flat, $ f'_{\ast}(\mathcal{A})$ is locally free and the formula $ (\star ) $ gives  $$deg(\mathcal{M})=rk(f'_{\ast}(\mathcal{B}))deg(f'_{\ast}(\mathcal{A}))+rk(f'_{\ast}(\mathcal{A}))deg(f'_{\ast}(\mathcal{B}))$$ which is enough to conclude. It only remains to prove the isomorphism $\mathcal{M}\simeq  f'_{\ast}(\mathcal{A})
\otimes_{\mathcal{O}_{C}} f'_{\ast}(\mathcal{B})$ as $\mathcal{O}_C$-sheaves:  we have the following isomorphisms
$$f'_\ast (\mathcal{B}) \otimes_{\mathcal{O}_C} f'_\ast (\mathcal{A})\simeq f'_\ast (\mathcal{B} \otimes_{\mathcal{O}_{C'}}f'^\ast f'_\ast (\mathcal{A}))\simeq f'_\ast (\mathcal{B} \otimes_{\mathcal{O}_{C'}}\mathcal{A}\otimes_{\mathcal{O}_{C'}}f'^\ast f'_\ast (\mathcal{O}_{C'}))\simeq$$ $$\simeq f'_\ast (\mathcal{B} \otimes_{\mathcal{O}_{C'}}\mathcal{A})\otimes_{\mathcal{O}_{C}} f'_\ast (\mathcal{O}_{C'})$$ 
where the first and third isomorphisms hold by the projection formula (cf. \cite{Har}, II, Exercise 5.1) and the second is a consequence of the isomorphism $$ f'^*f'_* \mathcal{G}\simeq \mathcal{G}\otimes_{\mathcal{O}_{C'}}f'^*f'_*(\mathcal{O}_{C'}),$$ which holds for any coherent $\mathcal{O}_{C'}$-module $\mathcal{G}$ since $f'$ is affine.
\endproof

\begin{defi} Let $\mathcal{F}$ be a vector bundle over a $k$-scheme $T$ such that\\ $deg(i^{\ast}(\mathcal{F}))=0$ for any proper and normal $k$-curve $D$ and any non constant morphism $i:D\to T$: we will say that $\mathcal{F}$ has restricted degree $0$.\end{defi}

\begin{lem}\label{lemmaSperanza} Let  $X$ and $Y$ be two $k$-schemes and let $f:Y\to X$ be a finite and flat morphism such that $f_{\ast}(\mathcal{O}_Y)$ has  restricted degree $0$. Let $\mathcal{F}$ be a vector bundle over $Y$ which has restricted degree $0$ . Then the vector bundle $f_{\ast}(\mathcal{F})$ has restricted degree $0$.
\end{lem}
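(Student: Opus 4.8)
The plan is to test $f_\ast(\mathcal F)$ against an arbitrary non-constant morphism $i\colon D\to X$ from a proper normal $k$-curve $D$, which we may assume integral (and is then projective), and to prove $deg(i^\ast f_\ast(\mathcal F))=0$. First I would base change: form the cartesian square with $D':=D\times_X Y$, the projection $f'\colon D'\to D$ (finite and flat of degree $d=rk(f'_\ast\mathcal O_{D'})$) and $g\colon D'\to Y$. Since $f$ is finite and flat, $i^\ast f_\ast(\mathcal H)\simeq f'_\ast g^\ast(\mathcal H)$ for every coherent $\mathcal H$ on $Y$; applied to $\mathcal H=\mathcal O_Y$ and to $\mathcal H=\mathcal F$ this turns the hypotheses into $deg(f'_\ast\mathcal O_{D'})=deg(i^\ast f_\ast\mathcal O_Y)=0$ (as $f_\ast\mathcal O_Y$ has restricted degree $0$ and $i$ is non-constant) and reduces the goal to $deg(f'_\ast\mathcal G)=0$, where $\mathcal G:=g^\ast\mathcal F$ is locally free of rank $r=rk(\mathcal F)$ on $D'$.

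Next I would convert degrees into Euler characteristics. As $f'$ is affine, $\chi_D(f'_\ast\mathcal H)=\chi_{D'}(\mathcal H)$, and since $\mathcal G$ is locally free of rank $r$ over the degree-$d$ algebra $f'_\ast\mathcal O_{D'}$ one has $rk_D(f'_\ast\mathcal G)=rd$. Writing $deg=\chi-rk\cdot\chi(\mathcal O_D)$ the rank terms cancel and yield
$$deg(f'_\ast\mathcal G)-r\,deg(f'_\ast\mathcal O_{D'})=\chi_{D'}(\mathcal G)-r\,\chi_{D'}(\mathcal O_{D'}).$$
The second term on the left is zero, so it remains to prove that the defect $\delta(\mathcal G):=\chi_{D'}(\mathcal G)-r\,\chi_{D'}(\mathcal O_{D'})$ vanishes.

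The main obstacle is that $D'$ is in general neither reduced (for instance when $f$ is inseparable), nor irreducible, nor normal, so one cannot quote the formulas of Lemma~\ref{lemme413} or $(\star)$ on $D'$ directly. I would remove the nilpotents first: filtering $\mathcal O_{D'}$ by the powers of its nilradical $N$ and tensoring with the flat sheaf $\mathcal G$ gives, by additivity of $\chi$, $\delta(\mathcal G)=\sum_a\bigl(\chi_{D'_{red}}(\mathcal G_0\otimes P_a)-r\,\chi_{D'_{red}}(P_a)\bigr)$, where $\mathcal G_0=\mathcal G|_{D'_{red}}$ is locally free of rank $r$ and $P_a=N^a/N^{a+1}$ are coherent on $D'_{red}$. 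Thus it suffices to establish, on the reduced curve $D'_{red}$ with integral components $D'_j$ and normalisations $\nu_j\colon\widetilde D_j\to D'_j$, the identity $\chi(\mathcal V\otimes\mathcal P)-r\,\chi(\mathcal P)=\sum_j\rho_j(\mathcal P)\,deg_{\widetilde D_j}(\nu_j^\ast\mathcal V)$ for every locally free $\mathcal V$ of rank $r$ and every coherent $\mathcal P$ of generic rank $\rho_j$ along $D'_j$. Both sides are additive in $\mathcal P$, so I would verify it on generators of the Grothendieck group: on skyscrapers both sides vanish (local freeness makes $\mathcal V\otimes(-)$ multiply lengths by $r$), and on a pushforward $(\nu_j)_\ast\mathcal L$ of a line bundle the projection formula together with the degree formula $(\star)$ on the integral projective curve $\widetilde D_j$ produces exactly $deg_{\widetilde D_j}(\nu_j^\ast\mathcal V)$.

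Applying this identity with $\mathcal V=\mathcal G_0$ and $\mathcal P=P_a$, each summand of $\delta(\mathcal G)$ is a multiple of $deg_{\widetilde D_j}(\nu_j^\ast\mathcal G_0)$. Now $\nu_j^\ast\mathcal G_0=(g\circ\nu_j)^\ast\mathcal F$, and the morphism $g\circ\nu_j\colon\widetilde D_j\to Y$ is non-constant: since $f'$ is flat no component of $D'$ is vertical, so $f'\circ\nu_j$ is surjective onto $D$, and $i$ being non-constant forces $f\circ g\circ\nu_j=i\circ f'\circ\nu_j$ to be non-constant. Hence the restricted-degree-$0$ hypothesis on $\mathcal F$ gives $deg_{\widetilde D_j}(\nu_j^\ast\mathcal G_0)=0$, so $\delta(\mathcal G)=0$ and therefore $deg(f'_\ast\mathcal G)=r\,deg(f'_\ast\mathcal O_{D'})=0$. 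As $i$ was arbitrary, $f_\ast(\mathcal F)$ has restricted degree $0$.
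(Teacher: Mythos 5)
Your proof is correct, but the core computation takes a genuinely different route from the paper's. Both arguments share the skeleton forced by the statement: pull back along a non-constant $i\colon D\to X$ to the possibly non-reduced, reducible curve $D'=Y\times_X D$, use the affine base-change isomorphism $i^\ast f_\ast\simeq f'_\ast g^\ast$, and feed the restricted-degree-$0$ hypothesis on $\mathcal{F}$ into the normalizations of components of $D'_{red}$, whose maps to $Y$ are non-constant. They diverge in how the degree is extracted. The paper never computes anything on $D'$ itself: it picks a \emph{single} component of $C'_{red}$ surjective over the base curve, forms $\mathcal{P}=f'_\ast s_\ast s^\ast j'^\ast(\mathcal{F})$, and evaluates $deg(\mathcal{P})$ twice on the normal base curve --- once via Lemma \ref{lemme413} applied to the composite finite morphism $\widetilde{C}\to C$, once via the projection-formula identity $s_\ast s^\ast(\cdot)\simeq(\cdot)\otimes s_\ast(\mathcal{O}_{\widetilde{C}})$ combined with Lemma \ref{lemmeEspoir} --- and compares the two expressions to isolate $deg(f'_\ast j'^\ast\mathcal{F})=0$. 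You instead convert degrees into Euler characteristics on $D'$ (legitimate, since $\chi$ is insensitive to finite pushforward), kill the nilpotents by filtering by powers of the nilradical, and prove a Riemann--Roch-type identity
$$\chi(\mathcal{V}\otimes\mathcal{P})-r\,\chi(\mathcal{P})=\sum_j\rho_j(\mathcal{P})\,deg_{\widetilde{D}_j}(\nu_j^\ast\mathcal{V})$$
on the reduced curve by additivity in the Grothendieck group; this forces you to handle \emph{all} components, and your flatness argument that each component of $D'$ dominates $D$ is correct (and is a point the paper sidesteps by simply choosing one surjective component). What the paper's route buys is brevity, since Lemma \ref{lemmeEspoir} was manufactured exactly for this purpose; what your route buys is independence from Lemma \ref{lemmeEspoir} (and from Lemma \ref{lemme413} except through the smooth-curve formula $(\star)$), together with a structural explanation of where the defect $\chi_{D'}(\mathcal{G})-r\,\chi_{D'}(\mathcal{O}_{D'})$ lives. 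The one step you should spell out in a full write-up is the d\'evissage showing that $K_0$ of a reduced projective curve is generated by finite-length sheaves and by pushforwards of line bundles from the normalizations of its components (via the finite-length kernel and cokernel of $\mathcal{P}\to\nu_\ast\nu^\ast\mathcal{P}$, and filtrations of vector bundles on smooth projective curves by line bundles); it is standard, but it is the load-bearing substitute for the paper's Lemma \ref{lemmeEspoir}.
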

\proof
We denote $rk(f_{\ast}(\mathcal{O}_Y))$ by $d$. Let $C$ be a normal and proper curve over $k$, $j:C\to X$ a non constant morphism and $C':=Y\times_X C$; we consider the following diagram:
$$\xymatrix{\widetilde{C}\ar[rd]^{s} & & \\ & C'\ar[r]^{j'}\ar[d]_{f'}& Y\ar[d]^{f} \\  & C\ar[r]_{j} & X}$$ where $\widetilde{C}$ is the normalization of an irreducible component (surjective over $C$) of the curve $C'_{red}$ obtained by $C'$ after reduction. We want to show that $deg(j^{\ast}f_{\ast}(\mathcal{F}))=0$. We know that
\begin{enumerate}
	\item $deg(s^{\ast}j'^{\ast}(\mathcal{F}))=0$ by assumption since $j' \circ s$ is not constant because $f'\circ s$ is surjective;
	\item $f'_{\ast}j'^{\ast}(\mathcal{F})\simeq j^{\ast}f_{\ast}(\mathcal{F})$;
	\item for any quasi coherent $\mathcal{O}_{C'}$-module $\mathcal{G}$ we have $s_{\ast}s^{\ast}(\mathcal{G})\simeq \mathcal{G}\otimes_{\mathcal{O}_{C'}}s_{\ast}(\mathcal{O}_{\widetilde{C}})$.
\end{enumerate}

\noindent We denote $f'_{\ast} s_{\ast}s^{\ast}j'^{\ast}(\mathcal{F})$ by $\mathcal{P}$. According to lemma \ref{lemme413} and point 1, we know that  $$deg(\mathcal{P})=deg(f'_{\ast} s_{\ast}(\mathcal{O}_{\widetilde{C}}))\cdot rk(\mathcal{F}).$$ Using point 3, we obtain $$\mathcal{P}=f'_{\ast} s_{\ast}s^{\ast}j'^{\ast}(\mathcal{F})\simeq 
f'_{\ast}(j'^{\ast}(\mathcal{F})\otimes_{\mathcal{O}_{C'}} s_{\ast}(\mathcal{O}_{\widetilde{C}})).$$ 
We observe that $deg(f'_{\ast}(\mathcal{O}_{C'}))=0$: indeed by point 2 we have $f'_{\ast}(\mathcal{O}_{C'})\simeq f'_{\ast}j'^{\ast}(\mathcal{O}_Y)\simeq j^{\ast}f_{\ast}(\mathcal{O}_Y)$  then $deg(f'_{\ast}(\mathcal{O}_{C'}))=0$ by assumption.
Hence, according to lemma \ref{lemmeEspoir}  $deg(\mathcal{P})=$ $$=rk(f'_{\ast}(\mathcal{O}_{C'}))^{-1}\biggl(rk(f'_{\ast}s_{\ast}(\mathcal{O}_{\widetilde{C}}))deg(f'_{\ast}j'^{\ast}(\mathcal{F}))+rk(f'_{\ast}j'^{\ast}(\mathcal{F}))deg(f'_{\ast}s_{\ast}(\mathcal{O}_{\widetilde{C}}))\biggr)=$$ 
$$=d^{-1}\biggl(rk(f'_{\ast}s_{\ast}(\mathcal{O}_{\widetilde{C}}))deg(f'_{\ast}j'^{\ast}(\mathcal{F}))\biggr)+d^{-1}\biggl( d\cdot rk(\mathcal{F}) \cdot deg(f'_{\ast}s_{\ast}(\mathcal{O}_{\widetilde{C}}))\biggr);$$
but we already know that $deg(\mathcal{P})=deg(f'_{\ast} s_{\ast}(\mathcal{O}_{\widetilde{C}}))\cdot rk(\mathcal{F})$ thus $deg(f'_{\ast}j'^{\ast}(\mathcal{F}))=0$ and by point 2 we finally obtain $deg(j^{\ast}f_{\ast}(\mathcal{F}))=0$.
\endproof

\medskip
{\it Proof of Theorem \ref{teoDEDICATOaWEIL}}.
We are given a proper reduced $k$-scheme $X$ such that $H^0(X,\mathcal{O}_X)=k$ and an essentially finite morphism $f:Y\to X$ endowed with a section $y\in Y_x(k)$. Let $g:Z\to Y$ be a torsor under a finite $k$-group scheme $G_2$ trivializing $\mathcal{F}$ (i.e. $g^{\ast}(\mathcal{F})\simeq \mathcal{O}_Z^{\oplus r}$ where $r=rk(\mathcal{F})$). First we need to observe that $\mathcal{F}$ has restricted degree $0$: so let $C$ be an integral, proper and normal $k$-curve, $j:C\to Y$ a non constant morphism and $C':=C\times_Y Z$. Let us consider the following diagram:
$$\xymatrix{\widetilde{C}\ar[rd]^{s} & & \\ & C'\ar[r]^{j'}\ar[d]^{g'}& Z\ar[d]^{g} \\  & C\ar[r]_{j} & Y}$$

\noindent where $\widetilde{C}$ is an irreducible component of the curve $C'_{red}$ obtained by $C'$ after reduction then $$deg(s^*g'^*j^*\mathcal{F})=deg(s^*j'^*g^*\mathcal{F})=deg(\mathcal{O}_{\widetilde{C}}^{\oplus r})=0$$ and by means of lemma \ref{lemme413} $deg(j^*(\mathcal{F}))=0$. If $f: Y\to X$ is a torsor under a finite group scheme, take $T_1=Y$. If not by theorem \ref{universel} there  exists a torsor $T_1$ over $X$ under a finite $k$-group scheme $G_1$ dominating $f$. Set $T_2:=T_1\times_Y Z$. It is a $G_2$-torsor over $T_1$. According to \cite{Gar}, Theorem 1 there exist two torsors under finite $k$-group schemes $u:T\to X$ and $r:T\to T_2$ such that the diagram   $$\xymatrix{T \ar[r]^{r}\ar@/_/[rdd]_{u} & T_2 \ar[r]^t \ar[d]& Z \ar[d]^{g}\\ & T_1 \ar[r] \ar[d] & Y \ar[ld]^{f}\\  & X &  }$$ commutes. Set $h:=t\circ r:T\to Z$; it is a faithfully flat morphism. Then $h^{\ast}g^{\ast}(\mathcal{F})\simeq \mathcal{O}_T^{\oplus r}$ thus $u_{\ast}h^{\ast}g^{\ast}(\mathcal{F})$ is a finite vector bundle because $u_{\ast}(\mathcal{O}_{T})$ is. Since $g\circ h$ is faithfully flat then $\mathcal{O}_Y\hookrightarrow (g\circ h)_{\ast}(\mathcal{O}_T)$ which implies 
$$\mathcal{F}\simeq \mathcal{F}\otimes_{\mathcal{O}_Y}\mathcal{O}_Y \hookrightarrow  \mathcal{F}\otimes_{\mathcal{O}_Y} (g\circ h)_{\ast}(\mathcal{O}_T)\simeq (g\circ h)_{\ast}(g\circ h)^{\ast}(\mathcal{F}).$$ This means  that $\mathcal{F}$ is a subbundle of $(g\circ h)_{\ast}(g\circ h)^{\ast}(\mathcal{F})$: indeed  the inclusion $\mathcal{O}_Y\hookrightarrow (g\circ h)_{\ast}(\mathcal{O}_T)$ states that $\mathcal{O}_{Y}$ is a subbundle of $(g\circ h)_{\ast}(\mathcal{O}_T)$ (i.e. the resulting quotient is a vector bundle) since $g\circ h$ is finite and faithfully flat\footnote{The property of being locally free is local for the flat topology. And locally for the flat topology, a finite and faithfully flat morphism $V \to U$ has a section (\cite{Milne}, proof of proposition 2.18, p. 17)}. The morphism $f$ being affine and flat we deduce, from last inclusion, that $f_{\ast}({\mathcal{F}})$ is a subbundle of  $f_{\ast}{(g\circ h)_{\ast}(g\circ h)^{\ast}(\mathcal{F})}=u_{\ast}h^{\ast}g^{\ast}({\mathcal{F}})\simeq (u_{\ast}{\mathcal{O}_{T}})^{\oplus r}$ but $(u_{\ast}{\mathcal{O}_{T}})^{\oplus r}$ is finite, then in particular essentially finite. According to \cite{Nor}, Proposition 3.7, we just need to verify that $f_{\ast}{\mathcal{F}}$ has restricted degree $0$ in order to prove it is essentially finite, but this is a consequence of lemma \ref{lemmaSperanza} since $\mathcal{F}$ has restricted degree $0$. 



\section{The short exact sequence of fundamental group schemes}

\medskip
\label{sez:quattro}
Nori showed in \cite{Nor}, Ch. II, that under the hypothesis of section \ref{sez:Intro}, the category of torsors under finite group schemes over $X$ pointed above $x$ is filtered, and that the fundamental group scheme $\pi _1 (X,x)$ is the projective limit of the groups occurring in these torsors. This led Nori to approach the construction of fundamental group scheme in a different manner: a $k$-scheme pointed at $x \in X(k)$ has a fundamental group scheme based at $x$ if there exists a {\it universal torsor pointed above $x$} that dominates every torsor pointed above $x$ under the action of a finite group scheme (cf. \cite{Nor2}, Chapter II, Definition 1). Then he proves that this is equivalent as saying that the category of torsors under finite group schemes over $X$ pointed above $x$ is filtered (cf. \cite{Nor2}, Chapter II, Poposition 1). This point of view has been generalized by Gasbarri in \cite{Gas}, \S 2 to schemes over Dedekind rings.

As a consequence of  section \ref{sez:tre}, we show here that if $X$ is a proper reduced scheme satisfying the condition $H^0 (X, \mathcal{O}_X)=k$ endowed with a rational point $x \in X(k)$ and $Y \to X$ is a quotient torsor under a finite group scheme $G$ pointed on $y \in Y(k)$ above $x$, then $Y$ has a fundamental group scheme. Moreover $ \pi _1 (X,x)$ and $\pi _1 (Y,y)$ fit in an short exact sequence. This result was obtained independently by Garuti \cite{Gar}, Theorem 3 by another method in the more general situation of relative schemes over Dedekind schemes.

\begin{thm}\label{suite-exacte}
Let $\theta : X \to Spec (k)$ be as before a locally noetherian proper reduced $k$-scheme endowed with a rational point $x \in X(k)$ such that $H^0 (X, \mathcal{O}_X)=k$ and $f : Y \to X$ a quotient torsor under a finite group scheme $G$, pointed on $y \in Y(k)$ over $x$, corresponding to a faithfully flat morphism $\varphi : \pi _1 (X,x) \to G$.

Then $Y$ has a fundamental group scheme based at $y$, $\pi _1 (Y,y)\simeq Ker ( \varphi )$ and we have the following short exact sequence of group schemes:

$$\xymatrix{ 1\ar[r]& \pi _1 (Y,y) \ar[r] & \pi _1 (X,x) \ar[r]^{\varphi }& G \ar[r] & 1\\}$$
\end{thm}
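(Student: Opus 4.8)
The plan is to exploit the tannakian machinery developed earlier, in particular Theorem~\ref{universel} together with Corollary~\ref{tour-de-torseurs} and the degree argument of Theorem~\ref{teoDEDICATOaWEIL}, in order to produce the universal torsor over $Y$ and then to read off the short exact sequence from the functorial correspondence of Theorem~\ref{thReferee}. The first step is to establish that $Y$ possesses a fundamental group scheme at all, i.e.\ that the category of pointed torsors under finite group schemes over $Y$ is filtered (equivalently, that a universal such torsor exists). Since we do not assume $Y$ to be of the form on which Nori's original construction applies, the key input here is Corollary~\ref{tour-de-torseurs}: given any pointed torsor $g : Z \to Y$ under a finite group scheme, the composite $f \circ g : Z \to X$ is essentially finite, so Theorem~\ref{universel} furnishes a Galois closure over $X$ dominating $Z$. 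This lets us transport the filteredness of the category of pointed torsors over $X$ (which holds by Nori's theory) down to $Y$, thereby producing $\pi_1(Y,y)$.

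Once $\pi_1(Y,y)$ exists, I would identify it with $\operatorname{Ker}(\varphi)$. The natural candidate for the exact sequence comes from Theorem~\ref{universel}(2) applied to the essentially finite morphism $f : Y \to X$ itself: there the universal torsor $\hat X_U \to X$ of the category $EF(X,\{f_\ast(\mathcal O_Y)\})$ carries the structure of a torsor over $Y$ under the stabilizer $G_y$. Because $f : Y \to X$ is a \emph{quotient} torsor under $G$, Corollary~\ref{cas-des-torseurs} identifies $\pi_1(X,U,x)$ with the image of $\varphi$, which is all of $G$, and the universal torsor of $EF(X,U)$ is $f : Y \to X$ itself, pointed at $y$. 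In this situation the stabilizer $G_y$ is trivial on the quotient side, and the relevant kernel controlling torsors over $Y$ is exactly $\operatorname{Ker}(\varphi)$. Concretely I would show that pointed torsors over $Y$ correspond, via the correspondence of Theorem~\ref{thReferee} and the domination/Galois correspondence of Corollary~\ref{correspondance}, to morphisms $\operatorname{Ker}(\varphi) \to H$, which pins down $\pi_1(Y,y) \simeq \operatorname{Ker}(\varphi)$.

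For the exactness of
$$\xymatrix{ 1\ar[r]& \pi_1(Y,y) \ar[r] & \pi_1(X,x) \ar[r]^{\varphi}& G \ar[r] & 1,}$$
surjectivity of $\varphi$ is immediate since $f$ is a quotient torsor (Remark~\ref{osservaTRIPLERIDOTTE} and Proposition~\ref{torseurs-quotients}). The inclusion $\pi_1(Y,y) \hookrightarrow \pi_1(X,x)$ and the identification of its image with $\operatorname{Ker}(\varphi)$ is the heart of the matter: I would construct the map on fundamental group schemes from the faithfully flat morphism $\lambda : \hat X_U \to Y$ of Theorem~\ref{universel}, which exhibits the universal torsor over $X$ as simultaneously a torsor over $Y$, and then check that the composite $\pi_1(Y,y) \to \pi_1(X,x) \xrightarrow{\varphi} G$ is trivial by pulling back to $Y$, where $f^\ast f_\ast(\mathcal O_Y)$ becomes trivial and the corresponding representations of $G$ become trivial representations of $\pi_1(Y,y)$.

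The step I expect to be the main obstacle is verifying that the sequence is exact \emph{in the middle}, i.e.\ that $\pi_1(Y,y)$ maps isomorphically onto $\operatorname{Ker}(\varphi)$ rather than merely into it. Injectivity of $\pi_1(Y,y) \to \pi_1(X,x)$ requires knowing that every pointed torsor over $Y$ is dominated by (the pullback to $Y$ of) a torsor over $X$, which is precisely what Corollary~\ref{tour-de-torseurs} and Theorem~\ref{universel} provide, but matching the two kernels on the nose demands careful bookkeeping: one must show that a representation of $\operatorname{Ker}(\varphi)$ obtained by restricting a representation of $\pi_1(X,x)$ extends to give a genuine torsor over $Y$, and that distinct such representations yield distinct torsors. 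I would handle this by comparing the essential images of the relevant fiber functors, using Proposition~\ref{representation-reguliere} and the invariants computation of Lemma~\ref{invariants} to translate the equality $\pi_1(Y,y) = \operatorname{Ker}(\varphi)$ into the statement that a vector bundle on $Y$ is essentially finite if and only if its pushforward, tensored appropriately, lies in $EF(X)$ — a statement now guaranteed by Theorem~\ref{teoDEDICATOaWEIL}.
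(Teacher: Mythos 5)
Your proposal is correct and is essentially the paper's own argument: Theorem \ref{universel} applied to the quotient torsor $f$ itself exhibits $\lambda:\hat X\to Y$ as a torsor under $Ker(\varphi)$, and Corollary \ref{tour-de-torseurs} shows that any pointed finite torsor $Z\to Y$ composed with $f$ is essentially finite over $X$, hence admits a Galois closure dominated by $\hat X$, so that $\lambda:\hat X\to Y$ dominates every pointed torsor over $Y$ and is therefore the universal torsor of $Y$ based at $y$. The ``main obstacle'' you anticipate in your last paragraph dissolves at exactly this point: once $\lambda$ is known to be a pro-$Ker(\varphi)$-torsor dominating all pointed finite torsors over $Y$, the identification $\pi_1(Y,y)\simeq Ker(\varphi)$ and the middle exactness hold by definition of the fundamental group scheme as the group of the universal torsor, so the additional bookkeeping you propose (Proposition \ref{representation-reguliere}, Lemma \ref{invariants}, and the essential-image comparison via Theorem \ref{teoDEDICATOaWEIL}) is not needed.
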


\medskip
\proof Let $f :Y \to X$ be as in the statement of the theorem. It follows from theorem \ref{universel} that there is a unique morphism of torsors $\lambda : \hat X \to Y$ with respect to the morphism $\varphi : \pi _1 (X,x) \to G$ such that $f \circ \lambda = p$ and $ \lambda (\hat x ) = y$. Moreover $\lambda : \hat X \to Y$ is a torsor under $Ker( \varphi )$.

On the other hand let $g : Z \to Y$ be a torsor under a finite group scheme, pointed by $z \in Z(k)$ above $y$. According to corollary \ref{tour-de-torseurs}, $h=f \circ g$ is an essentially finite morphism, and thus is dominated by its Galois closure $\hat h : \hat Z \to X$, which is a quotient torsor itself dominated by $ p : \hat X \to X$, where $p : \hat X \to X$ is the universal torsor of $X$ based at $x$. 

Thus $\lambda :  \hat X \to Y$ is the projective limit of the pointed torsors $g : Z \to Y$; it is the universal torsor of $Y$ based at $y$.
\endproof

\medskip

\begin{cor}
Let $\theta : X \to Spec (k)$ be as before a locally noetherian proper reduced connected $k$-scheme endowed with a rational point $x \in X(k)$ and $f : Y \to X$ an essentially finite morphism pointed at $y \in Y(k)$ above $x$. Assume $H^0 (Y,\mathcal {O}_Y)=k$. Then $Y$ has a fundamental group scheme based at $y$ and $\pi _1 (Y,y) \simeq \pi _1 (X,x) _y$ where $\pi _1 ( X,x) _y$ denotes the stabiliser of $y$ in the natural action of $\pi _1 (X,x) $ on $Y_x$.
\end{cor}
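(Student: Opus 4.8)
The plan is to reduce the statement to Theorem \ref{universel} combined with the argument already carried out for Theorem \ref{suite-exacte}, the one genuinely new input being Corollary \ref{compos-ess-fin-morph} in place of Corollary \ref{tour-de-torseurs}. First I would check that the hypotheses of Theorem \ref{universel} are met: since $f$ is finite, hence affine, one has $H^0(X,f_\ast(\mathcal{O}_Y))\simeq H^0(Y,\mathcal{O}_Y)=k$, so the theorem applies with $U=\{f_\ast(\mathcal{O}_Y)\}$ and $G=\pi_1(X,U,x)$. Note also that $Y$ is proper over $k$, being finite over the proper scheme $X$.

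Next I would exhibit the candidate universal torsor of $Y$. Theorem \ref{universel} furnishes the Galois closure $\lambda_U\colon \hat X_U\to Y$, a pointed torsor under the stabilizer $G_y$, together with the isomorphism $Y_x\simeq G/G_y$ of lemma \ref{action-transitive}; moreover the action of $\pi_1(X,x)$ on $Y_x$ factors through the faithfully flat projection $\pi_1(X,x)\twoheadrightarrow G$. Composing $\lambda_U$ with the canonical map $\hat X\to \hat X_U$ (a torsor under $\ker(\pi_1(X,x)\to G)$) produces a pointed morphism $\lambda\colon \hat X\to Y$ with $f\circ\lambda=p$ and $\lambda(\hat x)=y$. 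Reading this off over the point $x$, the induced map $\pi_1(X,x)\simeq\hat X_x\to Y_x\simeq\pi_1(X,x)/\pi_1(X,x)_y$ is exactly the quotient by the stabilizer $\pi_1(X,x)_y$, which is the preimage of $G_y$; hence $\lambda$ is a pro-finite torsor under $\pi_1(X,x)_y$.

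The core step is to show that $\lambda\colon \hat X\to Y$ dominates every pointed torsor $g\colon Z\to Y$ under a finite group scheme, which simultaneously proves that $Y$ has a fundamental group scheme and that $\lambda$ is its universal torsor. Given such a $g$, the composite $h=f\circ g\colon Z\to X$ is essentially finite by Corollary \ref{compos-ess-fin-morph}. Its Galois closure $\hat Z\to X$ is a quotient torsor under a finite group scheme, so it is dominated by the universal torsor $\hat X\to X$ of $X$; the resulting pointed $X$-morphism $\hat X\to \hat Z\to Z$ then factors $\lambda$ through $Z$ by the uniqueness clause of Theorem \ref{universel}. Thus $\lambda$ dominates $g$, and $\hat X\to Y$ is cofinal in, and the projective limit of, the pointed finite torsors over $Y$, exactly as in the proof of Theorem \ref{suite-exacte}. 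Consequently $\pi_1(Y,y)$ exists and is the group of $\lambda$, namely $\pi_1(X,x)_y$.

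The step demanding care is the invocation of Corollary \ref{compos-ess-fin-morph}: it requires $Y$ to be a reduced proper $k$-scheme with $H^0(Y,\mathcal{O}_Y)=k$, the cohomological condition and properness being immediate, so the essential finiteness of the composite $f\circ g$ — which is \emph{not} a torsor in general — is precisely where the full force of Theorem \ref{teoDEDICATOaWEIL} enters. Once that is granted, the identification of the group as the stabilizer $\pi_1(X,x)_y$ is formal, and it specializes to Theorem \ref{suite-exacte} when $f$ is itself a quotient torsor, since then $G_y=1$ and $\pi_1(X,x)_y=\ker\varphi$.
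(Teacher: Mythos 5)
Your plan---rerun the proof of theorem \ref{suite-exacte} verbatim, with corollary \ref{compos-ess-fin-morph} replacing corollary \ref{tour-de-torseurs}---is natural, and most of the scaffolding (verification of the hypotheses of theorem \ref{universel}, the identification of $\hat X \to Y$ as a pro-finite torsor under $\pi_1(X,x)_y$, the cofinality scheme) is sound. But the step you yourself flag as ``demanding care'' contains a genuine gap: corollary \ref{compos-ess-fin-morph}, equivalently the non-torsor branch of theorem \ref{teoDEDICATOaWEIL}, requires the \emph{middle} scheme $Y$ to be \emph{reduced}, and this is neither a hypothesis of the statement nor a consequence of the other hypotheses. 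A finite flat cover of a reduced scheme need not be reduced, and $H^0(Y,\mathcal{O}_Y)=k$ does not rescue this: for instance, over the cuspidal cubic $X$ in characteristic $p$ (reduced, proper, $H^0(X,\mathcal{O}_X)=k$, with rational points) there is a nontrivial pointed $\alpha_p$-torsor $Y\to X$, glued from $\mathcal{O}_{U_0}[s_0]/(s_0^p+t^p)$ and $\mathcal{O}_{U_1}[s_1]/(s_1^p)$ via $s_1=s_0+t$, whose total space is everywhere non-reduced (generically $Y_{k(X)}\simeq k(X)[u]/(u^p)$) and which nevertheless satisfies $H^0(Y,\mathcal{O}_Y)=k$. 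So an essentially finite $f:Y\to X$ satisfying all hypotheses of the corollary can have non-reduced $Y$, and your invocation of corollary \ref{compos-ess-fin-morph} for the composite $Z\to Y\to X$ is then not licensed. (In characteristic $0$ the issue vanishes, since $f$ is \'etale and $Y$ is automatically reduced, but that is the trivial case.)

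This obstruction is exactly why the paper's own proof takes a different route. Instead of composing the given torsor $f':Y'\to Y$ with $f$, it pulls $f'$ back along the Galois closure $g:\hat Y\to Y$ of $f$, obtaining a $G'$-torsor $Z=Y'\times_Y\hat Y\to \hat Y$; the composite $Z\to\hat Y\to X$ is then a tower of honest torsors, so corollary \ref{tour-de-torseurs} applies and needs only the reducedness of the base $X$. Theorem \ref{suite-exacte}, applied to the quotient torsor $\hat Y\to X$, then produces the dominations $\hat X\to Z\to Y'$, and Nori's lemma (\cite{Nor2}, Lemma 1, using $H^0(\hat X,\mathcal{O}_{\hat X})=k$) upgrades these $Y$-morphisms to morphisms of torsors relative to unique group homomorphisms $\pi_1(X,x)_y\to G'$---a point your argument also leaves implicit when passing from ``$\hat X$ dominates every pointed torsor'' to ``$\hat X\to Y$ is the universal torsor''. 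Repairing your proof thus amounts to inserting the detour through $\hat Y$, at which point it becomes the paper's proof.
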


\proof
Let $f' : Y' \to Y$ be a pointed $G'$-torsor on $Y$ under a finite group scheme $G'$. Denote $\hat f : \hat Y \to X$ the Galois closure of $f$ and consider the following diagram, where the upper square is cartesian:

$$\xymatrix{
Z= Y' \times _Y \hat Y  \ar[d]  \ar[r]^{\hskip 0,5cm g'} & Y' \ar[d] ^{f'} \\
\hat Y \ar[r]^g \ar[dr] _{\hat f} & Y\ar[d] ^f \\
&X\\
}$$

Then $Z \to \hat Y$ is a pointed $G'$-torsor, and according to theorem \ref{suite-exacte}, there are unique morphisms of pointed torsors $\lambda : \hat X  \to Z$ and $\mu : \hat X  \to \hat Y$ making the following diagram commutative:
$$\xymatrix{\hat X \ar[r] ^{\lambda \hskip 0,5cm}\ar[dr]_\mu  &
Z= Y' \times _Y \hat Y  \ar[d]  \ar[r]^{\hskip 0,5cm g'} & Y' \ar[d] ^{f'} \\
&\hat Y \ar[r]^g \ar[dr] _{\hat f} & Y\ar[d] ^f \\
&&X\\
}$$
where $ \hat X$ denotes the universal torsor on $X$ based at $x$.

According to theorem \ref{universel}, $g \circ \mu $ is a pointed torsor over $Y$ under the stabiliser $\pi _1 (X,x)_y$ of $y$ in the action of $\pi _1 (X,x)$ on $Y_x$. According to \cite{Nor2}, lemma 1, a $k$-scheme morphism between two torsors $P$ and $P'$ under affine group schemes $G$ and $G'$ on a $k$-scheme $Y$ is a morphism of torsors relative to some unique morphism of groups $G \to G'$ provided that $H^0 (P, \mathcal {O}_P)=k$. Apply this to $g' \circ \lambda $; there exists a unique morphism of groups $\varphi : \pi _1 (X,x) _y \to G'$ such that $g' \circ \lambda $ is a morphism of pointed torsors relative to $ \varphi $. This proves that $g \circ \mu : \hat X \to Y$ is an universal object in the category of pointed torsors under finite group schemes on $Y$. One concludes that $Y$ has a fundamental group scheme and that $\pi _1 (Y,y) \simeq \pi _1 (X,x) _y$. \endproof
\medskip\medskip
\section{An example}

\medskip
We will restrict ourselves to the case of a characteristic $0$ field $k$. With the hypothesis of section \ref{sez:Intro}, we have the classical short exact sequence of Grothendieck \'etale fundamental groups:

$$1 \to \pi _1 ^{\acute{e}t} (X_{Ê\bar k} , \bar x) \to \pi _1^{\acute{e}t} (X, \bar x) \to Gal ( \bar k /k) \to 1$$

where $\bar k$ is an algebraic closure of $k$ and $\bar x$ is the geometric point corresponding to $x$. The rational point $x$ gives rise to a section $s: Gal ( \bar k /k) \to  \pi _1^{\acute{e}t} (X, \bar x)$ and $ \pi _1^{\acute{e}t} (X, \bar x)$ is in this way the semi-direct product of the geometric fundamental group $\pi _1 ^{\acute{e}t} (X_{Ê\bar k} , \bar x)$ by the absolute Galois group of $k$. The section defines an action of $Gal ( \bar k /k)$ on $\pi _1 ^{\acute{e}t} (X_{Ê\bar k} , \bar x)$ and this group endowed with this action can be viewed as a pro-$k$-group scheme. This is the Nori's fundamental group of $X$ based at $x$. In the case of an algebraically closed field $k$ of characteristic $0$, it suffices to remark that finite $k$-group schemes are just finite abstract groups, and that torsors under such finite group schemes are just Galois \'etale coverings (cf. \cite{TS}, Corollary 6.7.20). The general case is explained in Appendix 1, theorem \ref{etale}.
\medskip

The aim of this section is to translate in terms of fundamental short exact sequences of \'etale fundamental groups the Galois closure constructed in section \ref{sez:tre}. Let $f: Y \to X$ be an essentially finite morphism and  let us assume, as usual, that $H^0 (Y,\mathcal{O}_Y)=k$, then $f$ is just  a finite \'etale morphism (Remark \ref{RemEtalEssential}) with $Y$  geometrically connected. If we enumerate the geometric fiber at $\bar x$ as $\{Ê1 , \dots ,d \} $, then the data of the degree $d$ \'etale covering $Y \to X$ is equivalent to a morphism $\psi: \pi _1^{\acute{e}t} (X,\bar x) \to S_d$. This morphism factors as $\psi = \theta \circ \Phi $, where $ \Phi:  \pi _1^{\acute{e}t} (X,\bar x) \to G$, $G$ is the image of $\psi $ and $\theta $ the inclusion $G \subset S_d$. The surjective morphism $\Phi$ corresponds to the Galois closure $Z \to X$ of $f: Y \to X$ in the sense of Galois theory. Its restriction to $\pi _1 ^{\acute{e}t} (X_{\bar k }, \bar x)$ factors through a surjective morphism $ \varphi: \pi _1 ^{\acute{e}t} (X_{\bar k}, \bar x) \to H$.  

We have the following commutative diagram where all lines and the first and third columns are exact:

$$\xymatrix{
1 \ar[r] & \pi_ 1^{\acute{e}t} ( X_{\bar k } , \bar x) \ar[d]^\varphi \ar[r] & \pi _1^{\acute{e}t}  (X , \bar x) \ar[d] ^\Phi \ar[r] & Gal( \bar k /k) \ar[d] \ar[r] & 1 \\
1 \ar[r]& H \ar[r]  \ar[d]& G \ar[r] \ar[d]^\theta & Gal( L/k) \ar[r]\ar[d] &1\\
&1&S_d&1&\\
}$$

where $L=H^0 (Z, \mathcal{O}_Z)$ is a finite Galois extension of $k$. 

The section $s$ attached to the rational point $x$ induces a section $s_L: Gal ( L/k) \to G$ of the second exact sequence 
$$\xymatrix{
1 \ar[r]& H \ar[r] & G \ar[r]  & Gal( L/k) \ar[r]&1\
}$$
and as we have seen, the first exact sequence with the section $s$ is equivalent to the data of the fundamental group scheme $\pi _1 (X,x)$ and the second exact sequence with the section $s_L$ is equivalent to the data of a $k$-group scheme $H_k$ such that $ H_k \times _k \bar k \simeq H$. The morphism $ (\varphi , \Phi )$ corresponds to a morphism $\pi _1 (X,x) \to H_k$, and thus according to theorem \ref{thReferee}, to a torsor $T \to X$ pointed above $x$. When $Y$ has a rational point $y$ in the fiber of $x$, this torsor is the Galois closure of the essentially finite morphism $ f : YÊ\to X$.
\medskip

It is an exercise to describe this Galois closure in terms of classical Galois theory and descent theory : geometrically the above diagram corresponds to the following diagram:

$$\xymatrix{
Z \ar[r] \ar[d] & Y \ar[d] ^f \\
X_L \ar[d] \ar[r] & X \ar[d] \\
Spec (L) \ar[r] & Spec (k)\\
}$$
where $Z$ is a Galois geometrically connected \'etale cover of $X_L$ of group $H$. The section $s_L$ induces descent data from $L$ to $k$ of the Galois cover $Z \to X_L$ in a cover $p: T \to X$ as well as of the constant group $H$ in a $k$-group scheme $H_k$ (the Hopf algebra of the constant group $H$ over $L$ is $L^H$, whereas the Hopf algebra of $H_k$ is the sub-algebra $(L^H)^{Gal (L/k)}$ of fixed elements under the natural action of $Gal (L/k)$). Moreover these descent data are compatible with the right multiplication of $H$ on itself identified with the fiber at $\bar x$ which commutes with the action of the fundamental group and thus induces a right action of $H_k$ on $T$.
\medskip

One can describe explicitly these actions in terms of the preceding diagrams. First the fiber of $Z \to X_L$ at $\bar x$ is identified to $H$ and the action of an element $\gamma $ of $\pi _1 (X_{\bar k} , \bar x)$ on this fiber is the left multiplication of $\varphi (\gamma ) \in H$ on $H$ through this identification, whereas the fiber of $Z \to X$ at $x$ is identified with $G$. Secondly the action of an element $\sigma \in Gal( \bar k /k)$ on the fibers at $x$ - that of $Z\to X$ identified with $G$ and that of $Y\to X$ identified with $\{Ê1 , \dots , d \} $ - is the conjugation by $\Phi \circ s ( \sigma )$ (resp. $\theta \circ \Phi \circ s ( \sigma )$).
\medskip

The \'etale cover $p: T \to X$, which is in general not Galois, corresponds to the morphism $\Psi: \pi _1 ( X , \bar x) \to S_H$ defined in the following way:
$$\forall \gamma \in \pi _1^{\acute{e}t} ( X_{\bar k}, \bar x), \forall \sigma \in Gal( \bar k /k), \forall h \in H \quad \Psi ( \gamma s(\sigma )).h= \varphi (\gamma ) \Phi (s( \sigma )) h \Phi (s( \sigma )) ^{-1}$$
which is an action clearly extending the action of $\pi _1 ^{\acute{e}t} ( X_{\bar k } , \bar x )$ on $H$ via $\varphi $. The \'etale cover $p: T \to X$ which is a torsor under $H_k$ is the Galois closure of $f: Y \to X$ defined in section \ref{sez:tre}. 
\medskip

With the tools introduced here one can check that $p: T \to X$ factors through $f: Y \to X$ if $Y$ has a $k$-rational point $y$. Indeed one can index the points of the geometric fiber at $x$ such that $y$ corresponds to $1$, which will be fixed under the action of $Gal ( \bar k/k)$, i.e.

$$\forall \sigma \in Gal ( \bar k /k) \quad (\theta \circ \Phi \circ s ( \sigma )).1=1$$

Let us define a map $H \to \{Ê1 , \dots , d \} $ by the formula

$$F: h \to (\theta  (h)).1$$

As $Y\to X$ is geometrically connected, the action of $H$ on $\{Ê1, \dots , d \} $ is transitive, and thus $F$ is a surjective map. It is compatible with the actions of $ \pi _1 ( X, \bar x)$ on $H$ and $\{Ê1, \dots , d \}Ê$, and thus defines an unique morphism of covers $g: T \to Y$ over $X$. This is indeed a consequence of the following computation:

$$F((\gamma s(\sigma )) .h)=F(\varphi ( \gamma ) \Phi ( s(\sigma )) h \Phi (s(\sigma ))^{-1})=$$$$=( \theta \circ \varphi ( \gamma )\hskip 0,2cm  \theta \circ \Phi (s( \sigma )) \hskip 0,2cm  \theta (h) \hskip 0,2cm \theta \circ \Phi (s( \sigma ) )^{-1}) .1=( \theta  \circ \varphi ( \gamma )\hskip 0,2cm  \theta \circ \Phi (s( \sigma ) )\hskip 0,2cm  \theta  (h) ).1$$

In the other hand

$$\gamma s(\sigma ) .F(h)= \gamma s(\sigma ). (\theta  (h).1)= \theta ( \Phi ( \gamma s( \sigma ) )\hskip 0,2cm h ).1= \theta (  \varphi (\gamma ) \hskip 0,2cm \Phi ( s( \sigma )) \hskip 0,2cm h ) .1=$$$$= (\theta  \circ \varphi ( \gamma )\hskip 0,2cm  \theta \circ \Phi (s( \sigma ))  \hskip 0,2cm \theta  (h)) .1$$

Finally remark that the torsor $p: T\to X$ is a Galois cover of $X$ if and only if the action of $Gal (\bar k /k)$ on $H$ is trivial or equivalently the Nori Galois group $H_k$ is the constant group $H$. In this case the the group $G$ is isomorphic to the direct product
$$G \simeq H \times Gal ( L/k)$$

\medskip

Consider the particular case where $f: Y \to X$ is itself a geometrically connected torsor under a finite group scheme $H_0$ and suppose that $Y(k) \not= \emptyset $. According to corollary \ref{cas-des-torseurs} the group-scheme $H_k$ defined above is isomorphic to $H_0$ and the morphism $g: T \to Y$ defined above is an isomorphism of torsors. We have the following cartesian diagram 

$$\xymatrix{
Z\simeq Y_L \ar[r] \ar[d] & Y \ar[d] ^f \\
X_L \ar[d] \ar[r] & X \ar[d] \\
Spec (L) \ar[r] & Spec (k)\\
}$$

Let us consider another model $f': Y' \to X$ over $k$ of the Galois cover $Z\simeq Y_L \to X_L$. It corresponds to another section $s'$ of the short exact sequence

$$\xymatrix{
1 \ar[r]& H\ar[r] & G \ar[r] & Gal(L/k) \ar[r] & 1\\
}$$

where $H= H_0 \times _{Spec (k)} Spec (L)$ is a constant group and $G= Gal ( Z/X)$. The group $H$ with the action of $Gal ( L/k)$ defined by $s'$ defines a $k$-group scheme $H'$ and $Y' \to X$ is a torsor under $H'$. 

If $Y'(k) \not= \emptyset $, the argument used for $Y \to X$ applies to $Y' \to X$, and one concludes that the torsor $Y' \to X$ is isomorphic to $T \to X$, and thus to $Y \to X$. We have shown the following statement:

\begin{prop}

Let $f: Y \to X$ be a geometrically connected torsor defined over $k$ under a $k$-group scheme $H_0$, $L/k$ the smallest Galois extension of $k$ such that $H=H_0 \times _{Spec (k) } Spec (L)$ is constant. Suppose that $Y_x (k) \not= \emptyset $. Then there is a one to one correspondence between classes of conjugation of sections of the short exact sequence
$$\xymatrix{
1 \ar[r]& H\ar[r] & G \ar[r] & Gal(L/k) \ar[r] & 1\\
}$$
and classes of isomorphism of $k$-models $Y' \to X$ of $Y_L \to X_L$. Both sets are parametrized by $H^1 ( k, H_0)$ pointed by the section $s$ attached to the rational point $x \in X(k)$. This section corresponds to the unique (up to $k$-isomorphim) model $Y \to X$ such that $Y_x (k ) \not= \emptyset $.
\end{prop}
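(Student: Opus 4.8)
The plan is to split the statement into two independent bijections, each with the pointed set $H^1(k,H_0)$, and then to match the distinguished elements. Throughout I write $G=\mathrm{Gal}(Z/X)$, recall $Z\simeq Y_L$, and keep in view the extension
$$1 \to H \to G \xrightarrow{\pi} \mathrm{Gal}(L/k) \to 1,$$
where $H=H_0\times_{\mathrm{Spec}(k)}\mathrm{Spec}(L)$ is the constant group. Since $Y$ is geometrically connected, $Z$ is connected, so $H$ is exactly the deck group $\Aut_{X_L}(Z)$ of the Galois cover $Z\to X_L$. The section $s$ attached to $x$ (produced from $s_L$ as in the discussion above) splits $\pi$, and I will use $s$ as the reference splitting in both bijections.

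First I would describe the $H$-conjugacy classes of sections of $\pi$ by non-abelian cohomology. Relative to the reference splitting $s$, any section has the form $s'(\sigma)=c(\sigma)\,s(\sigma)$ with $c\colon\mathrm{Gal}(L/k)\to H$, and $s'$ is a homomorphism precisely when $c$ is a $1$-cocycle for the action ${}^{\sigma}h=s(\sigma)\,h\,s(\sigma)^{-1}$; two sections are $H$-conjugate exactly when the cocycles are cohomologous. Hence conjugacy classes of sections are in bijection with $H^1(\mathrm{Gal}(L/k),H)$, the reference section $s$ mapping to the neutral class. As $H_0$ is finite étale (we are in characteristic $0$) and becomes constant over $L$, one has $H^1(\mathrm{Gal}(L/k),H)=H^1(k,H_0)$; this furnishes the first parametrization, pointed by $s$.

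Next I would identify these conjugacy classes of sections with isomorphism classes of $k$-models of $Z\to X_L$ by Galois descent. Giving such a model $Y'\to X$ amounts to giving a semilinear action of $\mathrm{Gal}(L/k)$ on $Z$ lifting the action on $X_L$; since $Z\to X$ is Galois with group $G$ and $\pi$ is the projection onto $\mathrm{Gal}(L/k)$, a compatible family of lifts is precisely a section of $\pi$, and two sections give $X$-isomorphic models if and only if they differ by conjugation by an element of $H=\Aut_{X_L}(Z)$. This is the descent datum already used before the statement to pass from $s'$ to the torsor $Y'$ under $H'$, the reference section $s$ recovering the given model $Y$ with $Y_L=Z$. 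Composing with the previous paragraph yields the bijection of assertion (1) and the common parametrization by $H^1(k,H_0)$ of assertion (2), with base point $s\leftrightarrow Y$.

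Finally, for assertion (3) I would compute the fibre over $x$. Because $Y_x(k)\neq\emptyset$, the fibre of $Z\to X_L$ over $x_L$ is the trivial torsor $Z_{x_L}\simeq H$. For the model $Y'$ attached to the cocycle $c$, the fibre $Y'_x$ is this set $H$ equipped with the twisted semilinear action $\sigma\cdot h=c(\sigma)\,{}^{\sigma}h$, and an element $h_0$ is $\mathrm{Gal}(L/k)$-invariant iff $c(\sigma)=h_0\,({}^{\sigma}h_0)^{-1}$ for all $\sigma$, i.e. iff $c$ is cohomologous to the trivial cocycle. Thus $Y'_x(k)\neq\emptyset$ if and only if $[c]=0$ in $H^1(k,H_0)$, so the unique model (up to $k$-isomorphism) with a rational point in the fibre of $x$ is the base-point model $Y$. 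The main obstacle I anticipate is the bookkeeping of base points: making certain that the three pointed sets (sections, models, and $H^1(k,H_0)$) are matched with one and the same distinguished element $s\leftrightarrow Y\leftrightarrow[0]$, and that the deck group of the connected Galois cover $Z\to X_L$ is genuinely $H$, so that the descent equivalence between models and conjugacy classes of sections is licit.
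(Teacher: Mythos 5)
Your proposal is sound in its overall structure, and for the key assertion about the distinguished element it takes a genuinely different route from the paper's. The paper's proof is precisely the discussion preceding the proposition: it sets up the correspondence between $k$-models and sections by Weil descent, exactly as you do, but for the uniqueness of the pointed model it invokes the Galois-closure machinery of Section 3 --- a geometrically connected pointed torsor is a quotient torsor (Corollary \ref{cas-des-torseurs}), hence isomorphic to its own Nori Galois closure, which is the canonically descended torsor $T \to X$ built from the section $s$ attached to $x$; so any model $Y'$ with a rational point over $x$ satisfies $Y' \simeq T \simeq Y$. You instead compute the fibre of the twisted model directly: $Y'_x$ is the twist of the trivial torsor by the cocycle $c$, and it has a $k$-point if and only if $[c]$ vanishes. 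Your computation is more elementary and self-contained (it never uses Theorem \ref{universel}), and it exhibits the parametrization concretely, since the cohomology class of the fibre $Y'_x$ is exactly $[c]$; the paper's route, by contrast, displays the coincidence of the tannakian Galois closure with the classical descent construction, which is the point of this section of the paper. You also write out the cocycle dictionary between $H$-conjugacy classes of sections and cohomology classes, which the paper only asserts.

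One step deserves correction, although it is an imprecision you share with the paper's own statement: the equality $H^1(\mathrm{Gal}(L/k), H) = H^1(k, H_0)$ that you assert is false in general. The inflation map $H^1(\mathrm{Gal}(L/k), H_0(L)) \to H^1(k, H_0)$ is injective, but its image consists only of the classes killed by restriction to $\mathrm{Gal}(\bar k/L)$, i.e. of the $H_0$-torsors over $k$ split by $L$; for instance, if $H_0$ is a nontrivial constant group, then $L=k$ and the left-hand side is trivial while the right-hand side generally is not. Both sets in the proposition are honestly in bijection with $H^1(\mathrm{Gal}(L/k), H_0(L))$, and the clause ``parametrized by $H^1(k,H_0)$'' should be read as ``parametrized by the subset of $H^1(k,H_0)$ of classes split by $L$''. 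Since your main bijections (sections $\leftrightarrow$ models $\leftrightarrow$ cocycle classes, pointed by $s \leftrightarrow Y \leftrightarrow$ the trivial class) all take place at the level of $\mathrm{Gal}(L/k)$-cohomology, this does not affect the substance of your argument, but the sentence purporting to justify the equality should be replaced by the injectivity statement.
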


\medskip
\section{Appendix 1}

\subsection{Gerbes, groupoids, and short exact sequences}\label{gerbes}

The general tannakian duality statement says that a tannakian category is equivalent to the category of representations of the gerbe of its fiber functors, or equivalently to the category of representations of some $k$-groupoid. The correspondence between gerbes over $k$ and $k$-groupoids acting transitively on a $k$-scheme is explained in \cite{Del}. Let us recall it briefly.

\medskip
\begin{defi}
A $k$-groupoid $G$ acting on a $k$-scheme $S$ is a $k$-scheme $G$ given with a $k$-morphism $(t,s) : G \to S \times _k S$ (target and source) and a product morphism $$m : G {\times _{{}^sS^t} } G \to G$$ over $S\times _k S$, a unit element morphism $e: S \to G$ over the diagonal $S \to S\times _k S$, and an inverse element morphism $i : G \to G$ over the morphism $S \times _k S \to S\times _k S$ which maps $(s_1,s_2)$ to $(s_2,s_1)$ ; these morphism must satisfy the commutativity of diagrams expressing associativity of $m$, and properties of unit element and of inverse elements.
\end{defi}

\medskip




If $(t,s) : G \to S \times _k S$ is a $k$-groupoid acting on $S$, and $f : T \to S$ is a $k$-morphism of $k$-schemes, the pull back $G_T$ of $G$ is a $k$-groupoid acting on $T$ :

$$\xymatrix{
G_T \ar[r] \ar[d] & G \ar[d] ^{s,t} \\
T \times _k T \ar[r] ^{f,f} & S\times _k S \\
}$$




\begin{defi}
The $k$-groupoid $ G \to S \times _k S $ acts {\it transitively} on $S$ if there is a {\it fpqc}-covering $ T\to S\times _k S $ such that $ G_T (T) \not= \emptyset $. 


\end{defi}
\medskip

\begin{defi}
A gerbe $\cal G$ over $S$ for the {\it fpqc}-topology is a stack over $S$ for the {\it fpqc}-topology such that

\begin{enumerate}

\item ${\cal G}$ is locally non-empty: there is a covering of $S$ by $(U_i)_i$ such that ${\cal G} (U_i) \not= \emptyset $

\item two objects are locally isomorphic: if $\xi $ and $\xi' $ are objects of ${\cal G} (T)$, where $T \to S$, there is a covering $(T_j) _j$ of $T$ such that, for all $j$, $\xi _{| T_j}\simeq \xi '_{| T_j}$

\end{enumerate}
\end{defi}
\medskip

Given a gerbe $\cal G$ over a field $k$ and a section $\omega $ over some $k$-scheme $X$, one defines the groupoid $\Gamma _{X, {Ê\cal G} , \omega }= \underline {Aut } ( \omega) \to X \times _k X$, representing the functor associating to any morphism $(b,a) : T \to X\times _k X $ the set $Isom _T^{Ê\cal G} ( a^\ast \omega , b^\ast \omega )$. If $u : Y \to X$ is a $k$-morphism, one has the following formula 

$$u^\ast \Gamma _{ X, {Ê\cal G} , \omega }= \Gamma _{ Y, {Ê\cal G} , u ^\ast \omega }$$

In the other direction, given a groupoid $ \Gamma \to X \times _k X$ acting transitively on the $k$-scheme $X$, one defines a fiber category ${\cal G} ^0 _{X, \Gamma }$ whose objects are $k$-morphisms $T \to X$ and where the morphisms from $a : T \to X$ to $b : T \to X$ are $(a,b)^\ast \Gamma $. Let ${\cal G } _{X, \Gamma }$ be the stack associated to ${\cal G} ^0 _{X, \Gamma }$. The fact that the groupoid acts transitively on the $k$-scheme $S$ means that any two objects of  ${\cal G } _{X, \Gamma }$ are locally isomorphic for the $fpqc$-topology, and thus ${\cal G } _{X, \Gamma }$ is a gerbe. 

Any morphism $ u : Y \to X$, where $Y$ is a non empty $k$-scheme induces an equivalence 

$${\cal G} _{Y, u ^\ast \Gamma } \simeq {\cal G }Ê_{X, \Gamma }$$

\medskip
One can define also similar notion for the \'etale topology in place of $fpqc$-topology. We are considering here the case of the \'etale topology on $Spec (k)$ and we are going to explain that a gerbe ${\cal G }$ over $Spec (k)_{\acute{e}t}$ (or equivalently a $k$-groupoid acting on $Spec (\bar k)$) is equivalent to a short exact sequence built from a section $ \omega  \in {\cal G } ( \bar k )$:

$$1 \to Aut_{\bar k} ^{\cal G} ( \omega ) \to \Pi \to Gal ( \bar k /k) \to 1$$

\noindent where $\bar k$ denotes the separable closure of $k$ and $\Pi $ is some profinite group we are defining as follows.

As explained above, from $\omega \in {\cal G} ( \bar k)$ one can define the groupoid $(s,t) : \Gamma = \underline{Aut} ( \omega ) \to Spec ( \bar k) \times _k Spec ( \bar k )$ acting transitively on $Spec ( \bar k)$. Call $ \Delta = Spec ( \bar k) \times _k Spec ( \bar k)$ the trivial groupoid acting on $Spec ( \bar k)$. And denote by $s : \Gamma _1 \to Spec ( \bar k)$ (resp. $pr_1 :  \Delta _1\to Spec ( \bar k )$) the schemes $\Gamma $ (resp. $ \Delta $) endowed with the map $s$ (resp. $ pr_1$) over $Spec ( \bar k)$.

The set $\Delta _1 ( \bar k)$ is canonically in bijection with $Gal ( \bar k /k)$ and an element $\gamma \in \Gamma _1 ( \bar k)$ which maps to $ \sigma \in Gal ( \bar k /k) \simeq \Delta _1 ( \bar k)$ is an element of $Isom _{Ê\bar k } ^{\cal G}( \omega , \sigma ^\ast \omega )$. There is a natural group structure on $\Gamma _1 ( \bar k)$ compatible with the map $ \Gamma _1 ( \bar k ) \to \Delta _1 ( \bar k)$ and the group structure on $ \Delta _1 ( \bar k ) \simeq Gal ( \bar k /k )$: it is defined in the following manner: if $\sigma , \tau \in Gal ( \bar k /k)$, $\gamma \in Isom _{Ê\bar k } ^{\cal G}( \omega , \sigma ^\ast \omega )$, $\delta \in Isom _{Ê\bar k } ^{\cal G}( \omega , \tau ^\ast \omega )$, then

$$\gamma \ast \delta = {}^\sigma \delta \gamma \in Isom _{Ê\bar k } ^{\cal G}( \omega , (\tau \sigma ) ^\ast \omega )$$

\noindent thus the map $ \Gamma _1 ( \bar k ) \to \Delta _1 ( \bar k)$ is a group homomorphism whose kernel is $Aut _{ \bar k } ^{\cal G}( \omega )$. Thus one gets the following short exact sequence 

$$(1) \quad 1 \to Aut _{Ê\bar k }^{\cal G} ( \omega )\to \Gamma _1 ( \bar k ) \to Gal ( \bar k /k ) \to 1 .$$

Let $k \subset K \subset \bar k$ be an algebraic extension of $k$; one can pull the short exact sequence $(1)$ by the morphism $Gal ( \bar k /K) \to Gal ( \bar k /k)$. A section $s$ of this exact sequence is the data for all $ \sigma \in Gal ( \bar k /K)$ of an isomorphism $ \varphi _\sigma : \omega \to \omega ^{\sigma }$ satisfying Weil cocycle conditions or equivalently descent data from $\bar k $ to $K$ for $\omega$. It is thus a section $\tilde s$ of the gerbe $\cal G$ on $Spec (K)$. 

Given two sections $s$ and $t$ of the short exact sequence $(1)$ on $Spec (K)$, the morphisms from $\tilde s$ to $\tilde t$ in ${\cal G } _L$, where $K\subset L\subset \bar k$ is a finite extension of $K$, are the automorphisms of $\omega$ which are compatible with the descent data, that is the elements $\gamma \in Aut_{\bar k} ^{\cal G}( \omega)$ satisfying $$ \forall \sigma \in Gal ( \bar k /L) \quad \gamma \star s( \sigma )= t(\sigma ) \star \gamma $$

\noindent which expresses the commutativity of the following diagrams

$$\xymatrix{
\bar x ^\star \ar[d]_{s( \sigma ) } \ar[r] ^\gamma & \bar x ^\star \ar[d] ^{t( \sigma ) }\\
({}^\sigma \bar x ) ^\star \ar[r]^{{}^\sigma \gamma}& ({}^\sigma \bar x ) ^\star \\
}$$
\medskip
We have defined a fully faithful functor $s \to \tilde s$ from the category of sections of the short exact sequence $ (1)$ to the gerbe $ \cal G$. This functor is essentially surjective: let $ \rho $ be a section of $ \cal G$ on an extension $K$ of $k$, $k\subset K \subset \bar k$, then $\rho _{\bar k}$ is isomorphic over $\bar k$ to the section $\omega$. And then $ \rho $ is defined from $ \omega $ through descent data. As we have seen above one can associate to these descent data a section of the short exact sequence $(1)$. We have proved the following statement:

\medskip

\begin{prop}\label{sections-foncteurs-fibres}
Any section $s$ of the short exact sequence $(1)$ on a finite extension $K$ of $k$ gives rise to descent data from $\bar k $ to $K$ for $ \omega $ and then to a section $ \tilde s$ of the gerbe $\cal G$ on $K$. The functor $ s \to \tilde s$ is an equivalence of gerbes between the gerbe of sections of the short exact sequence $(1)$ and the gerbe $ \cal G$.
\end{prop}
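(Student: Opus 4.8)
The functor $s \mapsto \tilde s$ has already been produced above by Galois descent, so the plan is to verify that it realizes an equivalence of gerbes, organized around three points: that the construction is well defined (descent is effective), that it is fully faithful, and that it is essentially surjective. The starting observation is the dictionary, implicit in the definition of the group law on $\Gamma_1(\bar k)$, between a section $s$ of the pulled-back sequence $(1)$ over $Gal(\bar k /K)$ and a family $(\varphi_\sigma : \omega \to \sigma^\ast \omega)_{\sigma \in Gal(\bar k /K)}$ of isomorphisms in ${\cal G}(\bar k)$. First I would check that the requirement that $s$ be a group homomorphism, unwound through the formula $\gamma \star \delta = {}^\sigma\delta\,\gamma$, is precisely the Weil cocycle condition on the $\varphi_\sigma$; thus a section is the same datum as a descent datum on $\omega$ from $\bar k$ to $K$.

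For full faithfulness I would compute, for two sections $s,t$ over $K$ and a finite extension $L$ of $K$, the morphisms $\tilde s \to \tilde t$ in ${\cal G}(L)$. Since both objects are $\omega$ equipped with descent data, such a morphism is an automorphism $\gamma \in Aut_{\bar k}^{\cal G}(\omega)$ compatible with the two descent data, that is, satisfying $\gamma \star s(\sigma) = t(\sigma) \star \gamma$ for all $\sigma \in Gal(\bar k /L)$, which is exactly the commutativity of the square displayed above. This is by definition a morphism of sections, so the functor induces a bijection on morphism sets.

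For essential surjectivity I would take an arbitrary section $\rho$ of ${\cal G}$ over a finite extension $K$. Because ${\cal G}$ is a gerbe its objects are locally isomorphic, and over the separably closed field $\bar k$ local isomorphism becomes genuine isomorphism, so $\rho_{\bar k} \simeq \omega$; transporting the canonical descent datum of $\rho$ across this isomorphism yields a descent datum on $\omega$, hence a section $s$ with $\tilde s \simeq \rho$. Having shown $s \mapsto \tilde s$ to be fully faithful, essentially surjective, and compatible with base change, I would conclude that it is an equivalence of the two gerbes.

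The delicate point is the effectivity of descent underlying the very definition of $\tilde s$: the cover $Spec(\bar k) \to Spec(K)$ is a pro-finite limit rather than a finite \'etale cover, so the stack axiom does not apply directly. I would resolve this by observing that a section $s$ (equivalently, the object $\omega$ together with its finitely many relevant structure isomorphisms) is already defined over some finite Galois subextension $L/K$, so that $s$ factors through $Gal(L/K)$; the descent datum then lives on the genuinely finite \'etale cover $Spec(L) \to Spec(K)$, where effectivity is guaranteed by the stack condition for the \'etale topology on $Spec(k)$. The remainder is routine bookkeeping of cocycles.
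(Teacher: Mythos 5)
Your proposal follows essentially the same route as the paper: identify sections of $(1)$ over $Gal(\bar k/K)$ with Weil descent data $(\varphi_\sigma)$ for $\omega$, read off full faithfulness from the compatibility condition $\gamma \star s(\sigma) = t(\sigma)\star\gamma$, and get essential surjectivity by trivializing any section $\rho$ of $\mathcal{G}$ over $\bar k$ and transporting its descent data onto $\omega$. Your closing remark on effectivity of descent along the profinite cover $Spec(\bar k)\to Spec(K)$ (reduction to a finite Galois subextension) fills in a point the paper leaves implicit, but it is a refinement of the same argument, not a different approach.
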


\medskip

When the gerbe $\cal G$ is {\bf neutral}, i.e. possesses a section over $Spec (k)$, choose such a section $\xi \in {Ê\cal G} ( Spec (k))$ and $\omega = \xi _{Ê\bar k}$. Then the descent data from $\bar k $ to $k$ defining $\xi$ from $\omega$ give rise to a section $s$ of the short exact sequence:
$$s ( \sigma ) : \xi _{Ê\bar k} \simeq {} ^\sigma \xi _{Ê\bar k} =\xi _{Ê\bar k} $$

and for $\lambda \in Aut_{\bar k}^{\cal G} (\xi _{Ê\bar k} )$, ${}^\sigma \lambda = s( \sigma ) \star \lambda \star s ( \sigma )^{-1}$. One gets the following statement: 

\begin{prop}\label{groupe} Suppose the gerbe $\cal G$ being {\bf neutral} and choose a section $\xi \in {Ê\cal G} ( Spec (k))$. The $k$-group $Aut ^{\cal G} ( \xi )$ is then defined by the abstract group $Aut_{\bar k}^{\cal G} ( \xi _{Ê\bar k})$ endowed with the action of $Gal ( \bar k/k)$ by conjugation through $s$.
\end{prop}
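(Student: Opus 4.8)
The plan is to read the statement as an instance of Galois descent for the (pro-)\'etale $k$-group scheme $\Aut^{\mathcal{G}}(\xi)$; the paragraph preceding the proposition has in fact already produced the relevant descent data, so the task is to organize the identifications rigorously.

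First I would recall that $\Aut^{\mathcal{G}}(\xi)$ is by definition the $k$-group scheme representing the functor $T\mapsto \Aut_T^{\mathcal{G}}(\xi_T)$. Since we work over $\mathrm{Spec}(k)$ for the \'etale topology, this group scheme is \'etale, hence by Galois descent it is determined by the pair consisting of its group of $\bar k$-points together with the continuous $\mathrm{Gal}(\bar k/k)$-action induced by the $k$-structure. Evaluating the functor at $\mathrm{Spec}(\bar k)$ and using $\xi_{\bar k}=\omega$ yields the identification of abstract groups $\Aut^{\mathcal{G}}(\xi)(\bar k)=\Aut_{\bar k}^{\mathcal{G}}(\xi_{\bar k})$. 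Thus the only remaining content is to compute the Galois action on this group.

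Next I would identify that action with conjugation through $s$. The key observation, already recorded above, is that the $k$-form $\xi$ of $\omega$ amounts to descent data from $\bar k$ to $k$, which by Proposition \ref{sections-foncteurs-fibres} is precisely the section $s$ of the short exact sequence $(1)$, with $s(\sigma):\xi_{\bar k}\xrightarrow{\sim}{}^\sigma\xi_{\bar k}$. By functoriality of the formation of automorphism groups, these isomorphisms transport the semilinear $\sigma$-twist of an element $\lambda\in\Aut_{\bar k}^{\mathcal{G}}(\xi_{\bar k})$ --- which a priori lies in $\Aut_{\bar k}^{\mathcal{G}}({}^\sigma\xi_{\bar k})$ --- back to an element of $\Aut_{\bar k}^{\mathcal{G}}(\xi_{\bar k})$, and the result is exactly ${}^\sigma\lambda=s(\sigma)\star\lambda\star s(\sigma)^{-1}$. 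Because $s$ is a splitting of $(1)$, hence a group homomorphism, the assignment $\sigma\mapsto(\lambda\mapsto s(\sigma)\star\lambda\star s(\sigma)^{-1})$ is a genuine action of $\mathrm{Gal}(\bar k/k)$ by group automorphisms; this conjugation therefore defines a well-posed Galois action, and the descent equivalence then reconstructs $\Aut^{\mathcal{G}}(\xi)$ as the stated $k$-group.

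The step I expect to be the main obstacle is the compatibility asserted in the previous paragraph: that the abstract descent-theoretic Galois action on the automorphism group of the $k$-object $\xi$ really agrees with conjugation by the descent isomorphisms $s(\sigma)$, with the composition taken in the order dictated by the twisted group law $\gamma\ast\delta={}^\sigma\delta\,\gamma$ on $\Gamma_1(\bar k)$ defined earlier. This will be a formal but delicate diagram chase, resting on the commutative square displayed just before Proposition \ref{sections-foncteurs-fibres}, which expresses precisely that $s(\sigma)$ intertwines $\lambda$ with its twist ${}^\sigma\lambda$. Once the orders of composition are matched, everything else is bookkeeping and the conclusion follows.
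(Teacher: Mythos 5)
Your proposal is correct and takes essentially the same route as the paper: the paper likewise extracts the section $s$ of the exact sequence $(1)$ from the descent data defining $\xi$ from $\omega=\xi_{\bar k}$, records the conjugation formula ${}^\sigma\lambda = s(\sigma)\star\lambda\star s(\sigma)^{-1}$ for $\lambda\in Aut_{\bar k}^{\cal G}(\xi_{\bar k})$, and obtains the proposition by Galois descent for group sheaves on $Spec(k)_{\acute{e}t}$, i.e.\ as a direct consequence of Proposition \ref{sections-foncteurs-fibres}. The compatibility you single out as the main obstacle is precisely what the commutative square displayed before Proposition \ref{sections-foncteurs-fibres} encodes (taking the two descent data there to be equal), so your argument matches the paper's.
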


\subsection{Nori's fundamental group and Grothendieck's fundamental group}
 
We assume here that $ch(k)=0$. Let $X$ be as usual a proper reduced and connected $k$-scheme with a rational point $x \in X( k)$. Denote by $ \bar x$ the geometric point corresponding to $x$ and by  $ k \hookrightarrow \bar k$ the embedding of fields. This geometric point $\bar x$ gives rise to a fibre functor $\bar x ^\ast $ from the Galois category of \'etale covering of $X$ to the category of sets (identified to the category of finite \'etale $\bar k $-schemes). On the other hand $x$ gives rise to a neutral fiber functor $\omega _x $ of the tannakian category $EF(X)$ to the category of finite $k$-vector spaces. 

In this situation, the arithmetic and geometric \'etale fundamental groups are well defined and fit in the fundamental short exact sequence 

$$(2) \quad 1 \to \pi _1 ^{\acute{e}t} (X_ {Ê\bar k} , \bar x) \to \pi _1^{\acute{e}t} (X, \bar x) \to Gal ( \bar k /k) \to 1.$$

\noindent The rational point $x \in X(k)$ defines a splitting of this short exact sequence, and thus an action of $Gal ( \bar k /k)$ over the abstract profinite group $\pi _1 ^{\acute{e}t} (X_ {Ê\bar k} , \bar x)$.

\begin{thm}\label{etale} The Nori's fundamental group scheme $Aut ^{\cal G} (\omega _x )= \pi _1 (X,x)$ is the $k$- scheme defined by $\pi _1 ^{\acute{e}t} (X_ {Ê\bar k} , \bar x)$ endowed with the action defined above.
\end{thm}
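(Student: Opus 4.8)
The plan is to identify Nori's fundamental group scheme $\pi_1(X,x) = \underline{Aut}^{\otimes}_k(\omega_x)$ with the $k$-group associated to the neutral gerbe of fiber functors on $EF(X)$, and then apply Proposition \ref{groupe} together with the comparison between torsors under finite group schemes over $X$ and \'etale coverings of $X$. Concretely, since $ch(k)=0$, every finite $k$-group scheme is \'etale, and by Remark \ref{RemEtalEssential} essentially finite morphisms with $H^0=k$ are finite \'etale. Thus the category $EF(X)$ in characteristic $0$ is ``the same'' as the representation category coming from the \'etale fundamental group; the content of the theorem is to make this identification compatible with the arithmetic structure, i.e. with the action of $Gal(\bar k/k)$.

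First I would set up the gerbe ${\cal G}$ of fiber functors of the tannakian category $EF(X)$ over $Spec(k)_{\acute et}$, with the distinguished section $\omega_x$ coming from the rational point $x \in X(k)$. Because $\omega_x$ is a neutral fiber functor, the gerbe is neutral, and Proposition \ref{groupe} tells us that the $k$-group $Aut^{\cal G}(\omega_x) = \pi_1(X,x)$ is given by the abstract group $Aut^{\cal G}_{\bar k}(\omega_{x,\bar k})$ equipped with the $Gal(\bar k/k)$-action by conjugation through the splitting $s$ attached to $x$. So the whole theorem reduces to two identifications: first, that the abstract group $Aut^{\cal G}_{\bar k}(\omega_{x,\bar k})$ is canonically $\pi_1^{\acute et}(X_{\bar k},\bar x)$; and second, that the conjugation action induced by $s$ agrees with the splitting of the fundamental short exact sequence $(2)$ determined by $x$.

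For the first identification I would argue over $\bar k$, where both sides classify the same data. Over an algebraically closed field of characteristic $0$, finite group schemes are finite constant groups, torsors under them are Galois \'etale coverings, and $EF(X_{\bar k})$ is the neutral tannakian category whose tannakian group is the pro\'etale completion $\pi_1^{\acute et}(X_{\bar k},\bar x)$; this is exactly the content invoked in Section $4$ via \cite{TS}, Corollary 6.7.20. Thus $Aut^{\cal G}_{\bar k}(\omega_{x,\bar k}) \simeq \pi_1^{\acute et}(X_{\bar k},\bar x)$. For the second identification I would trace through the definition of the group law on $\Gamma_1(\bar k)$ in Proposition \ref{sections-foncteurs-fibres} and the associated short exact sequence $(1)$: applied to ${\cal G}$ the sequence $(1)$ becomes precisely the fundamental sequence $(2)$, the subgroup $Aut^{\cal G}_{\bar k}(\omega_{x,\bar k})$ being the geometric fundamental group and the quotient being $Gal(\bar k/k)$. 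The section $s$ manufactured from the $k$-rational section $\xi = \omega_x$ then matches the topological splitting of $(2)$ coming from $x$, since both arise from the same descent data from $\bar k$ to $k$.

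The main obstacle, and the point requiring genuine care rather than bookkeeping, is the compatibility of the two short exact sequences $(1)$ and $(2)$, i.e. checking that the group structure on $\Gamma_1(\bar k)$ defined tannakian-theoretically via the twisted product $\gamma \ast \delta = {}^\sigma\delta\,\gamma$ coincides, under the identification of the geometric parts, with the profinite group structure on $\pi_1^{\acute et}(X,\bar x)$ and that the induced $Gal(\bar k/k)$-actions on the geometric fundamental group agree. This is where one must verify that a fiber functor on $EF(X_{\bar k})$, together with a Weil descent datum to $k$, is the same thing as a continuous action of the arithmetic \'etale fundamental group extending the geometric one; the splitting-compatibility then follows from functoriality of descent. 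Everything else is a formal consequence of Propositions \ref{sections-foncteurs-fibres} and \ref{groupe} together with the characteristic-$0$ dictionary between $EF$ and \'etale coverings.
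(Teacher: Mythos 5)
Your reduction is the same as the paper's: apply Proposition \ref{groupe} to the neutral gerbe of fiber functors of $EF(X)$, identify $Aut^{\cal G}_{\bar k}(\omega_{x,\bar k})$ with $\pi_1^{\acute{e}t}(X_{\bar k},\bar x)$ via the characteristic-$0$ dictionary, and then match the short exact sequence $(1)$ of section \ref{gerbes} with the fundamental exact sequence $(2)$. But the last step, which you yourself single out as ``the main obstacle,'' is precisely where your proposal stops being a proof: you assert that ``the sequence $(1)$ becomes precisely the fundamental sequence $(2)$'' and that the compatibility ``follows from functoriality of descent,'' without ever producing a map between the middle terms $\Gamma_1(\bar k)$ and $\pi_1^{\acute{e}t}(X,\bar x)$. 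That map is the actual content of the theorem, and it does not come for free from descent formalism.

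What is missing, concretely, is the paper's construction of the comparison homomorphism $\Phi : \Gamma_1(\bar k) \to \pi_1^{\acute{e}t}(X,\bar x)$. One first embeds the Galois category of finite \'etale coverings of $X$ into $EF(X)$ by $f \mapsto f_\ast \mathcal{O}_Y$ and identifies the restriction of $\omega_{x,\bar k}$ to this subcategory with $\bar x^\ast$; then an element $\gamma \in Isom^{\cal G}(\omega_{\bar x}, {}^\sigma \omega_{\bar x})$ restricts to $\tilde\gamma : \bar x^\ast \Rightarrow {}^\sigma\bar x^\ast$, and composing with the canonical base-change transformation $\alpha_\sigma : {}^\sigma\bar x^\ast \Rightarrow \bar x^\ast$ yields $\Phi(\gamma)=\alpha_\sigma \circ \tilde\gamma \in \pi_1^{\acute{e}t}(X,\bar x)$. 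One must then check (i) that $\Phi$ carries the twisted product $\gamma \ast \delta = {}^\sigma\delta\,\gamma$ to composition, which the paper does by an explicit diagram chase, and (ii) that $\Phi$ is compatible with the two projections to $Gal(\bar k/k)$. Point (ii) is genuinely not formal: the paper proves it by restricting $\gamma$ to the subcategory $\cal T$ of objects $\mathcal{O}_{X_K}$, with $K$ a finite \'etale $k$-algebra, observing that this restriction is trivial because the Nori fundamental group of $Spec(k)$ is trivial, and then computing that $\alpha_\sigma$ acts on $\bar x^\ast(X_K) \simeq Spec(\bar k^{S_K})$ by $\varphi \mapsto \sigma \circ \varphi$. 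Nothing in ``functoriality of descent'' substitutes for this computation; and your intermediate claim --- that a fiber functor on $EF(X_{\bar k})$ with a Weil descent datum to $k$ is the same thing as a continuous action of $\pi_1^{\acute{e}t}(X,\bar x)$ extending the geometric one --- is itself a reformulation of what is to be proved, not a tool for proving it. Once $\Phi$ is constructed and these two checks are done, the rest (the five-lemma conclusion and the matching of the splittings attached to $x$) is indeed formal, as you say; your identification of the geometric kernels over $\bar k$ is also correct and matches the paper.
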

\medskip

\proof In the gerbe $\cal G$ of fibre functors of the tannakian category $EF(X)$, for any fibre functor $\omega $, $Aut ^{\cal G} ( \omega ) = Aut ^\otimes ( \omega )$, where this notation represents the group of automorphisms of $\omega $ compatible with the tensor product. The fact that Nori's fundamental group $Aut ^{\cal G} (\omega _x )$ is the $k$-group defined by $\pi _1 ^{\acute{e}t} (X_ {Ê\bar k} , \bar x)$ endowed with the action defined above is a consequence of Proposition \ref{groupe} together with the following proposition (see also theorem 4.4. of \cite{EsnHai1}) \endproof

\begin{prop}

There are isomorphisms making the following diagram commutative:

$$\xymatrix{
1 \ar[r] & Aut^{\cal G} ( \omega _x) \ar[d] ^\simeq \ar[r] & \Gamma _1 ( \bar k ) \ar[r] \ar[d] ^\simeq & Gal ( \bar k /k ) \ar[r] \ar[d] ^= & 1 \\
1 \ar[r] & \pi _1 ^{\acute{e}t} ( X_ {Ê\bar k} , \bar x)  \ar[r] &\pi _1 ^{\acute{e}t} ( X_ {} , \bar x)\ar[r] & Gal ( \bar k /k ) \ar[r]  & 1 \\
}$$
\noindent where the first row is  the exact sequence (1) of section \ref{gerbes} and the second row the fundamental exact sequence (2).
\end{prop}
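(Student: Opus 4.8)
The plan is to reduce the tannakian picture to the classical theory of the \'etale fundamental group, exploiting that in characteristic $0$ essential finiteness coincides with being trivialised by a finite \'etale cover. First I would invoke Remark \ref{RemEtalEssential}: since $ch(k)=0$, every object of $EF(X_{\bar k})$ is a vector bundle trivialised by a finite \'etale Galois cover of $X_{\bar k}$, and conversely every such cover $g:Z\to X_{\bar k}$ contributes the essentially finite bundle $g_\ast(\mathcal{O}_Z)$. Passing to the limit over all finite Galois covers yields an equivalence of neutral tannakian categories $EF(X_{\bar k})\simeq Rep_{\bar k}(\pi_1^{\acute{e}t}(X_{\bar k},\bar x))$ (continuous finite dimensional representations), under which the fibre functor $\bar x^\ast=\omega_{x,\bar k}$ corresponds to the forgetful functor. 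Here I use the base change identification $EF(X)\otimes_k\bar k\simeq EF(X_{\bar k})$, which is available because $X$ is geometrically connected (the hypothesis $H^0(X,\mathcal{O}_X)=k$ together with the rational point $x$).

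From this the left vertical isomorphism is immediate. In characteristic $0$ the finite group schemes occurring in Nori's construction are \'etale, hence constant over $\bar k$, so the Nori fundamental group scheme of $X_{\bar k}$ at $\bar x$ is the pro-finite (pro-constant) group scheme associated with $\pi_1^{\acute{e}t}(X_{\bar k},\bar x)$; its group of $\bar k$-points, which is exactly $Aut_{\bar k}^{\cal G}(\omega_{x,\bar k})=Aut^\otimes(\bar x^\ast)$, is therefore $\pi_1^{\acute{e}t}(X_{\bar k},\bar x)$, the common kernel of the two exact sequences.

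For the middle map I would argue that the equivalence above is compatible with the $k$-structure, i.e. equivariant for the natural $Gal(\bar k/k)$-actions, and hence upgrades to an equivalence of gerbes over $Spec(k)_{\acute{e}t}$ between the gerbe $\cal G$ of fibre functors of $EF(X)$ and the Galois gerbe attached to the category of finite \'etale covers of $X$. Feeding the section $\omega_x$ into the section-to-sequence machine of Propositions \ref{groupe} and \ref{sections-foncteurs-fibres} produces the exact sequence $(1)$ with middle term $\Gamma_1(\bar k)$, while the same construction applied to the Galois gerbe of $X$ produces the homotopy exact sequence $(2)$ with middle term $\pi_1^{\acute{e}t}(X,\bar x)$; the gerbe equivalence thus gives a canonical isomorphism $\Gamma_1(\bar k)\simeq\pi_1^{\acute{e}t}(X,\bar x)$. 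Concretely an element of $\Gamma_1(\bar k)$ over $\sigma$ is a tensor isomorphism $\gamma:\omega_{x,\bar k}\xrightarrow{\sim}\sigma^\ast\omega_{x,\bar k}$, that is a compatible system of bijections of the geometric fibres over $\bar x$ twisting by $\sigma$, which is exactly an element of the arithmetic fundamental group lying over $\sigma$, and the twisted multiplication $\gamma\ast\delta={}^\sigma\delta\,\gamma$ of Section \ref{gerbes} matches the extension group law of $\pi_1^{\acute{e}t}(X,\bar x)$ over $Gal(\bar k/k)$. Commutativity of the right square is then immediate, since both horizontal projections record the underlying embedding $\sigma$ and the right vertical map is the identity, while commutativity of the left square follows from functoriality, the inclusion $Aut_{\bar k}^{\cal G}(\omega_{x,\bar k})\hookrightarrow\Gamma_1(\bar k)$ (the fibre over $\sigma=\mathrm{id}$) corresponding to $\pi_1^{\acute{e}t}(X_{\bar k},\bar x)\hookrightarrow\pi_1^{\acute{e}t}(X,\bar x)$.

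I expect the main obstacle to be the two compatibility checks underlying the middle isomorphism: first, that Nori's category base-changes correctly, $EF(X)\otimes_k\bar k\simeq EF(X_{\bar k})$, and that the resulting equivalence with $Rep_{\bar k}(\pi_1^{\acute{e}t}(X_{\bar k},\bar x))$ is genuinely $Gal(\bar k/k)$-equivariant, so that it descends to an equivalence of gerbes over $Spec(k)_{\acute{e}t}$; and second, that the abstract twisted group law defining $\Gamma_1(\bar k)$ in Section \ref{gerbes} is identified, through the descent data attached to $x$, with the extension structure of the arithmetic fundamental group. Once these are established the diagram commutes by construction, and the theorem \ref{etale} follows by combining this Proposition with Proposition \ref{groupe} as indicated.
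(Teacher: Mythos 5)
Your outline reproduces the paper's strategy in its broad lines: the kernel identification via characteristic-zero \'etaleness and Nori's base change (the paper, too, argues that $Aut^{\mathcal{G}}(\omega_x)$ is the Nori fundamental group of $X_{\bar k}$, hence the projective limit of the finite constant group schemes occurring in finite torsors over $X_{\bar k}$, i.e. $\pi_1^{\acute{e}t}(X_{\bar k},\bar x)$), and the middle map given on elements by composing a tensor isomorphism $\gamma:\omega_{x,\bar k}\to\sigma^\ast\omega_{x,\bar k}$ with the canonical Galois twist --- this is exactly the paper's $\Phi(\gamma)=\alpha_\sigma\circ\tilde\gamma$. But there is a genuine gap precisely at the point you declare ``immediate'': the commutativity of the right square. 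The two projections to $Gal(\bar k/k)$ are defined in entirely different ways. The groupoid projection $\Gamma_1(\bar k)\to\Delta_1(\bar k)\simeq Gal(\bar k/k)$ tautologically remembers the index $\sigma$, whereas $\pi_1^{\acute{e}t}(X,\bar x)\to Gal(\bar k/k)$ is defined by the action of an automorphism of $\bar x^\ast$ on the purely arithmetic coverings $X_K=X\times_{Spec(k)}Spec(K)\to X$, $K$ a finite \'etale $k$-algebra. To see that $\Phi(\gamma)$ lies over $\sigma$ one must prove that the ``linear part'' $\tilde\gamma$ acts trivially on these arithmetic objects, so that only the twist $\alpha_\sigma$ contributes. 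The paper proves exactly this: the restriction of $\gamma$ to the subcategory $\mathcal{T}$ of objects $\mathcal{O}_{X_K}$ is a tensor automorphism of the trivial fibre functor of $EF(Spec(k))$ extended to $\bar k$, hence trivial because the Nori fundamental group of $Spec(k)$ is trivial; and the restriction of $\alpha_\sigma$, computed through the identification $K\otimes_k\bar k\simeq\bar k^{S_K}$, is the map $\varphi\mapsto\sigma\circ\varphi$ on $S_K$. Without an argument of this kind nothing prevents, a priori, the arithmetic part of $\gamma$ from shifting the image in $Gal(\bar k/k)$ away from $\sigma$, and the right square would not be known to commute.

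The same deferral affects your gerbe-theoretic packaging: the assertions that the Tannakian equivalence is $Gal(\bar k/k)$-equivariant and that the section-to-sequence machine of the appendix, applied to ``the Galois gerbe of finite \'etale covers of $X$,'' outputs the homotopy exact sequence $(2)$ are essentially restatements of the proposition itself; to substantiate them you would have to carry out the same fibre-by-fibre computation on arithmetic coverings, plus the verification (done in the paper by an explicit diagram) that the twisted law $\gamma\ast\delta={}^\sigma\delta\,\gamma$ is sent to composition in $\pi_1^{\acute{e}t}(X,\bar x)$. So your plan identifies the right objects and correctly isolates the two compatibilities as the ``obstacles,'' but those compatibilities are the actual content of the proof, and the key one is mislabeled as immediate rather than proved.
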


\proof For all $\sigma \in Gal ( \bar k /k)$ and for any \'etale covering $h: Y \to X$,  we have a cartesian diagram

$$\xymatrix{
Y_{Ê\bar k} ={}^\sigma Y_{Ê\bar k} \ar[d]^{{}^\sigma h_{Ê\bar k} } \ar[r] ^{\beta _\sigma }& Y_{\bar k} \ar[d] ^{h_{Ê\bar k}} \\
X_{Ê\bar k}  \ar[r] ^{\alpha _\sigma } \ar[d] & X_{Ê\bar k}\ar[d]  \\
Spec ( \bar k) \ar[r] ^{Ê\tilde \sigma } & Spec ( \bar k)\\
}$$
which defines ${}^\sigma Y_{Ê\bar k}$ and ${}^\sigma h_{Ê\bar k} $.

The restrictions of $ \beta _\sigma $ to the fibers of $\bar x$ and ${}^\sigma \bar x $ induce maps between finite sets $$\beta _\sigma : ({}^\sigma \bar x )^\star ({}^\sigma Y) \to \bar x ^\star (Y)$$

They define a natural transformation that we still denote $\alpha _\sigma $ from ${}^\sigma \bar x^\star $ to $ \bar x ^\star $. To $ \gamma \in Isom ^{\cal G} (\omega _{\bar x} , {}^\sigma \omega _{\bar x})\subset \Gamma _1 ( \bar k)$ let us associate $\tilde \gamma : \bar x ^\ast \Rightarrow {}^\sigma \bar x ^\ast $ and define $\Phi ( \gamma ) = \alpha _\sigma \circ \tilde \gamma \in \pi _1 ( X, \bar x)$\footnote{The category of \'etale finite covering of $X$ can be identified to a subcategory of $EF(X)$ by the functor which sends a finite \'etale covering $f: Y \to X$ to $f_\ast \mathcal{O}_Y$. We are identifying the restriction to this subcategory of $\omega _{ x, \bar k}$ with $\bar x ^\ast $ }. The following commutative diagram proves that $\Phi $ is a group homomorphism:

$$\xymatrix{
({}Ê{}^{Ê\sigma \tau }\bar x )^\star ({}^{\sigma \tau }Y) \ar[r]^{Ê{}^\sigma \beta _{\tau }} & ({}^{Ê\sigma }\bar x )^\star ({}^{\sigma }Y)\ar[r] ^{\beta _\sigma }& ({}^{}\bar x )^\star ({}^{}Y)\\
& ({}^{Ê\sigma }\bar x )^\star ({}^{\sigma }Y)\ar[lu]^{{} ^\sigma {\tilde \delta }} \ar[u]^{{}^\sigma \Phi ( \delta )}\ar[r] ^{Ê\beta _\sigma } &( {}^{Ê }\bar x )^\star ({}^{ }Y)\ar[u] ^{\Phi ( \delta )}\\
&&( {}^{Ê }\bar x )^\star ({}^{}Y)\ar[lu]^{\tilde \gamma }\ar[u] ^{\Phi ( \gamma ) }\\
}$$
\medskip
To verify that $\Phi $ is an isomorphism, il suffices to check that the diagram
$$\xymatrix{
1 \ar[r] & Aut^{\cal G} ( \omega _x) \ar[d] ^{\Phi _{| Aut^{\cal G} ( \omega _x)}} \ar[r] & \Gamma _1 ( \bar k ) \ar[r] \ar[d] ^\Phi & Gal ( \bar k /k ) \ar[r] \ar[d] ^= & 1 \\
1 \ar[r] & \pi _1 ^{\acute{e}t} ( X_ {Ê\bar k} , \bar x)  \ar[r] &\pi _1 ^{\acute{e}t} ( X_ {} , \bar x)\ar[r] & Gal ( \bar k /k ) \ar[r]  & 1 \\
}$$
is commutative and that the two maps $\Phi _{|Aut ^{\cal G} ( (\omega _ x ) ) }: Aut^{\cal G} ( \omega _x) \to \pi _1 ( X_{Ê\bar k } , \bar x)$ and $\Delta _1 ( \bar k ) \to Gal (\bar k /k)$ are isomorphisms. It is obvious for the second one. As for the first one it suffices to notice that $Aut^{\cal G} ( \omega _x)$ is the Nori's fundamental group of $X_{Ê\bar k }$ based at $ \bar x$, which is known to be isomorphic to the projective limit of finite $\bar k $-group schemes occurring in finite torsors $Y\to X_{ \bar k}$, which is equivalent to finite \'etale Galois covering of $X_{Ê\bar k}$. Thus $Aut^{\cal G} ( \omega _x)$ is isomorphic to $\pi _1 ( X_{Ê\bar k } , \bar x)$.

To check that the diagram is commutative we only have to check that the right square is commutative. The morphism $ \pi _1^{\acute{e}t} ( X, \bar x) \to Gal ( \bar k /k)$ is associated in the Galois theory with the functor which sends any finite \'etale $k$-algebra $k \subset K$ to the purely arithmetic  covering $X \times _{ÊSpec ( k)}ÊSpec (K) \to X$.

Let $k \subset K$ be a finite \'etale $k$-algebra. The structural morphism $ \bar x ^\star (X_K) \to Spec ( \bar k)$ can be identified canonically to $Spec (K \otimes _k \bar k) \simeq Spec ( \bar k ^{S_K}) \to Spec ( \bar k)$, where $S_K$ is the set of $k$-embeddings of $K$ in $\bar k$, corresponding to the diagonal morphism $\bar k \hookrightarrow \bar k ^{S_K}$. In particular it does not depend on the $\bar k $-point $\bar x$.

Let $ \gamma $ be in $ Isom ^{\cal G} ( \omega _{Ê\bar x}   , \sigma  ^\ast \omega _{Ê\bar x}  )\subset \Gamma _1 ( \bar k)$ where $ \sigma \in Gal ( \bar k /k)$. When we restrict $\gamma $ to the full subcategory $\cal T $ of $EF(X)$ whose objects are $\mathcal{O}_{X_K}$, where $k \hookrightarrow K $ runs among finite \'etale $k$-algebras (or more generally finite $k$-vector spaces), we get a tensor automorphism of the trivial fibre functor extended to $ \bar k$ from the category $EF( Spec (k))$. It is easy to check that the Nori fundamental group of $Spec (k)$ is trivial, and thus, the restriction of $\gamma $ to $\cal T$ is trivial. 

On the other hand, when we restrict the natural transformation $ \alpha _\sigma $ to objects of the form $ X _K \to X$, where $K$ is a finite \'etale $k$-algebra, $\sigma $ induces $1_K \otimes \sigma : K \otimes _k \bar k \to K \otimes _k \bar k$, and modulo the isomorphism $ K \otimes _k \bar k \simeq \bar k ^{S_K}$, the isomorphism $\bar k ^{S_K} \to \bar k ^{S_K}$ given by the following formula:

$$(\star ) \quad ( \lambda _\varphi ) _{Ê\varphi \in S_K} \to (\sigma ( \lambda _{\sigma ^{-1}\varphi })) _{Ê\varphi \in S_K} $$

Finally, the restriction of $\Phi ( \gamma )= \alpha _\sigma \circ \tilde \gamma $ to objects of the form $ X_K \to X$ is given by the formula $ ( \star )$, which corresponds on the set $S_K$ of $ \bar k$ points of $ \bar k ^{S_K}$ to the map

$$\varphi \to \sigma \circ \varphi $$

We have checked that the image of $\Phi (\gamma )\in \pi _1 ( X , \bar x)$ in $Gal ( \bar k /k)$ is $ \sigma \in Gal ( \bar k /k)$ as expected. \endproof
\indent \textbf{Acknowledgements}. Thanks to Niels Borne whose remarks in our numerous discussions were essential in this work. Thanks also to Marco Garuti for his very useful comments. Finally we would like to thank the referee for his numerous suggestions which improved this article.
\bigskip 

\medskip

\scriptsize

\begin{flushright} Marco Antei\\ Laboratoire Paul Painlev\'e, U.F.R. de
Math\'ematiques\\Universit\'e des Sciences et des Technologies de
Lille 1\\ 59 655 Villeneuve d'Ascq\\E-mail: \texttt{antei@math.univ-lille1.fr}\\ \texttt{marco.antei@gmail.com}\\
\end{flushright}

\begin{flushright}Michel Emsalem\\ Laboratoire Paul Painlev\'e, U.F.R. de
Math\'ematiques\\Universit\'e des Sciences et des Technologies de
Lille 1\\ 59 655 Villeneuve d'Ascq\\E-mail: \texttt{emsalem@math.univ-lille1.fr}\\\end{flushright}


\begin{thebibliography}{99}





\bibitem{Antei} M.~Antei, \emph{Comparison Between the Fundamental Group Scheme of a Relative Scheme and that of
its Generic Fiber}, Journal de théorie des nombres de Bordeaux, Tome 22, no 3 (2010), p. 525-543.

\bibitem{Bor} N. Borne  \emph{Fibr\'es paraboliques et champ des racines }, IMRN, 13 (2007).


\bibitem{Del} P.~Deligne, \emph{Cat\'egories Tannakiennes}, in \emph{The Grothendieck Festschrift}, Vol II, Birkh\"{a}user, (1990),
111-195.

\bibitem{DelMil} P.~Deligne, J.S.~Milne, \emph{Tannakian
Categories}, in \emph{Hodge Cycles, Motives, and Shimura
Varieties}, Lectures Notes in Mathematics 900, Springer-Verlag,
(1982), 101-227.

\bibitem{DemGab}  M.~Demazure, P.~Gabriel, \emph{Groupes Alg\'ebriques}, North-Holland Publ. Co., Amsterdam, (1970).

\bibitem{EsnHai1} H. Esnault, Phung Ho Hai  \emph{The fundamental groupoid scheme and applications}, Annales de l'Institut Fourier, 58 (2008), 2381-2412.




\bibitem{Gar} M. A.~Garuti, \emph{On the ``Galois closure'' for Torsors.}, Proc. Amer. Math. Soc. 137, 3575-3583 (2009).

\bibitem{Gas} C.~Gasbarri, \emph{Heights Of Vector Bundles And The Fundamental Group Scheme Of A Curve}, Duke Mathematical Journal, Vol. 117, No. 2, (2003)  287-311.





\bibitem{Har} R.~Hartshorne, \emph{Algebraic Geometry}, Graduate
Texts in Mathematics, Springer, (1977).


\bibitem{HuLe} D.~Huybrechts, M.~Lehn, \emph{The Geometry of Moduli Spaces of
Sheaves}, Aspects of Mathematics E 31, Vieweg (1997).

\bibitem{Milne} J.S.~Milne, \emph{\'Etale Cohomology}, Princeton University Press, 33 (1980).

\bibitem{Nor} M.V.~Nori, \emph{On The Representations Of The Fundamental Group}, Compositio Matematica, Vol. 33,
Fasc. 1, (1976). p. 29-42.

\bibitem{Nor2} M.V.~Nori, \emph{The Fundamental Group-Scheme}, Proc. Indian Acad. Sci. (Math. Sci.), Vol. 91,
Number 2, (1982), p. 73-122.

\bibitem{Nor3} M.V.~Nori, \emph{The Fundamental Group Scheme of an Abelian Variety}, Math. Annalen, Vol. 263, (1983), p. 263-266.






\bibitem{Ray} M.~Raynaud., \emph{Passage au quotient par une relation d'\'equivalence plate}, Proceedings of a Conference on Local Fields, Springer-Verlag (1967), p. 78-85.

\bibitem{TS} T.~Szamuely, \emph{Galois Groups and Fundamental Groups},  Cambridge studies in advanced mathematics, no. 117, (2009).




\end{thebibliography}
\end{document}